\newcommand\sbullet[1][.5]{\mathbin{\vcenter{\hbox{\scalebox{#1}{$\bullet$}}}}}
\newtheorem{thm}{Theorem}[section]
\newtheorem{defn}[thm]{Definition}
\newtheorem{claim}[thm]{Claim}
\newtheorem{lem}[thm]{Lemma}
\newtheorem{prop}[thm]{Proposition}
\newtheorem{cor}[thm]{Corollary}
\newtheorem{question}[thm]{Question}
\theoremstyle{definition}
\newtheorem*{remark}{Remark}
\theoremstyle{definition}
\newtheorem{exam}[thm]{Example}
\newcommand{\ds}{\displaystyle}
\newcommand{\bd}{\partial}
\newcommand{\rank}{\operatorname{rank}}
\newcommand{\gr}{\operatorname{Gr}}
\newcommand{\Isom}{\operatorname{Isom}}
\newcommand{\RR}{{\mathbb R}}
\newcommand{\TT}{{\mathbb T}}
\newcommand{\ZZ}{{\mathbb Z}}
\newcommand{\NN}{{\mathbb N}}
\newcommand{\BB}{{\mathbb B}}
\newcommand{\vs}{\vspace{2mm}}
\newcommand{\too}{\longrightarrow}
\newcommand{\mapstoo}{\longmapsto}
\newcommand{\mc}{\mathcal}
\newcommand{\wt}{\widetilde}
\newcommand{\wh}{\widehat}
\newcommand{\im}{\operatorname{im}}
\newcommand{\length}{\operatorname{length}}
\newcommand{\id}{\operatorname{id}}
\newcommand{\codim}{\operatorname{codim}}
\newcommand{\Diff}{\operatorname{Diff}}
\newcommand{\Haus}{\operatorname{Haus}}
\newcommand{\Diffr}{\operatorname{Diff}^r}
\newcommand{\Diffone}{\operatorname{Diff}^1}
\newcommand{\vol}{\operatorname{vol}}
\newcommand{\norm}[1]{\left\lVert#1\right\rVert}
\newcommand{\Aone}{\operatorname{(A1)}}
\newcommand{\Atwo}{\operatorname{(A2)}}
\newcommand{\Atwostar}{\operatorname{(A2^*)}}
\newcommand{\supp}{\operatorname{supp}}
\newcommand{\nj}{\operatorname{NJ}}
\newcommand{\ucirc}[1]{\accentset{\circ}{#1}}
\title[An average intersection estimate for families of diffeomorphisms]{An average intersection estimate for families of diffeomorphisms}
\author[A.\ Kodat]{Axel Kodat}
\author[M.\ Shub]{Michael Shub}
\begin{document}

\begin{abstract}
We show that for any sufficiently rich compact family $\mc{H}$ of $C^1$ diffeomorphisms of a closed Riemannanian manifold $M$, the average geometric intersection number over $h \in \mathcal{H}$ between $h(V)$ and $W$, for $V, W$ any complementary dimensional submanifolds of $M$, is approximately (i.e. up to a uniform multiplicative error depending only on $\mathcal{H}$) the product of their volumes. We also give a construction showing that such families always exist. 
\end{abstract}

\maketitle

\section{Introduction}

Let $M$ be a closed Riemannian manifold of dimension $n$, and let $\Diffr(M)$ denote the space of $C^r$ diffeomorphisms of $M$ with the $C^r$ topology. For $0 \leq k \leq n$, let $\gr_k(TM)$ denote the $k$-Grassmannian bundle over $M$, i.e. the set of $k$-planes tangent to $M$. 

In this note we study conditions on compact $C^1$ families in $\Diffone(M)$ sufficient to guarantee a kind of ``uniform mixing" on average of the (smooth) incidence geometry of $M$. By a \emph{$C^r$ family of diffeomorphisms of $M$} we will mean a pair $(\mc{H}, \psi)$, where $\mc{H}$ is a smooth manifold, possibly with boundary, and $\psi : \mc{H} \to \Diffr(M)$ is such that the associated evaluation map $ev_\psi : \mc{H} \times M \to M$ is $C^r$. This implies in particular that the parametrizing map $\psi$ is continuous. When a family $(\mc{H}, \psi)$ is fixed we will usually suppress the map $\psi$, using the same letter $h$ to denote both an element in $\mathcal{H}$ and its image under $\psi$; by this mild abuse of notation we will often write, for instance, $ev_\psi(h,p) = \psi(h)(p) = h(p)$. 

We will also assume that any such $\mc{H}$ comes equipped with a Riemannian metric, and thus a Riemannian density which we denote by $d\mc{H}$. Given these assumptions, our main result is the following estimate: 

\begin{thm} \label{thm:main}

Suppose $(\mathcal{H}, \psi)$ is compact $C^1$ family of diffeomorphisms of $M$ such that
    \vspace{2mm}
\begin{enumerate}[label=$\operatorname{(A\arabic*)}$]
    \item $ev : \mathcal{H} \times M \to M \times M$ defined by $ev(h,p) = (p, h(p))$ is a submersion;
    
    \vspace{3mm}
    
    \item $\ucirc{ev}_k : \ucirc{\mathcal{H}} \times \gr_k(TM) \to \gr_k(TM) \times \gr_k(TM)$ defined by $ev_k(h, \sigma) = (\sigma, dh(\sigma))$ is surjective for all $0 \leq k \leq n$.

    \end{enumerate}

    \vspace{2mm}

\noindent Then there is a constant $C = C(\mathcal{H}, \psi) \geq 1$ such that
\begin{equation} \label{eqn:bounds}
\dfrac1C \vol(V)\vol(W) \ \ \leq \ \ \ds\int_{h \in \mathcal{H}} \#(h(V) \cap W) \ d\mathcal{H} \ \ \leq \ \ C \vol(V)\vol(W). \\
\end{equation}
for all complementary dimensional submanifolds $V, W \subset M$.
\end{thm}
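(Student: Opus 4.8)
The plan is to realize the average intersection number as an integral over $M \times M$ against a kernel built from the family, and then bound that kernel from above and below using assumptions (A1) and (A2).

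First I would set up the coarea/change-of-variables machinery. Fix complementary-dimensional submanifolds $V, W$ with $\dim V = n - k$, $\dim W = k$. For a generic $h$, $h(V) \cap W$ is a finite set and $\#(h(V) \cap W)$ equals the number of pairs $(p, q) \in V \times W$ with $h(p) = q$. The idea is to integrate the quantity $\#(h(V) \cap W)$ against $d\mathcal{H}$ by pulling back to the incidence variety $Z = \{(h,p) \in \mathcal{H} \times M : h(p) \in W\}$; because $ev(h,p) = (p, h(p))$ is a submersion by (A1), the preimage $ev^{-1}(V \times W) \subset \mathcal{H} \times M$ is a submanifold, and restricting the projection $\mathcal{H} \times M \to \mathcal{H}$ to it realizes $\int_{\mathcal{H}} \#(h(V)\cap W)\, d\mathcal{H}$ as the total volume of $ev^{-1}(V \times W)$ with respect to a density determined by the Jacobians of the two projections. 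Equivalently, applying the coarea formula to $ev : \mathcal{H} \times M \to M \times M$ shows that for any reasonable function one has
\begin{equation*}
\int_{\mathcal{H}} \#(h(V) \cap W)\, d\mathcal{H} \ = \ \int_{V \times W} \Phi(p,q)\, d(\vol_V \times \vol_W)(p,q),
\end{equation*}
where $\Phi(p,q) = \int_{ev^{-1}(p,q)} J^{-1}\, d(\text{fiber density})$ is a continuous, strictly positive function on $M \times M$ obtained by integrating the reciprocal of the normal Jacobian of $ev$ over the (compact, nonempty by (A1)) fiber. Continuity and positivity of $\Phi$ on the compact manifold $M \times M$ then give constants $0 < c_1 \leq \Phi \leq c_2$, and since $\int_{V\times W} 1 = \vol(V)\vol(W)$, this already yields the theorem \emph{with the same $k$-independent argument}, provided the coarea computation is valid for all submanifolds $V, W$ uniformly.

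The main obstacle is that the coarea formula as just sketched controls only the \emph{measure-theoretic} count — it is exact when $h(V) \pitchfork W$ for a.e. $h$, but $\#(h(V)\cap W)$ is an integer-valued geometric count, and a priori for a positive-measure set of $h$ the intersection could be non-transverse, infinite, or even have $h(V) \cap W$ of positive dimension, in which case $\#$ is infinite and the naive integrand is not the coarea integrand. This is exactly where assumption (A2) enters: it is a transversality hypothesis one dimension up, on the Grassmannian bundle. The strategy is a Thom–Abraham parametric transversality argument. Consider the map $\mathcal{H} \times V \to M \times M$, $(h,p) \mapsto (p, h(p))$; (A1) says it is a submersion, so for a.e. $h$ the map $p \mapsto h(p)$ is transverse to $W$, hence $h(V) \cap W$ is a finite set and $\#(h(V)\cap W)$ agrees with the coarea integrand for a.e. $h$ — but one must also control the \emph{non-generic} $h$. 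For that, one uses (A2) to show the map $ev_k$ restricted to the appropriate incidence subvariety over $\gr_k(TM) \times \gr_k(TM)$ is transverse to the "bad locus" (tangencies), so that the set of $h$ for which $h(V)$ is tangent to $W$ somewhere has measure zero \emph{and} the count stays locally bounded; combined with compactness of $\mathcal{H}$ and of $V, W$, this gives that $h \mapsto \#(h(V) \cap W)$ is bounded, measurable, and equal a.e. to the coarea integrand, so the integral identity above holds as stated.

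Concretely I would carry out the steps in this order: (1) prove the coarea identity $\int_{\mathcal{H}} (\text{measure-theoretic count})\, d\mathcal{H} = \int_{V\times W} \Phi$, with $\Phi$ continuous and positive, using (A1) and compactness; (2) extract the constant $C = \max(c_2, c_1^{-1})$ from the extreme values of $\Phi$ on the compact manifold $M \times M$; (3) use (A1) (Thom's parametric transversality theorem) to show that for a.e. $h \in \mathcal{H}$, $h(V) \pitchfork W$, so the geometric count equals the measure-theoretic count for a.e. $h$; (4) use (A2) — the Grassmannian-level surjectivity — together with compactness of $\mathcal{H}$, $V$, $W$ to bound $\#(h(V) \cap W)$ uniformly over all $h$, ruling out the measure-zero pathologies from contributing and upgrading the a.e. equality to an $L^1$ equality; (5) combine to conclude \eqref{eqn:bounds}. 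I expect step (4) — the uniform bound on the count across the whole family, including degenerate parameters — to be the crux, since it is exactly what forces the Grassmannian hypothesis (A2) rather than just (A1), and it is where the "sufficiently rich" nature of $\mathcal{H}$ does real work.
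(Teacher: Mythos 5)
The overall double-fibration-plus-coarea strategy you describe is indeed the paper's, but there are genuine gaps in the execution. The most serious is your claim that the kernel $\Phi$ is a continuous positive function of $(p,q) \in M \times M$ alone, obtained by integrating the reciprocal normal Jacobian of $ev$ over the fiber $ev^{-1}(p,q)$. The correct integrand --- the ratio $\nj(\pi_1)/\nj(\pi_2)$ of normal Jacobians of the two projections restricted to the intersection manifold $\{(h,p,q) : p \in V,\ q \in W,\ h(p)=q\}$ --- depends essentially on the tangent planes $T_pV$ and $T_qW$, not just on the base points; up to uniformly bounded factors it is $\sin \measuredangle(dh(T_pV), T_qW)$. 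A kernel depending only on $(p,q)$ would make the answer independent of the relative angles of $V$ and $W$, which is false: for two segments $I, J$ on the flat torus the average count is $|\sin\theta|\,\length(I)\length(J)$. So the compactness argument must be run on $\gr_k(TM) \times \gr_{n-k}(TM)$ rather than $M \times M$, and the positivity of the fiber integral there is precisely where (A2) does its work: one must produce some $h$ in the interior of the fiber over $(p,q)$ with $dh(T_pV)$ transverse to $T_qW$, since otherwise the integrand vanishes identically on that fiber and the lower bound fails. Relatedly, nonemptiness of the fiber $ev^{-1}(p,q)$ is not a consequence of (A1) (a submersion from a manifold with boundary need not be surjective); it is the $k=0$ case of (A2).

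Two further problems. First, the role of (A2) is inverted in your step (4): the upper bound and the a.e.-finiteness of $\#(h(V)\cap W)$ follow from (A1) alone, via Sard's theorem applied to $\pi_1$ and finiteness of $\int_{\mathcal V}\nj(\pi_1)$. There is no uniform bound on the count over all $h$ --- for non-transverse $h$ it can be infinite --- only an integrable one, and (A2) is used solely for the lower bound. Second, continuity of the fiber integral is not available in general: any family satisfying the hypotheses on a general $M$ must have $\partial\mathcal{H} \neq \varnothing$, Ehresmann's theorem then fails, and the fibers of $ev$ need not vary continuously, so one cannot simply take extreme values of a continuous function on a compact space. Instead one must bound the fiber integrals directly, by covering the fibers with finitely many submersion charts whose plaque volumes are bounded above and below and on which the integrand is bounded below where needed.
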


Assumption (A1) says equivalently that for all $p \in M$, the map $ev_{\psi, p} : \mc{H} \to M$, $h \mapsto \psi(h)(p)$ is a submersion, or (again equivalently) that for all $(h, p) \in \mc{H} \times M$,
$$
\partial_h(ev_\psi)_{(h,p)} : T_h \mc{H} \to T_{h(p)} M
$$ is surjective. Assumption (A2) says that for all $0 \leq k \leq n$ and $\sigma_1, \sigma_2 \in \gr_k(TM)$, there is an $h \in \ucirc{\mc{H}}$ such that $dh(\sigma_1) = \sigma_2$; in other words, the interior of $\mc{H}$ ``acts transitively" on $\gr_k(TM)$ for every $k$.

Here $\#(h(V) \cap W) \in \NN_0 \cup \{ \infty \}$ simply means the number of points in $h(V) \cap W$, i.e. the geometric intersection number between $h(V)$ and $W$. Note that (\ref{eqn:bounds}) implicitly includes the claim that the function
$$
\phantom{.} \hspace{2cm}  \mathcal{H} \ \ni \ h \ \ \xmapsto{ \ \ \ \ \ \ \ } \ \ \#(h(V) \cap W) \ \in \ \NN_{\geq 0} \cup \{ \infty \}
$$
is measurable, and integrable if both $V$ and $W$ have finite volume; in the latter case it implies that $\#(h(V) \cap W) < \infty$ for a.e. $h \in \mc{H}$. 

By the volume of a submanifold $S \subset M$ we will always mean its intrinsic volume as a Riemannian manifold with metric inherited from $M$, i.e. $\vol(S) = \int_S dS$, where $dS$ denotes the density induced by the restricted metric. Alternatively, if $\dim(S) = k$ we have $\vol(S) = \Haus_M^k(S)$, the $k$-dimensional Hausdorff measure of $S$ in $M$. 

Theorem \ref{thm:main} is actually a special case of a more general result, which applies whenever $\dim(V) + \dim(W) \geq \dim(M)$ and correspondingly replaces the cardinality of the intersection with its appropriate-dimensional Hausdorff measure. We focus on the complementary dimensional case for simplicity of exposition (and greater proximity to our original motivations), but the proof of the general case works similarly. For the precise statement and some remarks on its proof, see \S \ref{sec:gen}.

By analogy with the (exact) kinematic formulas of integral geometry (see e.g. \cite{chern}, \cite{calegari}, \cite{santalo}), we call (\ref{eqn:bounds}) a \emph{kinematic inequality} for the family $(\mc{H}, \psi)$. Taken alone, Theorem \ref{thm:main} is a relatively straightforward adaptation of these classical formulas---which obtain when $\mathcal{H} = G$ is a compact Lie group acting isometrically on $M$ and transitively on every $\gr_k(TM)$---to a specific (a priori somewhat artificial and potentially rare) non-homogeneous context. Its possible utility derives from the following existence result, which we prove in \S \ref{sec:construction}: 
\begin{thm} \label{thm:existence1}
    Every closed manifold $M$ admits an $\mc{H}$ and $\psi : \mc{H} \to \Diff^1(M)$ satisfying the hypotheses of Theorem \ref{thm:main}.  
\end{thm}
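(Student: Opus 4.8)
The plan is to build $\mathcal{H}$ by composing many small "local" perturbations, each supported in a chart, and arrange that collectively they satisfy (A1) and (A2). First I would cover $M$ by finitely many charts $U_1, \dots, U_N$ diffeomorphic to balls in $\RR^n$, with slightly smaller concentric balls $V_i \Subset U_i$ still covering $M$. On each $U_i$ I would construct a finite-dimensional family of compactly supported diffeomorphisms: concretely, for a bump function $\beta_i$ supported in $U_i$ and equal to $1$ on $V_i$, consider time-one maps of vector fields of the form $\beta_i \cdot X$, where $X$ ranges over (i) all constant vector fields on $\RR^n$ (to get the translation directions needed for (A1)), and (ii) all linear vector fields $x \mapsto Ax$ with $A$ in a neighborhood of $0$ in $\mf{gl}_n(\RR)$, whose flows realize a neighborhood of the identity in $\GL_n(\RR)$ acting on tangent planes (for (A2)). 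Taking $\mathcal{H}_i$ to be a compact ball in the parameter space (translation vector, matrix), and letting $\mathcal{H} = \mathcal{H}_1 \times \cdots \times \mathcal{H}_N$ with $\psi$ the ordered composition, one gets a compact $C^1$ family; the evaluation map is smooth because everything is built from flows of smooth vector fields depending smoothly on parameters.

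Next I would verify (A1). Fix $p \in M$ and choose $i$ with $p \in V_i$. Along the partial product where all coordinates except the $i$-th are at their basepoint (identity), the point $p$ gets carried by the earlier factors to some point $q$, then the $i$-th translation-type perturbations move $q$ in all $n$ directions (since $\beta_i \equiv 1$ near $q$ if we are careful — here one needs $q$ to still lie in the region where $\beta_i = 1$, which can be arranged by taking the translation parameters in $\mathcal{H}_i$ small enough relative to the $V_i$-vs-$U_i$ margin, or by re-indexing so the relevant chart is the last factor), and then the later factors push forward submersively. So $\partial_h (ev_\psi)_{(h_0,p)}$ is onto. Then (A2): given $\sigma_1, \sigma_2 \in \gr_k(TM)$ over points $p_1, p_2$, I would first use the translation factors to move $p_1$ to $p_2$ (chaining through overlapping charts as in the standard proof that a connected manifold is homogeneous under compactly-supported diffeomorphisms), bringing $\sigma_1$ to some $k$-plane $\sigma_1'$ at $p_2$; then use the linear-vector-field factors in a chart around $p_2$ to realize an element of $\GL_n$ taking $\sigma_1'$ to $\sigma_2$. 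Since $\GL_n(\RR)$ is generated by any neighborhood of the identity and acts transitively on $\gr_k(\RR^n)$, finitely many small linear perturbations suffice; one should make sure these can be taken in the interior $\ucirc{\mathcal{H}}$, which just means choosing the ambient balls large enough that the needed parameters are non-extremal.

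The main obstacle I anticipate is the bookkeeping around the cutoff functions: a perturbation $\beta_i X$ only agrees with the "pure" model ($X$ itself, i.e. a genuine translation or linear map) on the region $\{\beta_i = 1\}$, so when I chain perturbations I must track that the orbit point stays inside the good region of the active chart at each stage, and that the derivative computation for (A1)/(A2) is done at a parameter value where this holds. This is handled by taking all parameter balls small (so orbits barely move) and, for (A2), by subdividing a path from $p_1$ to $p_2$ finely enough that each step stays within one chart's good region — a compactness argument. A secondary point is checking that $ev_\psi : \mathcal{H} \times M \to M$ is genuinely $C^1$ (indeed $C^\infty$) rather than merely continuous in $h$; this follows from smooth dependence of flows on parameters and the fact that a finite composition of $C^\infty$ maps depending smoothly on parameters is again such. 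Once these are in place, Theorem~\ref{thm:existence1} follows by applying Theorem~\ref{thm:main} to the family just constructed.
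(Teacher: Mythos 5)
Your overall strategy --- cut-off vector fields in charts, composed across a finite cover --- is exactly the paper's, but as written the proposal has two concrete gaps. First, the parameter space $\mc{H} = \mc{H}_1 \times \cdots \times \mc{H}_N$ (one factor per chart, in a fixed order) is not large enough for (A2). To carry $p_1$ to a distant $p_2$ you must chain through overlapping charts in an order that depends on the pair, possibly revisiting charts, and to realize an arbitrary rotation of a $k$-plane from a small neighborhood of $0$ in $\mf{gl}_n$ you must compose several small perturbations in the \emph{same} chart. A single ordered pass allows neither; you need an iterated product $(\mc{H}_1 \times \cdots \times \mc{H}_N)^K$ with $K$ bounded uniformly (the paper takes $K = D+1$ with $D$ the diameter of the \v{C}ech $1$-complex of the cover). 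Your "subdivide the path" remark gestures at this but does not enlarge the parameter space accordingly. The paper also sidesteps the generation issue for rotations entirely by cutting off the full $\mathfrak{so}_n$-action radially, so that $L(v)$ restricts to $\exp(v) \in SO(n,\RR)$ on an inner ball and a \emph{single} application of the local family already acts transitively on $\gr_k$ over that ball.

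Second, (A1) must hold at \emph{every} $(h,p)$, and you only verify surjectivity of $\partial_h ev_\psi$ when the point arriving at the active chart still lies in its good region. Your two fixes are problematic: "take the translation parameters small" conflicts with (A2), which needs translations of definite size to chain between charts; and "re-index so the relevant chart is last" is not available, since the ordering is fixed in the definition of $\psi$. The missing ingredient is the paper's Lemma \ref{lem:compsub}: design each local family so that it is submersive in the parameter at every point it actually moves (achieved via a radial cutoff $\beta$ with $\beta > 0$ on the whole support, which makes $\partial_t ev_{\psi,p}$ lower triangular with positive diagonal for \emph{all} parameters), and note that for a given $p$ the first factor in the composition that moves $p$ already supplies surjectivity, which is then preserved under post-composition with the remaining diffeomorphisms. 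With that lemma and the iterated product, your construction goes through and matches the paper's.
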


\noindent We emphasize that the $\mc{H}$ constructed here has non-empty boundary. This is the main motivation for allowing $\bd \mc{H} \neq \varnothing$ in the definition of $C^r$ families, and in fact Theorem \ref{thm:existence1} fails in general if one requires $\bd \mc{H} = \varnothing$. For details see \S \ref{sec:remH}.

\subsection{Background and motivation} Formulas of the type considered here have a long history in integral geometry and a well-developed theory in the setting of homogeneous spaces of compact Lie groups. From this perspective the basic antecedent to our result is

\begin{thm}[Poincar\'e's formula for homogeneous spaces] \label{thm:poincare}
Let $M^n$ be a closed Riemannian manifold with $G$ a compact Lie group acting smoothly and transitively on $M$ by isometries. Suppose the action is also transitive on $\gr_k(TM)$, for every $1 \leq k \leq n-1$. Then for every $k, \ell \geq 0$ satisfying $k+\ell \geq n$, there is a constant $C_{k, \ell} > 0$ such that for any submanifolds $V, W \subset M$ with $\dim(V) = k$ and $\dim(W) = \ell$, we have 
\begin{equation} \label{eqn:poin}
\ds\int_{g \in G} \vol(gV \cap W)\ dG = C_{k, \ell} \vol(V) \vol(W),
\end{equation}
where $dG$ denotes the Haar measure on $G$. 
\end{thm}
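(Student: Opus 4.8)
The plan is to analyze $I(V,W):=\int_G\vol(gV\cap W)\,dG$ by writing it as an integral over an incidence space and applying the coarea formula twice. Give $G$ a bi-invariant metric (possible since $G$ is compact) and set $Z:=\{(g,v)\in G\times V:gv\in W\}$, with the metric induced from the product metric on $G\times V$. Since $G$ acts transitively on $M$, the evaluation map $F:G\times V\to M$, $F(g,v)=gv$, is a submersion---already on each slice $T_gG\times\{0\}$---hence transverse to $W$, so $Z=F^{-1}(W)$ is a smooth manifold of dimension $\dim G+k+\ell-n$. It carries two projections, $\alpha:Z\to G$, $\alpha(g,v)=g$, and $\beta:Z\to V\times W$, $\beta(g,v)=(v,gv)$ (a submersion everywhere, by a short check using surjectivity of the infinitesimal action). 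The fiber $\alpha^{-1}(g)$ is $V\cap g^{-1}W$, which the isometry $g$ maps onto $gV\cap W$, so $\vol(\alpha^{-1}(g))=\vol(gV\cap W)$; the fiber $\beta^{-1}(v,w)$ is the coset $\{g\in G:gv=w\}$ of the stabilizer $G_v$, and since stabilizers are conjugate and $G$ acts transitively on $M$, all these cosets are mutually isometric in the bi-invariant metric, of some constant volume $\kappa>0$. Writing $\nj$ for the normal Jacobian $\sqrt{\det(d(\cdot)\,d(\cdot)^{T})}$, the coarea formula applied to $\alpha$ and to $\beta$ gives
\[
I(V,W)=\int_Z\nj(\alpha)\,dZ,\qquad\qquad\int_Z\nj(\beta)\,dZ=\kappa\,\vol(V)\,\vol(W),
\]
so everything reduces to comparing the two normal Jacobians.

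At $(g,v)\in Z$ the ratio $\theta:=\nj(\alpha)/\nj(\beta)$ depends only on the mutual position of $T_vV$ and $(dg_v)^{-1}(T_{gv}W)$ inside $T_vM$ (through their principal angles) and on the infinitesimal $G$-action at $gv$; in particular it is \emph{not} constant on $Z$. The crux is that its integral over each $\beta$-fiber \emph{is} a universal constant. Indeed, writing $I(V,W)=\int_Z\theta\cdot\nj(\beta)\,dZ$ and applying coarea to $\beta$ once more, $I(V,W)=\int_{V\times W}\big(\int_{\beta^{-1}(v,w)}\theta\big)\,d(V\times W)$, the inner integral taken against the density induced on the fiber. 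Parametrize $\beta^{-1}(v,w)=g_0G_v$; as $h$ ranges over $G_v$, the isometry $d(g_0h)_v:T_vM\to T_wM$ sweeps $T_vV$ over all of $\gr_k(T_wM)$, because $G_v$ acts transitively on $\gr_k(T_vM)$, and the pushforward of the normalized Haar measure under $h\mapsto d(g_0h)_v(T_vV)$ is a probability measure invariant under a transitively acting compact group, hence the $O(T_wM)$-invariant one by uniqueness. Since $\theta$ at $g_0h$ depends on $h$ only through $d(g_0h)_v(T_vV)$, the fiber integral equals $\kappa$ times the $O(T_wM)$-average of $\theta$ over $\gr_k(T_wM)$; call it $c(w,T_wW)$, so $I(V,W)=\kappa\int_{V\times W}c(w,T_wW)\,d(V\times W)$.

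Finally, $c(w,T_wW)$ is independent of $(w,T_wW)\in\gr_\ell(TM)$: translating by $g_1\in G$ carries the infinitesimal action at $w$ to that at $g_1w$ by conjugation with the isometries $d(g_1)_w$ of $T_wM$ and $\operatorname{Ad}_{g_1}$ of $\mf g$ (the latter an isometry for the bi-invariant metric), and the normal-Jacobian ratio is invariant under such isometric changes of coordinates; together with the $O$-invariance of the averaging measure this gives $c(g_1w,d(g_1)_w(T_wW))=c(w,T_wW)$, so transitivity of $G$ on $\gr_\ell(TM)$ forces $c\equiv c_{k,\ell}$ constant. Hence $I(V,W)=\kappa\,c_{k,\ell}\,\vol(V)\,\vol(W)$, the asserted identity with $C_{k,\ell}=\kappa\,c_{k,\ell}>0$. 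Measurability of $g\mapsto\vol(gV\cap W)$ and its integrability (with a.e.\ finiteness when $\vol(V),\vol(W)<\infty$) come out of the same coarea bookkeeping, and when either volume is infinite all the identities above remain valid in $[0,\infty]$ by Tonelli.

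I expect the main obstacle to be precisely this middle step: showing that $\nj(\alpha)/\nj(\beta)$---which genuinely varies with the principal angles between the two tangent subspaces---nonetheless averages to a constant along every $\beta$-fiber, together with the routine but fiddly Sard/transversality arguments needed to justify the manifold structure of $Z$ and the fiber-volume identities (e.g.\ that $gV\pitchfork W$ for a.e.\ $g$, so that $gV\cap W$ is an honest submanifold with $\vol(gV\cap W)=\vol(V\cap g^{-1}W)$). A more conceptual alternative would be to observe that $V\mapsto I(V,W)$ is a continuous, finitely additive, ``first-order'' functional, hence of the form $\int_Vf_W(T_vV)\,dV$ with $f_W$ forced to be $G$-invariant---and so constant---by $G$-invariance of $I$ and transitivity on $\gr_k(TM)$, and then to symmetrize in $V$ and $W$; but making the locality claim precise requires essentially the same local computation.
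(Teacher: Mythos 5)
Your argument is correct and takes essentially the same route the paper sketches (and attributes to Howard \cite{howard}): a double fibration over the incidence manifold $Z$, two applications of the coarea formula, the mutual isometry of the fibers $G_{x,y}$ in a bi-invariant metric, and the reduction of the remaining normal-Jacobian ratio to a Grassmannian average that is constant by transitivity. Your middle step --- identifying the pushforward of Haar measure on the stabilizer with the invariant measure on $\gr_k(T_wM)$ by uniqueness --- is precisely the point the paper alludes to when it remarks that the functions $\Phi_k$ are constant in the homogeneous case, so there is no substantive difference in approach.
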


In the form given here this result is due to Howard \cite{howard}. The basic idea behind the proof is to view the left-hand side as a double integral over what we will later call the \emph{intersection manifold} associated to $V$ and $W$, i.e. the space of all intersections between $g(V)$ and $W$ for $g \in G$. By changing the order of integration one then (essentially) equates this integral with the average volume of the spaces $G_{x,y} = \{  g \in G \ : \ gx = y  \}$ over all $(x, y) \in V \times W$. (In other words, one replaces the average volume of intersections with the average volume of the spaces of maps realizing each possible intersection.) Since all such spaces are isometric for any bi-invariant metric on $G$ inducing the Haar measure, the result follows. For details, see \cite[\S 3]{howard} or \cite[\S A.4]{condition}. Our adaptation of this approach below is particularly indebted to \cite[\S 13]{bcss} and \cite[\S 4]{shubsmale5}. For a classic presentation of the foundational material, see \cite{santalo}. 

Observe that any action $G \curvearrowright M$ as in Theorem \ref{thm:poincare} can be viewed as a compact $C^\infty$ family of diffeomorphisms of $M$, namely $(G, \psi)$ where $\psi : G \ni g \mapsto (x \mapsto gx) \in \Diff^\infty(M)$. This family clearly satisfies (A1) and (A2), and thus Theorem \ref{thm:main} yields (\ref{eqn:poin}) \emph{up to a uniform multiplicative error}. The fact that this error can be eliminated in the setting of Theorem \ref{thm:poincare} depends on (among other things) the assumption that the family $(G, \psi)$ consists of isometries. For an analogous result applicable to general manifolds, however, this is too much to ask: if $(\mc{H}, \psi)$ satisfies (A1) and $\im(\psi) \subset \Isom(M)$ for some choice of Riemannian metric on $M$, then the action of $\Isom(M)$ on $M$ is automatically transitive and thus $M$ is homogeneous, i.e. $M \cong G/H$ for $G$ a compact Lie group and $H < G$ closed. An inequality incorporating some bounded multiplicative error therefore seems to be the most one could expect from an extension of Poincar\'{e}'s formula to non-homogeneous manifolds.

Our primary motivation for formulating such an extension comes from smooth dynamics.\footnote{For a similar generalization from a different direction, see the work of Fu in \cite{fu}, and especially his definition of an ``admissible measured family" of diffeomorphisms in \S 6. We regret that we became aware of this paper too late to explore the connection further.} Given a $C^1$ diffeomorphism $f : M \to M$ and a pair of complementary dimensional submanifolds $V, W \subset M$, one would like to understand how the growth of $\vol(f^n(V))$ affects the distribution of (a large compact piece of) the isotopy class of $f^n(V)$ with respect to $W$. Our results here imply that we can fix a compact family of diffeomorphisms $\mc{H}$ of $M$ so that the average number of intersections between $h(f^n(V))$ and $W$ has the same order of growth (in the sense of e.g. \cite{correa_pujals_2023}) as $\vol(f^n(V))$. In future work we hope to show how this could be a useful tool in a new approach to the second author's \emph{$C^r$ entropy conjecture} for the open case(s) of $r < \infty$.

\pagebreak

\subsection{Assumptions} \label{sec:ass} Theorem \ref{thm:main} is a ``backwards" theorem in the sense that we started with the estimate (\ref{eqn:bounds}) and looked for reasonable (and in particular abundantly realized) assumptions that would make it true. Here we collect some comments on our eventual selections (A1) and (A2).

First, we emphasize that the statement of Theorem \ref{thm:main} is not quite optimal, in two main ways:
\begin{enumerate}[label=(\roman*)]
    \item The upper bound in (\ref{eqn:bounds}) actually follows from (A1) alone; the surjectivity assumption (A2) is needed only for the lower bound. Note also that (A1), in contrast to (A2), is a purely local property of $(\mc{H}, \psi)$. Since the property (A1) is also easily seen to be invariant under (either left- or right-) translation of the family by a diffeomorphism $g$, this implies that an upper bound as in (\ref{eqn:bounds}) can be obtained for finite-dimensional slices of $\Diffone(M)$ concentrated arbitrarily close to any given diffeomorphism $f$.

    \item In the other direction, it follows from the proof that (A2) is somewhat stronger than necessary for our result. What we really need is the following: 
    \vspace{1mm}
    \begin{itemize}[label=$\operatorname{(A2^*)}$]
    \setlength{\itemindent}{2em}
    \item \label{eqn:A2star} For any $\sigma_p \in \gr_k(T M)$ and $\sigma_q \in \gr_{n-k}(TM)$, there is an $h \in \ucirc{\mathcal{H}}$ such \\
    \phantom{.}  \ \ \ \ that $h(p) = q$ and $dh(\sigma_p) \pitchfork \sigma_q$.
    \end{itemize}
    \vspace{1mm}
    For further discussion of this point see the remark after Claim \ref{claim:eta1pos}. We have opted to use $\Atwo$ in the statement of Theorem \ref{thm:main} nonetheless, primarily to emphasize the analogy with the transitivity assumption in Theorem \ref{thm:poincare}, and also because it is no more difficult in practice to construct a family satisfying $\Atwo$ than one satisfying the weaker condition $\Atwostar$.
\end{enumerate}

These qualifications in mind, the purpose of our assumptions is heuristically the following. The assumption that $ev$ is a submersion implies that no particular intersection $h(p) = q$ between $h(V) \cap W$ can be locally persistent (since one can move $h(p)$ off itself by a small change in $h$). In particular, any non-transversal intersection between $h(V)$ and $W$ (near which the cardinality of $h(V) \cap W$ could explode) can be destroyed by a small perturbation of $h$. This ensures that almost all intersections are isolated, and moreover that the total number of such points (summed over $\mc{H}$) is never too large compared to $\vol(V) \cdot \vol(W)$. This yields the upper bound in Theorem \ref{thm:main} using (A1) alone.

To get a lower bound, one needs to show that $\mc{H}$ can always \emph{produce} intersections between pairs of points in $V$ and $W$. Moreover, since the quantity we are trying to bound is a sort of average of the number of intersections with respect to the volume measure on $\mc{H}$, these intersections should persist over open (thus positive measure) subsets of $\mc{H}$. In other words, $\mc{H}$ should be able to produce \emph{transverse} intersections between any pair of points in $V$ and $W$. This will certainly hold if $\mc{H}$ can move any tangential $k$-plane to any other, as guaranteed by (A2). 

The following example cleanly illustrates the main ideas in the isometric case: 

\begin{exam}[$\TT^2 \curvearrowright \TT^2$]
Let $M = \TT^2 = \RR^2/\ZZ^2$ be the standard flat $2-$torus, and $\mathcal{H} = \TT^2$ with $\psi : \TT^2 \longrightarrow \Diff(\TT^2)$ the smooth action of $\TT^2$ on itself by translations; explicitly, $\psi : a \mapsto (\psi_a : x \mapsto x+a)$. Then the pair $(\mathcal{H}, \psi)$ satisfies assumption (A1), but not (A2), since $dh = \text{id}$ for all $h \in \text{im}(\psi)$ (using the canonical identifications $T_p \TT^2 \cong \RR^2$) and thus 
$$
\text{im}(ev_1) = \{ (\sigma_1, \sigma_2) \ : \ \sigma_1 \text{ and } \sigma_2 \text{ are parallel} \} \neq \gr_1(T\TT^2) \times \gr_1(T\TT^2).
$$ 
More strongly, the fact that every element of $\mathcal{H}$ maps any element of $\gr_1(T\TT^2)$ to a parallel translate means that no element of $\gr_1(T\TT^2)$ can be made transverse to itself via $\mathcal{H}$. 

Theorem \ref{thm:main} correspondingly fails. For instance, take $I$, $J$ two parallel Euclidean line segments in $\TT^2$. Then it is easy to see that $\psi_a(I) \cap J = \varnothing$ for a.e. $a \in \TT^2$, so
$$
\ds\int_{a \in \TT^2} \#(\psi_a(I) \cap J) \ d\TT^2 = 0.
$$
This implies that the lower bound in Theorem \ref{thm:main} cannot hold for any positive $C$. 

If the angle between $I$ and $J$ is $\theta \neq 0$, however, it is similarly straightforward to show that 
$$
\ds\int_{a \in \TT^2} \#(\psi_a(I) \cap J) \ d\TT^2 = |\sin \theta| \length(I) \cdot \length(J). 
$$
In particular, we have 
$$
\ds\int_{a \in \TT^2} \#(\psi_a(I) \cap J) \ d\TT^2 \leq \length(I) \cdot \length(J) 
$$
for any Euclidean line segments $I, J \subset \TT^2$, and by a subdivision and approximation argument it is not difficult to see that this holds more generally for $I, J \subset \TT^2$ any embedded $C^1$ curves. Thus the upper bound in Theorem \ref{thm:main} holds here with $C=1$. 
\end{exam}

\subsection{A digression on $\mc{H}$} \label{sec:remH} One technical hurdle in proving Theorem \ref{thm:main} stems from our allowance of the possibility that the parameter space $\mathcal{H}$ has boundary. If one assumes instead that $\bd \mathcal{H} = \varnothing$, the availability of Ehresmann's fibration theorem (applied to the submersion $ev$) substantially streamlines the proof; see the remark concluding \S \ref{sec:bundles}. However, here Ehresmann's theorem cuts both ways, imposing strong topological constraints on any boundaryless $\mathcal{H}$ satisfying $\Aone$. These constraints imply in particular that for general $M$, the condition $\bd \mathcal{H} = \varnothing$ is too much to ask:

\begin{prop} \label{prop:Hnoboundary}
Suppose $M$ is a closed manifold and $(\mc{H}, \psi)$ a compact, connected family of $C^1$ diffeomorphims of $M$ with $\bd \mc{H} = \varnothing$, such that $ev_p$ is a submersion for some $p \in M$. Then $\pi_1(M)$ is a finite extension of a quotient of a subgroup of $\pi_1(\Diff_0(M))$.
\end{prop}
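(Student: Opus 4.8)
The plan is to exploit Ehresmann's fibration theorem applied to the submersion $ev_p : \mc{H} \to M$, $h \mapsto h(p)$. Since $\mc{H}$ is compact with $\bd \mc{H} = \varnothing$ and $ev_p$ is a proper submersion onto the connected manifold $M$, Ehresmann's theorem tells us $ev_p$ is a locally trivial fibration, hence a fiber bundle with fiber $F = ev_p^{-1}(p) = \{h \in \mc{H} : h(p) = p\}$, which is itself a compact manifold (not necessarily connected). I would then run the long exact sequence of homotopy groups of this fibration:
\begin{equation*}
\pi_1(F) \too \pi_1(\mc{H}) \too \pi_1(M) \too \pi_0(F) \too \pi_0(\mc{H}) = \ast,
\end{equation*}
the last term being a point because $\mc{H}$ is connected. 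Exactness at $\pi_0(F)$ shows $\pi_1(M) \to \pi_0(F)$ is surjective, and exactness at $\pi_1(M)$ identifies the kernel of this map with the image of $\pi_1(\mc{H}) \to \pi_1(M)$.

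The next step is to control $\pi_1(\mc{H})$ and $\pi_0(F)$. For $\pi_0(F)$: since $\mc{H}$ is compact, $F$ is compact, so $\pi_0(F)$ is finite; thus the quotient $\pi_1(M)/(\text{image of }\pi_1(\mc{H}))$ is finite, i.e. $\pi_1(M)$ is a finite extension of the image $G$ of $\pi_1(\mc{H})$ in $\pi_1(M)$. It remains to see that this image $G$ is a quotient of a subgroup of $\pi_1(\Diff_0(M))$. Here I would use the parametrizing map $\psi : \mc{H} \to \Diff^1(M)$, which is continuous by hypothesis. Replacing $\psi$ by $h \mapsto \psi(h)\psi(h_0)^{-1}$ for a fixed basepoint $h_0 \in \mc{H}$, we may assume $\psi$ maps a basepoint of $\mc{H}$ to $\id_M$; then, since $\mc{H}$ is connected, $\psi$ lands in the identity component $\Diff_0(M)$, and we get an induced homomorphism $\psi_* : \pi_1(\mc{H}) \to \pi_1(\Diff_0(M))$. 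Meanwhile $ev_p$ factors as $\mc{H} \xrightarrow{\psi} \Diff_0(M) \xrightarrow{g \mapsto g(p)} M$, and the orbit map $\Diff_0(M) \to M$ is itself a fibration (orbit maps for the $\Diff_0(M)$-action), so $G = \im(\pi_1(\mc{H}) \to \pi_1(M))$ is contained in $\im(\pi_1(\Diff_0(M)) \to \pi_1(M))$. More precisely, $G$ is the image of $\psi_*(\pi_1(\mc{H})) \subset \pi_1(\Diff_0(M))$ under the orbit-map homomorphism $\pi_1(\Diff_0(M)) \to \pi_1(M)$, hence a quotient of the subgroup $\psi_*(\pi_1(\mc{H})) < \pi_1(\Diff_0(M))$. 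Chaining this with the finite extension above gives the claim.

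The main obstacle I anticipate is a basepoint/connectedness bookkeeping issue rather than anything deep: Ehresmann gives a fiber bundle but $F$ need not be connected, so one must be careful that "finite extension of a quotient of a subgroup of $\pi_1(\Diff_0(M))$" is exactly the statement the exact sequence yields, and that the translation normalizing $\psi(h_0) = \id$ is harmless (it is, since the property $\Aone$ and connectedness are translation-invariant, and $ev_p$ only changes by post-composition with the diffeomorphism $\psi(h_0)^{-1}$, which is a homeomorphism of $M$ and so doesn't affect the homotopy-theoretic conclusion). A secondary point to handle cleanly is the assertion that the orbit map $\Diff_0(M) \to M$, $g \mapsto g(p)$, is a fibration; this is standard (it is the orbit map of a transitive action of the topological group $\Diff_0(M)$, and one can invoke the local-section/local-triviality results for such orbit maps, e.g. via Palais or via the fact that $ev$ from $\Diff_0(M)$ is a submersion in the appropriate sense), but it should be cited or briefly justified. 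Once these technical points are in place the homotopy exact sequence does all the real work.
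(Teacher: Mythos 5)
Your proposal is correct and follows essentially the same route as the paper: apply Ehresmann's theorem to the proper submersion $ev_p$, read off from the tail of the homotopy long exact sequence that $(ev_p)_*$ is virtually onto (using compactness of the fiber to bound $\pi_0$), and factor $ev_p$ through the orbit map of $\Diff_0(M)$ after translating so that $\psi(\mc{H}) \subset \Diff_0(M)$. Your extra care with basepoints and with identifying the image of $(ev_p)_*$ as a quotient of the subgroup $\psi_*(\pi_1(\mc{H})) < \pi_1(\Diff_0(M))$ only makes explicit what the paper leaves to the reader.
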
 

\begin{proof}
Let $(\mc{H},\psi)$ be such a family. After translating and perturbing we may assume that $\psi(\mc{H}) \subset \Diff_0(M)$ the identity component of $\Diff^\infty(M)$. Choose $p \in M$ such that $ev_p$ a submersion. 

By Ehresmann's theorem, $ev_p : \mc{H} \to M$ is a locally trivial fibration, say with fiber $\mc{F}$. The end of the homotopy long exact sequence for this fibration reads
$$
\pi_1(\mc{H}) \xrightarrow[]{ \ (ev_p)_* \ } \pi_1(M) \xrightarrow[ \ \ \ \ ]{} \pi_0(\mc{F}) \xrightarrow[ \ \ \ \ ]{} 1.
$$ 
Since $\mc{H}$ and (therefore) $\mc{F}$ are compact we must have $\# \pi_0(\mc{F}) < \infty$, so this sequence implies that $(ev_p)_*$ is virtually onto. On the other hand, $ev_p = EV_p \circ \psi$ where $EV_p : \Diff_0(M) \to M$, $f \mapsto f(p)$. Thus $(ev_p)_*$ factors through $\pi_1(\Diff_0(M))$, and the result follows. 
\end{proof}

\begin{cor} \label{cor:hypno}
Suppose $M$ is a closed hyperbolic manifold with $\dim(M) \leq 3$. Then there is no boundaryless compact family of diffeomorphisms of $M$ satisfying $\Aone$. 
\end{cor}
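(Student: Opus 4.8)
The plan is to argue by contradiction, feeding the hypothesis into Proposition \ref{prop:Hnoboundary} and then invoking known computations of $\pi_1(\Diff_0(M))$ in low dimensions. Suppose some closed hyperbolic $M$ with $\dim M \le 3$ did admit a boundaryless compact family $(\mc{H}, \psi)$ satisfying $\Aone$. A compact manifold has only finitely many connected components, each itself compact and (since $\bd \mc{H} = \varnothing$) boundaryless; restricting $\psi$ to one such component $\mc{H}_0$ yields a compact, connected, boundaryless family for which $ev : \mc{H}_0 \times M \to M \times M$ is still a submersion (restriction of a submersion to an open subset of the domain), hence so is $ev_p$ for every $p \in M$ by the reformulation of $\Aone$ given after Theorem \ref{thm:main}. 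Proposition \ref{prop:Hnoboundary} then applies to $(\mc{H}_0, \psi|_{\mc{H}_0})$ and tells us that $\pi_1(M)$ is a finite extension of a quotient of a subgroup of $\pi_1(\Diff_0(M))$.

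The second step is to show this is impossible, by checking that $\pi_1(\Diff_0(M))$ is trivial while $\pi_1(M)$ is infinite. Dimension $1$ is vacuous (there is no closed hyperbolic $1$-manifold). In dimension $2$, $M$ is a closed orientable surface of genus $\ge 2$, and $\Diff_0(M)$ is contractible by the Earle--Eells theorem; in particular $\pi_1(\Diff_0(M)) = 1$. In dimension $3$, Gabai's proof of the Smale conjecture for closed hyperbolic $3$-manifolds shows that the inclusion $\Isom(M) \hookrightarrow \Diff(M)$ is a homotopy equivalence, and since $\Isom(M)$ is a finite group (as $M$ is closed and negatively curved, so $\Isom_0(M)$ is trivial), every component of $\Diff(M)$ — in particular $\Diff_0(M)$ — is contractible, so again $\pi_1(\Diff_0(M)) = 1$. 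In all cases a subgroup of, and hence a quotient of a subgroup of, the trivial group is trivial, so Proposition \ref{prop:Hnoboundary} forces $\pi_1(M)$ to be a finite extension of the trivial group, i.e.\ finite. But the fundamental group of a closed hyperbolic $n$-manifold acts freely, properly discontinuously, and cocompactly on the noncompact space $\HH^n$, so it is infinite — a contradiction, which completes the proof.

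I expect the only genuine ``obstacle'' here to be the reliance on a deep input in the three-dimensional case, namely the Smale conjecture for hyperbolic $3$-manifolds; everything else is bookkeeping on top of Proposition \ref{prop:Hnoboundary}. This is also precisely where the dimension restriction enters: the analogous vanishing $\pi_1(\Diff_0(M)) = 1$ is not known for higher-dimensional closed hyperbolic $M$, and without it the argument yields nothing. If desired one could phrase a marginally more flexible statement — the same contradiction arises for any closed manifold $M$ with $\pi_1(M)$ infinite and such that no finite-index subgroup of $\pi_1(M)$ is a quotient of a subgroup of $\pi_1(\Diff_0(M))$ — but the hyperbolic, $\dim \le 3$ case is the cleanest instance in which the hypothesis can be verified outright.
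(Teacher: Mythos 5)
Your proof is correct and follows the same route as the paper: apply Proposition \ref{prop:Hnoboundary}, then use Earle--Eells (dimension $2$) and Gabai (dimension $3$) to conclude $\pi_1(\Diff_0(M))$ is trivial, contradicting the infiniteness of $\pi_1(M)$. The only addition is your (reasonable) bookkeeping step of passing to a connected component of $\mc{H}$, which the paper leaves implicit.
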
 

\begin{proof} 
By work of Earle-Eells (in dimension 2) and Gabai (in dimension 3), $\Diff_0(M)$ is contractible (\cite{e-eells}, \cite{gabai}). In particular, $\pi_1(\Diff_0(M)) = 0$. Since (by hyperbolicity of $M$) $\pi_1(M)$ is infinite, by Proposition \ref{prop:Hnoboundary} we are done.
\end{proof}

The upshot here is that Theorem \ref{thm:existence1} fails if one requires that $\bd \mc{H} = \varnothing$. More strongly, these results (combined with the restrictions on higher homotopy groups obtained by continuing the above exact sequence) suggest that manifolds which admit a submersive compact family of diffeomorphisms without boundary may be quite rare. Clearly homogeneous spaces of compact Lie groups form the main class of examples. Are there others? 

\begin{question} \label{question:Hboundary}
Which closed manifolds $M$ admit a compact smooth family of diffeomorphisms $\mathcal{H} \longrightarrow \Diff(M)$ with $\bd \mathcal{H} = \varnothing$, such that the associated evaluation map $ev : \mathcal{H} \times M \longrightarrow M \times M$ is a submersion?
\end{question} 

 We note that, on the question of existence, the choice of regularity is irrelevant. A $C^r$ family of diffeomorphisms can be viewed as a map $\Phi \in C^r(\mc{H} \times M, M)$ such that $\Phi(h, \cdot) \in \Diffr(M)$ for all $h \in \mc{H}$; with this notation, the submersion condition simply means that $\Phi(\cdot, p) : \mc{H} \to M$ is a submersion for all $p \in M$. When $\mc{H}$ and $M$ are compact, these properties are open in $C^1(\mc{H} \times M, M)$, so if one obtains a boundaryless $C^1$ family satisfying $\Aone$, by perturbing one can obtain a $C^\infty$ family as well. 

This intriguing diversion aside, the proof of Theorem \ref{thm:main} in the general, possibly-with-boundary setting turns out to work in much the same way as the Ehresmann-based proof for the boundaryless case, modulo some basic lemmas concerning submersions from compact manifolds with boundary which we relegate to Appendix A.\footnote{Though these straightforward results are presumably well-known, we were unable to locate them in the literature.} The content of these lemmas is to show that our required bounds (which follow trivially from compactness when the fibers of $ev$ move $C^1$-continuously) still follow from our assumptions when we allow boundary, provided that the surjectivity requirement in (A2) is phrased in terms of the interior of $\mathcal{H}$. 

\subsection{Acknowledgements} We would like to thank Bryce Gollobit, Charles Pugh, Enrique Pujals, and Federico Rodriguez Hertz for many stimulating conversations and helpful suggestions on the contents of this paper. The first author would also like to thank Ryan Utke for pointing out the relevance of Ehresmann's theorem to these results. 

\pagebreak

\section{Preliminaries}

Let $(\mc{H},\psi)$ satisfy the hypotheses of Theorem \ref{thm:main}, and write $d := \dim(\mc{H}).$ We begin by introducing the main objects used in our proof.

\subsection{Normal Jacobians} \label{sec:normjac} 

Let $E_1, E_2$ be finite-dimensional real inner product spaces. Then for any linear map $A : E_1 \to E_2$, one defines the \emph{normal Jacobian} by the formula
$$
\nj(A) := \det(AA^*)^{1/2} \in \RR_{\geq 0}.
$$
Note that $\nj(A) > 0$ iff $A$ is surjective. One can check that
$$
\nj(A) = |\det ( A|_{\ker(A)^\perp} )|.
$$

For a $C^1$ map $f : M \to N$ between Riemannian manifolds, we define the \emph{normal Jacobian of $f$} pointwise, by
$$
\nj(f)(x) := \nj(df_x) = \det(df_x \cdot df_x^*)^{1/2}.
$$ 
Thus $\nj(f) : M \to \RR_{\geq 0}$, and since $f$ is $C^1$ it is easy to see that $NJ(f)$ is continuous. By the above, $\nj(f)(x) > 0$ iff $x$ is a regular point of $f$.

\subsection{The solution manifold $\widetilde{\mathcal{V}}$} \label{sec:solmfld} 

The \emph{solution manifold} $\widetilde{\mathcal{V}} \subset \mathcal{H} \times M \times M$ is the subspace consisting of all triples of the form $(h, p, h(p))$. Equivalently, $\widetilde{\mathcal{V}} = \Gamma(ev_\psi)$, where
\begin{align*}
ev_\psi \ : \ &\mathcal{H} \times M \longrightarrow M \\
&(h, p) \ \longmapsto \ h(p).
\end{align*}
Since $ev_\psi$ is $C^1$, $\widetilde{\mathcal{V}}$ is a neat $C^{1}$ submanifold with boundary of $\mathcal{H} \times M \times M$, diffeomorphic to $\mathcal{H} \times M$.\footnote{By \emph{neat} we mean that $\bd\widetilde{\mathcal{V}} \subset \bd (\mathcal{H} \times M \times M) = \bd \mathcal{H} \times M \times M$.} In particular, it inherits $C^{1}$ projection maps $\widetilde{\pi}_1, \widetilde{\pi}_2$: 
\begin{center}
\begin{tikzcd}
& \arrow{ld}[above]{\tilde{\pi}_1 \ \ \ } \widetilde{\mathcal{V}} \arrow[rd, "\tilde{\pi}_2"] & \\
\mathcal{H} & & M \times M.
\end{tikzcd}
\end{center}

\noindent By (A1) and the $k=0$ case of (A2), ${\tilde{\pi}_2}|_{\text{int}(\widetilde{\mathcal{V}})} : \text{int}(\widetilde{\mathcal{V}}) \to M \times M$ is a surjective submersion. For $(p, q) \in M \times M$ we write
$$
\mathcal{V}_{p,q} := \tilde{\pi}_2^{-1}(p,q) \cap \text{int}(\widetilde{\mathcal{V}}) = \left({\tilde{\pi}_2}|_{\text{int}(\widetilde{\mathcal{V}})} \right)^{-1}(p,q).
$$ 
Then each $\mathcal{V}_{p,q}$ is an embedded $(d-n)$-dimensional submanifold of $\text{int}(\widetilde{\mathcal{V}})$ with Riemannian metric inherited from $\mathcal{H} \times M \times M$ and induced Riemannian density which we will denote by $d\mathcal{V}_{p, q}$.

\subsection{The intersection manifolds $\mc{V}$} \label{sec:solmfld} 

Now let $V, W \subset M$ be complementary dimensional submanifolds of $M$, say $\dim(V) = k$ and $\dim(W) = n - k$. We define the \emph{intersection manifold} $\mc{V}$ associated to $\mc{H}$, $V$, and $W$ by 
\begin{align*}
\mathcal{V} &:= \{ (h, p, q) \in \ucirc{\mathcal{H}} \times V \times W \ : \ h(p) = q \} \\
&= \left( {\tilde{\pi}_2}\big|_{\text{int}(\widetilde{\mathcal{V}})} \right)^{-1}(V \times W) = \tilde{\pi}_2^{-1}(V \times W) \cap \text{int}(\widetilde{\mathcal{V}}).
\end{align*}
Since ${\tilde{\pi}_2}\big|_{\text{int}(\widetilde{\mathcal{V}})} : \text{int}(\widetilde{\mathcal{V}}) \to M \times M$ is a surjective submersion, $\mc{V}$ is a $C^1$ submanifold of $\text{int}({\widetilde{V}})$ with $\codim_{\wt{\mc{V}}}(\mc{V}) = \codim_{M \times M}(V \times W) = n$, so
$$
\dim(\mc{V}) = \dim(\wt{\mc{V}}) - n = \dim(\mc{H}) + n - n = \dim(\mc{H}).
$$
Again note that $\mc{V}$ inherits a Riemannian metric from $\mc{H} \times M \times M$; as usual we will write $d\mc{V}$ for the induced Riemannian density. 

For $i=1, 2$, define $\pi_i := {\tilde{\pi}_i}|_{\mathcal{V}}$. Then we have the diagram
\begin{center}
\begin{tikzcd}
& \arrow{ld}[above]{\pi_1 \ \ \ } \mathcal{V} \arrow[rd, "\pi_2"] & \\
\mathcal{H} & & V \times W
\end{tikzcd}
\end{center}
of $C^1$ maps, with $\pi_2$ again a surjective submersion. 

In the first step of the proof we will apply the co-area formula to this diagram to convert our main integral
$$
\ds\int_{h \in \mc{H}} \#(h(V) \cap W) \ d\mc{H}
$$
to an integral over $V \times W$ involving the normal Jacobians of $\pi_1$ and $\pi_2$. To estimate this latter integral, we will need a description of the tangent spaces
$$
T_{(h,p,q)} \mc{V} \subset T_h \mc{H} \times T_p V \times T_q W. 
$$
This is given easily by
\begin{claim} \label{claim:tangentspace} For all $(h,p,q) \in \mc{V}$ we have
$$
T_{(h,p,q)} \mathcal{V} = \{  (\dot{h}, \dot{p}, \dot{q}) \in T_h \mathcal{H} \times T_p V \times T_q W \ : \ J_{(h, T_p V, T_q W)}(\dot{p}, \dot{q}) = d(ev_p)_h(\dot{h}) \}
$$
where $J_{(h, T_p V, T_q W)} : T_p V \times T_q W \to T_q M$ is defined by
$$
J_{(h, T_p V, T_q W)}(v, w) = w - dh(v).
$$
\end{claim}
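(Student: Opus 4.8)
The plan is to compute $T_{(h,p,q)}\mc{V}$ by realizing $\mc{V}$ as a preimage under a submersion and then unwinding the differential. Since $\mc{V} = \tilde\pi_2^{-1}(V\times W)\cap \myint(\wt{\mc{V}})$ and $\tilde\pi_2|_{\myint(\wt{\mc{V}})}$ is a surjective submersion, the tangent space is $T_{(h,p,q)}\mc{V} = (d\tilde\pi_2)^{-1}(T_pV\times T_qW)$ inside $T_{(h,p,q)}\wt{\mc{V}}$. So the first step is to identify $T_{(h,p,q)}\wt{\mc{V}}$. Since $\wt{\mc{V}} = \Gamma(ev_\psi)$ is the graph of the $C^1$ map $ev_\psi : \mc{H}\times M\to M$, its tangent space at $(h,p,h(p))$ is the graph of $d(ev_\psi)_{(h,p)}$, i.e.
$$
T_{(h,p,q)}\wt{\mc{V}} = \{ (\dot h,\dot p,\dot r) \in T_h\mc{H}\times T_pM\times T_qM \ : \ \dot r = d(ev_\psi)_{(h,p)}(\dot h,\dot p) \}.
$$

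Next I would expand $d(ev_\psi)_{(h,p)}(\dot h,\dot p)$ using the decomposition of the differential into its $\mc{H}$- and $M$-directional parts: $d(ev_\psi)_{(h,p)}(\dot h,\dot p) = \partial_h(ev_\psi)_{(h,p)}(\dot h) + \partial_p(ev_\psi)_{(h,p)}(\dot p)$. The first term is exactly $d(ev_p)_h(\dot h)$ in the notation of the claim (here $ev_p = ev_{\psi,p}$). The second term is $dh(\dot p)$, since for fixed $h$ the map $p\mapsto ev_\psi(h,p) = h(p)$ has differential $dh_p$. Hence $T_{(h,p,q)}\wt{\mc{V}}$ consists of triples $(\dot h,\dot p,\dot r)$ with $\dot r = dh(\dot p) + d(ev_p)_h(\dot h)$.

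Now intersect with the condition defining $\mc{V}$: a point of $T_{(h,p,q)}\wt{\mc{V}}$ lies in $T_{(h,p,q)}\mc{V}$ precisely when $(\dot p,\dot r)\in T_pV\times T_qW$, i.e. $\dot p\in T_pV$ and $\dot r\in T_qW$. Writing $\dot q := \dot r$, the first membership is built into the statement, and substituting $\dot q = dh(\dot p) + d(ev_p)_h(\dot h)$ gives the constraint $\dot q - dh(\dot p) = d(ev_p)_h(\dot h)$, which is exactly $J_{(h,T_pV,T_qW)}(\dot p,\dot q) = d(ev_p)_h(\dot h)$. This yields the claimed description; a dimension count ($\dim = d$, consistent with \S\ref{sec:solmfld}) confirms nothing was lost. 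The one point requiring a little care — and the only real obstacle — is to justify that the preimage formula $T\mc{V} = (d\tilde\pi_2)^{-1}(T(V\times W))$ holds with equality (not just inclusion), which is the standard fact that a submersion restricted over a submanifold pulls back tangent spaces exactly; since $\tilde\pi_2|_{\myint(\wt{\mc{V}})}$ is a submersion and $(h,p,q)\in\myint(\wt{\mc{V}})$ by definition of $\mc{V}\subset\ucirc{\mc{H}}\times V\times W$, there is no boundary subtlety here and the identification is immediate.
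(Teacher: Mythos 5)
Your proof is correct and follows essentially the same route as the paper, whose entire argument is to differentiate the defining equation $h(p)=q$ to obtain $\dot q = d(ev_p)_h(\dot h) + dh(\dot p)$ and conclude. Your additional care in justifying the \emph{equality} of the two subspaces (via the preimage-under-a-submersion description of $\mc{V}$, rather than just the inclusion coming from differentiating along curves) fills in a detail the paper leaves implicit with ``The claim follows.''
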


\begin{proof}
Differentiating the defining equation $h(p) = q$ yields
\begin{align*}
\dot{q} = d(ev_p)_h(\dot{h}) + dh(\dot{p}).
\end{align*}
The claim follows.
\end{proof}

Notice that, since $d(ev_p)_h : T_p \mc{H} \to T_{h(p)} M$ is surjective for every $(h, p) \in \mc{H} \times M$, the projection map
\begin{align*}
    d(\pi_1)_{(h,p,q)} \ : \ T_{(h,p,q)} \mc{V} &\too T_h \mc{H} \\
    (\dot{h}, \dot{p}, \dot{q}) &\mapstoo \dot{h}
\end{align*}
will be surjective if and only if $J_{(h,T_p,T_q W)}$ is surjective, i.e. iff $dh(T_p V) + T_q W = T_q M$. We summarize this observation in the following: 

\begin{prop} \label{prop:pi1critpts}
The regular points of the projection map $\pi_1 : \mc{V} \to \mc{H}$ are precisely those $(h,p,q) \in \mc{V}$ such that $h(V)$ intersects $W$ transversally at $q$. Thus $h \in \ucirc{\mc{H}}$ is a regular value of $\pi_1$ iff $h(V) \pitchfork W$.
\end{prop}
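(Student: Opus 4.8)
The plan is to read the statement off directly from Claim~\ref{claim:tangentspace} and the observation recorded immediately before the proposition. Recall that, by that observation, $d(\pi_1)_{(h,p,q)} : T_{(h,p,q)}\mc{V} \to T_h\mc{H}$ is surjective if and only if the auxiliary map $J_{(h, T_p V, T_q W)} : T_p V \times T_q W \to T_q M$, $(v,w) \mapsto w - dh(v)$, is surjective, i.e. if and only if $dh(T_p V) + T_q W = T_q M$.

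First I would note that this last identity is precisely the assertion that $h(V)$ meets $W$ transversally at $q$. Since $h$ is a diffeomorphism of $M$, the image $h(V)$ is an embedded submanifold and $h|_V : V \to h(V)$ is a diffeomorphism; hence $T_q(h(V)) = dh(T_p V)$, using $q = h(p)$. So $dh(T_p V) + T_q W = T_q M$ becomes $T_q(h(V)) + T_q W = T_q M$, which is the defining condition for transversality of $h(V)$ and $W$ at their common point $q$. (Because $\dim V + \dim W = n$, this sum equalling $T_q M$ is moreover equivalent to $T_q(h(V)) \cap T_q W = \{0\}$, so such a $q$ is automatically isolated in $h(V) \cap W$.) This establishes the first claim: the regular points of $\pi_1$ are exactly the $(h,p,q) \in \mc{V}$ at which $h(V)$ crosses $W$ transversally.

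For the second claim I would simply unwind the definition of a regular value. By definition $h \in \ucirc{\mc{H}}$ is a regular value of $\pi_1$ exactly when every point of $\pi_1^{-1}(h)$ is a regular point; and the fiber $\pi_1^{-1}(h) = \{(h,p,q) : p \in V,\ q \in W,\ h(p)=q\}$ is carried bijectively onto the intersection set $h(V) \cap W$ by $(h,p,q) \mapsto q$. Combining this with the first claim, $h$ is a regular value of $\pi_1$ if and only if $h(V)$ meets $W$ transversally at each of their intersection points, i.e. if and only if $h(V) \pitchfork W$. The degenerate case is consistent: if $h(V) \cap W = \varnothing$ then $h \notin \im(\pi_1)$, so $h$ is vacuously a regular value, and likewise $h(V) \pitchfork W$ holds vacuously.

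There is no real obstacle here --- essentially all the work was already done in Claim~\ref{claim:tangentspace}. The only points needing a word of care are the identification $T_q(h(V)) = dh(T_p V)$ (which uses only that $h \in \Diff^1(M)$) and keeping track of the empty-intersection case when passing from regular points to regular values.
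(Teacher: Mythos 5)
Your proposal is correct and follows essentially the same route as the paper: the paper treats this proposition as a direct summary of the observation (derived from Claim \ref{claim:tangentspace}) that $d(\pi_1)_{(h,p,q)}$ is surjective iff $J_{(h,T_pV,T_qW)}$ is surjective iff $dh(T_pV)+T_qW=T_qM$, and gives no further argument. Your additional care with the identification $T_q(h(V))=dh(T_pV)$ and the vacuous empty-intersection case is consistent with, and slightly more explicit than, the paper's presentation.
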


The restriction to the interior of $\mc{H}$ in the above statement is necessary because by definition $\mc{V} \subset \ucirc{\mc{H}} \times V \times W$, and thus every $h \in \bd \mc{H}$ is trivially a regular value of $\pi_1$, even though $\mc{V}$ gives no (immediate) information about the intersections $h(V) \cap W$ for such $h$. However, since $\bd \mc{H}$ has measure zero in $\mc{H}$ and $\dim(\mc{H}) = \dim(\mc{V})$, by applying Sard's theorem to the $C^1$ map $\pi_1 : \mc{V} \to \ucirc{\mc{H}}$ we get

\begin{cor} \label{cor:trans}
$h(V) \pitchfork W$ for almost every $h \in \mc{H}$.
\end{cor}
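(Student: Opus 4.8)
The plan is to obtain this as an essentially immediate consequence of Sard's theorem applied to $\pi_1 : \mc{V} \to \ucirc{\mc{H}}$, using the identification of its regular values provided by Proposition~\ref{prop:pi1critpts}. Recall from that proposition that an element $h \in \ucirc{\mc{H}}$ is a regular value of $\pi_1$ (where, as usual, any $h \notin \im \pi_1$ counts as a regular value, in which case $h(V) \cap W = \varnothing$ and transversality holds vacuously) precisely when $h(V) \pitchfork W$. Hence it suffices to show that the set of $h \in \mc{H}$ which either lie in $\bd\mc{H}$ or are critical values of $\pi_1$ has measure zero.

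The one point that deserves care is the hypothesis of Sard's theorem, since $\mc{V}$ and $\pi_1$ are only $C^1$. However, we computed above that $\dim(\mc{V}) = \dim(\mc{H}) = d$, so $\pi_1$ is a map between manifolds of \emph{equal} dimension, and in that case Sard's theorem holds for $C^1$ maps (the required order of differentiability being $\max(1,\dim \mc{V} - \dim \mc{H} + 1) = 1$). Since $\mc{V}$ is a subset of the compact, hence second countable, space $\mc{H} \times M \times M$, countable unions of measure-zero subsets of $\mc{V}$ are again measure zero, so Sard's theorem applies and yields that the critical values of $\pi_1$ form a measure-zero subset of $\ucirc{\mc{H}}$. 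Adjoining the measure-zero set $\bd\mc{H} \subset \mc{H}$, we conclude that a.e.\ $h \in \mc{H}$ lies in $\ucirc{\mc{H}}$ and is a regular value of $\pi_1$; for every such $h$, Proposition~\ref{prop:pi1critpts} gives $h(V) \pitchfork W$.

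I do not anticipate a genuine obstacle: the substance of the argument is entirely contained in Proposition~\ref{prop:pi1critpts}, which does the differential-geometric work of translating ``regular value of $\pi_1$'' into ``transverse intersection'', together with the bookkeeping observation $\dim \mc{V} = \dim \mc{H}$ that makes the $C^1$ version of Sard's theorem applicable. An alternative packaging would be to invoke the parametric transversality theorem directly for the family $\{h(V)\}_{h \in \ucirc{\mc{H}}}$, but routing through $\pi_1$ is cleaner here since $\mc{V}$, $\pi_1$, and their relevant properties have already been assembled.
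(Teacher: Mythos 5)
Your argument is correct and is exactly the paper's: apply the $C^1$ Sard theorem to $\pi_1:\mc{V}\to\ucirc{\mc{H}}$ (valid since $\dim\mc{V}=\dim\mc{H}$), use Proposition \ref{prop:pi1critpts} to identify regular values with diffeomorphisms $h$ satisfying $h(V)\pitchfork W$, and discard the measure-zero set $\bd\mc{H}$.
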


\begin{remark}
The argument given here essentially follows the usual proof of the classical transversality theorem. In low regularity, however, its appeal to Sard's theorem requires constraints on the dimensions of $V$ and $W$. In particular, in the $C^1$ category Corollary \ref{cor:trans} fails when $\dim(V) + \dim(W) > \dim(M)$. This difficulty is the main reason for considering Hausdorff measures instead of volumes of intersections in the generalization to Theorem \ref{thm:main}; see \S \ref{sec:gen} for details. 

\end{remark}

An immediate consequence of the formula above is that the tangent space $T_{(h,p,q)}\mc{V}$ depends only on $p, T_p V$ and $T_q W$. The same is therefore true of the linear maps $(d\pi_i)_{(h,p,q)}$ (themselves just restrictions of projections), and consequently their normal Jacobians. 

We would like to show that more is true, namely that $\nj(\pi_1)$ and $\nj(\pi_2)$ are continuous when viewed as functions of $h$, $T_p V$, and $T_q W$. The next section introduces the objects necessary to make this statement precise.

\subsection{The bundles $\mathcal{G}_k \to \widetilde{\mathcal{V}}$} \label{sec:bundles}

For each $0 \leq k \leq n$ let
$$
\gr_k(TM) = \ds\bigsqcup_{p \in M} \gr_k(T_p M)
$$
denote the $k$-Grassmanian bundle over $M$. This is a smooth fiber bundle over $M$, associated to the tangent bundle $TM$ in the sense that it can be obtained by gluing local trivializations $U_\alpha \times \gr_k(\RR^n)$ according to the smooth clutching functions $\rho_{\alpha, \beta} : U_\alpha \cap U_\beta \to GL(n, \RR)$ for $TM$. For each $k$ we let $p_k : \gr_k(TM) \to M$ denote the standard (smooth) projection map. For $\sigma \in \gr_k(TM)$ we will typically write $\sigma = \sigma_x$ for $x \in M$ to mean that $x = p_k(\sigma)$, i.e. $\sigma \in \gr_k(T_x M)$. In words, $\sigma_x \in \gr_k(TM)$ simply denotes a $k$-plane tangent to $M$ at the point $x$. 

Now define
$$
\mathcal{G}_k := \left\{ (h, \sigma_p, \sigma_q) \in \mathcal{H} \times \gr_k(TM) \times \gr_{n-k}(TM) \ : \ h(p) = q \right\}.
$$
Clearly $\mathcal{G}_k$ is a $C^1$ submanifold of $\mathcal{H} \times \gr_k(TM) \times \gr_{n-k}(TM)$, and indeed the map $\text{id} \times p_k \times p_{n-k}$ exhibits it as a $C^1$ $(\gr_k(\RR^n) \times \gr_{n-k}(\RR^n))$-bundle over $\widetilde{\mathcal{V}}$. (In particular, $\mathcal{G}_0 \cong \mathcal{G}_n \cong \widetilde{\mathcal{V}}$.)

\begin{claim} \label{claim:grasssub}
The map $\wh{\pi}^k_2 : \mathcal{G}_k \to \gr_k(TM) \times \gr_{n-k}(TM)$, $(h, \sigma_1, \sigma_2) \mapsto (\sigma_1, \sigma_2)$ is a $C^1$ submersion and surjective when restricted to $\ucirc{\mathcal{G}}_k$. 
\end{claim}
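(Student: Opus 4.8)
The plan is to show that the map $\wh{\pi}^k_2$ is fibered over the analogous map $\tilde{\pi}_2 : \wt{\mc{V}} \to M \times M$ in a way that reduces both claims to properties of $\tilde{\pi}_2$ that we have already established, namely that $\tilde{\pi}_2|_{\ucirc{\wt{\mc{V}}}}$ is a surjective submersion by (A1) and the $k=0$ case of (A2). Concretely, there is a commuting square in which the vertical maps are the bundle projections $\mc{G}_k \to \wt{\mc{V}}$ (induced by $\id \times p_k \times p_{n-k}$) and $\gr_k(TM) \times \gr_{n-k}(TM) \to M \times M$ (the product of the Grassmannian bundle projections), and the horizontal maps are $\wh{\pi}^k_2$ and $\tilde{\pi}_2$. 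First I would use this square to check that $\wh{\pi}^k_2$ is a submersion at any point $(h,\sigma_p,\sigma_q)$: since $\tilde{\pi}_2$ is a submersion at $(h,p,q)$, any tangent vector downstairs lifts to a tangent vector at $(h,p,q) \in \wt{\mc{V}}$; then one lifts \emph{that} vector along the fiber bundle $\mc{G}_k \to \wt{\mc{V}}$ using the fact that the differential of a fiber bundle projection is surjective, matching the prescribed Grassmannian components of the target vector in the fibers. Keeping careful track shows that $d(\wh{\pi}^k_2)$ is onto at every point of $\mc{G}_k$.

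Next I would address surjectivity of $\wh{\pi}^k_2$ restricted to $\ucirc{\mc{G}}_k$. Given $(\sigma_p, \sigma_q) \in \gr_k(TM) \times \gr_{n-k}(TM)$ with base points $p, q$, I want $h \in \ucirc{\mc{H}}$ with $h(p) = q$; this is exactly what the $k=0$ case of (A2) gives (the interior of $\mc{H}$ acts transitively on $M = \gr_0(TM)$), so $(h, \sigma_p, \sigma_q) \in \ucirc{\mc{G}}_k$ maps to $(\sigma_p, \sigma_q)$. One small point to verify is that $(h,\sigma_p,\sigma_q)$ really lies in the \emph{interior} of $\mc{G}_k$, not just in $\mc{G}_k$; this follows because $\mc{G}_k$ is a fiber bundle over $\wt{\mc{V}}$ whose fibers $\gr_k(\RR^n) \times \gr_{n-k}(\RR^n)$ are closed manifolds (no boundary), so $\bd \mc{G}_k$ is the preimage of $\bd \wt{\mc{V}}$, hence $\ucirc{\mc{G}}_k$ is the preimage of $\ucirc{\wt{\mc{V}}}$, and $h \in \ucirc{\mc{H}}$ guarantees $(h,p,q) \in \ucirc{\wt{\mc{V}}}$ by the analogous statement for $\wt{\mc{V}}$.

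The routine parts are: verifying the square actually commutes (immediate from the definitions), and confirming that $\mc{G}_k$ is a $C^1$ submanifold and a locally trivial bundle over $\wt{\mc{V}}$ — but this is already asserted in the text just above the claim, so I can simply cite it. I expect the only mildly delicate step to be the submersion verification, specifically organizing the two-stage lifting (first horizontally across $\tilde{\pi}_2$, then vertically along the bundle projection) so that the Grassmannian-fiber components of the target vector are correctly realized; once one writes $T_{(h,\sigma_p,\sigma_q)}\mc{G}_k$ as an extension of $T_{(h,p,q)}\wt{\mc{V}}$ by the tangent space to the fiber, this becomes a short diagram chase. A single remark at the end can note that this same fibered structure is what underlies the continuity statements for $\nj(\pi_1), \nj(\pi_2)$ promised at the close of the previous subsection.
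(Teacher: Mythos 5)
Your proposal is correct and follows essentially the same route as the paper: both exploit the commuting square exhibiting $\mc{G}_k$ as the pullback of $\gr_k(TM) \times \gr_{n-k}(TM) \to M \times M$ along $\tilde{\pi}_2$, reduce the submersion property to that of $\tilde{\pi}_2$, and get surjectivity on $\ucirc{\mc{G}}_k$ from the $k=0$ case of (A2). The only difference is cosmetic --- the paper verifies the submersion claim by writing $\wh{\pi}^k_2$ in a local trivialization as $\tilde{\pi}_2 \times \id$, whereas you run the equivalent tangent-vector diagram chase (and you additionally spell out the identification of $\ucirc{\mc{G}}_k$ with the preimage of $\ucirc{\mc{H}}$, which the paper leaves implicit).
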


\begin{proof} This follows from the observation that $\mathcal{G}_k$ can be obtained as the pullback of the smooth fiber bundle $\gr_k(TM) \times \gr_{n-k}(TM) \to M \times M$  via the $C^1$ submersion $\tilde{\pi}_2 : \widetilde{\mathcal{V}} \to M \times M$. More precisely, we have the following diagram: 
\begin{center}
\begin{tikzcd}
 \mathcal{G}_k \arrow{dd}[left]{\text{id} \times p_k \times p_{n-k}} \arrow{rr}[above]{\wh{\pi}^k_2} & & \arrow{dd}[right]{p_k \times p_{n-k}} \gr_k(TM) \times \gr_{n-k}(TM) \\
  & & \\
\widetilde{\mathcal{V}} \arrow{rr}[below]{\tilde{\pi}_2} & & M \times M.
\end{tikzcd}
\end{center}
In local coordinates (i.e. above any chart $U \subset \widetilde{\mathcal{V}}$), $\hat{\pi}_k$ has the form 
$$
\hat{\pi}_k \ \cong \ \tilde{\pi}_2 \times \text{id} \ : \ U \times \left( \gr_k(\RR^n) \times \gr_{n-k}(\RR^n) \right) \ \longrightarrow \ \tilde{\pi}_2(U) \times \left( \gr_k(\RR^n) \times \gr_{n-k}(\RR^n) \right),
$$
which is clearly a submersion because $\tilde{\pi}_2$ is. The surjectivity of $\left. \hat{\pi}_k \right|_{\ucirc{\mathcal{G}}_k}$ follows immediately from that of $ev\big|_{\ucirc{\mc{H}} \times M}$, i.e. the $k=0$ case of $\Atwo$. \end{proof}

Now for each $(h, \sigma_p, \sigma_q) \in \mathcal{G}_k$ we can define
\begin{align*}
J_{(h, \sigma_p, \sigma_q)} : \sigma_p \times \sigma_q & \to T_q M \\
(v, w) & \mapsto w - dh(v),
\end{align*}
and
$$
E_k(h,\sigma_p, \sigma_q) := \{  (\dot{h}, v, w) \in T_h \mc{H} \times \sigma_p \times \sigma_q \ : \ J_{(h, \sigma_p, \sigma_q)}(v,w) = d(ev_p)_h(\dot{h}) \}. 
$$
Finally, set
\begin{align*}
    \Pi_1(h,\sigma_p, \sigma_q) \ : E_k(h,\sigma_p,\sigma_q) &\too T_h \mc{H} \\
    (\dot{h}, v, w) &\mapstoo \dot{h}, \\
    \Pi_2(h,\sigma_p,\sigma_q) \ : E_k(h,\sigma_p,\sigma_q) &\too \sigma_p \times \sigma_q \\
    (\dot{h}, v, w) &\mapstoo (v, w),
\end{align*}
and for $i=1,2$, define
$$
\eta_i : \mc{G}_k \too \RR_{\geq 0}
$$
by 
$$
\eta_i(h,\sigma_p,\sigma_q) = \nj(\Pi_i(h,\sigma_p,\sigma_q)). \vspace{2mm}
$$

\noindent The point of these objects, immediate from the definitions and Claim \ref{claim:tangentspace}, is the following: 

\begin{lem} \label{lem:etatonj}
    Let $V, W \subset M$ be complementary dimensional submanifolds with $\dim(V) = k$, and $\mc{V}$ their associated intersection manifold. Then for all $(h,p,q) \in \mc{V}$ and $i = 1, 2$ we have
    \begin{align*}
    T_{(h,p,q)} \mc{V} &= E_k(h,T_p V, T_q W); \\[2mm]
    d(\pi_i)_{(h, p, q)} &= \Pi_i(h, T_p V, T_q W); \\[2mm]
    \nj(\pi_i)_{(h,p,q)} &= \eta_i(h, T_p V, T_q W). 
    \end{align*}
    
\end{lem}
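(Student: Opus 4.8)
The statement is essentially a bookkeeping identity: it asserts that the abstractly-defined bundle data $E_k, \Pi_i, \eta_i$ over $\mc{G}_k$, when pulled back along the inclusion $\mc{V} \hookrightarrow \mc{G}_k$ induced by $(h,p,q) \mapsto (h, T_p V, T_q W)$, recovers the tangent spaces and differentials of the projections $\pi_i : \mc{V} \to \mc{H}$ and $\pi_i : \mc{V} \to V \times W$ respectively. My plan is to prove the three equalities in the order listed, since each feeds into the next.

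\emph{First}, I would establish the identity $T_{(h,p,q)}\mc{V} = E_k(h, T_p V, T_q W)$. Here one simply compares definitions: Claim \ref{claim:tangentspace} gives
$$
T_{(h,p,q)}\mc{V} = \{(\dot h, \dot p, \dot q) \in T_h\mc{H} \times T_p V \times T_q W : J_{(h, T_p V, T_q W)}(\dot p, \dot q) = d(ev_p)_h(\dot h)\},
$$
where $J_{(h,T_p V, T_q W)}(v,w) = w - dh(v)$, while the definition of $E_k$ in \S\ref{sec:bundles} reads
$$
E_k(h,\sigma_p,\sigma_q) = \{(\dot h, v, w) \in T_h\mc{H} \times \sigma_p \times \sigma_q : J_{(h,\sigma_p,\sigma_q)}(v,w) = d(ev_p)_h(\dot h)\},
$$
with the \emph{same} map $J$. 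Substituting $\sigma_p = T_p V$ and $\sigma_q = T_q W$ makes the two defining conditions literally identical, so the sets coincide. (One small point worth a sentence: the ambient product $T_h\mc{H} \times T_p V \times T_q W$ is the same in both cases, since $\sigma_p = T_p V \subset T_p M$ and $\sigma_q = T_q W \subset T_q M$ as inner product subspaces, so there is no discrepancy in the codomain of $J$ either.)

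\emph{Second}, given the equality of tangent spaces, I would verify $d(\pi_i)_{(h,p,q)} = \Pi_i(h, T_p V, T_q W)$. By construction $\pi_i$ is the restriction to $\mc{V}$ of the ambient projection $\wt\pi_i$ (projection onto the $\mc{H}$-factor for $i=1$, onto the $M\times M$-factor for $i=2$), so $d(\pi_i)_{(h,p,q)}$ is just the restriction of the corresponding coordinate projection to $T_{(h,p,q)}\mc{V}$; explicitly $d(\pi_1)(\dot h, \dot p, \dot q) = \dot h$ and $d(\pi_2)(\dot h, \dot p, \dot q) = (\dot p, \dot q)$. On the other side, $\Pi_1(h,\sigma_p,\sigma_q)$ and $\Pi_2(h,\sigma_p,\sigma_q)$ are defined as exactly these coordinate projections restricted to $E_k(h,\sigma_p,\sigma_q)$. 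Combining with the first step ($T\mc{V} = E_k$), the two linear maps agree. I would also note that this requires matching the Riemannian (inner product) structures on source and target: the metric on $T_{(h,p,q)}\mc{V}$ is inherited from $T_h\mc{H}\times T_p M\times T_q M$ (restricted to the subspace), and $E_k(h,\sigma_p,\sigma_q)$ carries the metric inherited from $T_h\mc{H}\times\sigma_p\times\sigma_q$ — but $\sigma_p\times\sigma_q = T_pV\times T_qW$ sits isometrically inside $T_pM\times T_qM$, so these agree, and likewise the target metrics on $T_h\mc{H}$ and on $T_pV\times T_qW$ match. \emph{Third}, the normal Jacobian identity $\nj(\pi_i)_{(h,p,q)} = \eta_i(h,T_pV,T_qW)$ is then immediate: $\nj$ is a function of a linear map between inner product spaces alone (via $\nj(A) = \det(AA^*)^{1/2}$), so equality of the maps $d(\pi_i)_{(h,p,q)}$ and $\Pi_i(h,T_pV,T_qW)$ as maps of inner product spaces forces equality of their normal Jacobians, and $\eta_i$ is by definition $\nj\circ\Pi_i$.

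\emph{Expected obstacle.} There is no real mathematical obstacle — the proof is a matter of unwinding definitions and the work was already done in Claim \ref{claim:tangentspace}. The only thing requiring care, and the step I would spell out most explicitly, is the compatibility of inner product structures in the second and third steps, since $\nj$ is sensitive to the choice of metric; one must be sure that identifying $T_pV$ with the ``abstract'' factor $\sigma_p$ (and similarly $T_qW$ with $\sigma_q$, and the $\mc{H}$-factors) is an isometry, which it is, being induced by inclusions of Riemannian submanifolds. Beyond that, the proof is a one-line appeal to Claim \ref{claim:tangentspace} plus the definitions.
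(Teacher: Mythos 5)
Your proof is correct and matches the paper's (implicit) argument exactly: the paper states this lemma as ``immediate from the definitions and Claim \ref{claim:tangentspace},'' which is precisely the definition-unwinding you carry out, with the added (and welcome) care about the compatibility of the inner product structures needed for the normal Jacobian identity.
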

\vs
\noindent We will also need the following properties: 

\begin{claim}
    $E_k$ is a continuous vector bundle over $\mc{G}_k$ of rank $d$.
\end{claim}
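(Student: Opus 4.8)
The plan is to exhibit $E_k$ as the kernel of an explicit continuous bundle map over $\mc{G}_k$, and then use the fact that the kernel of a continuous bundle map of constant rank is a continuous subbundle.

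First I would assemble the relevant ambient bundles over $\mc{G}_k$. The most natural one is the rank-$(d+n)$ bundle $\mc{E}$ whose fiber at $(h,\sigma_p,\sigma_q)$ is $T_h\mc{H} \times \sigma_p \times \sigma_q$; this is the pullback of $T\mc{H} \boxtimes \gamma_k \boxtimes \gamma_{n-k}$ (where $\gamma_j$ is the tautological bundle over $\gr_j(TM)$) along the obvious $C^1$ maps out of $\mc{G}_k$, hence a genuine $C^0$ (indeed $C^1$-compatible) vector bundle of rank $d + k + (n-k) = d+n$. Likewise $q \mapsto T_q M$, more precisely the pullback of $TM$ along $(h,\sigma_p,\sigma_q) \mapsto q = p_{n-k}(\sigma_q)$, is a continuous rank-$n$ bundle $\mc{T}$ over $\mc{G}_k$. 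Then I would define the bundle morphism
\begin{equation*}
\Phi : \mc{E} \too \mc{T}, \qquad \Phi_{(h,\sigma_p,\sigma_q)}(\dot h, v, w) = J_{(h,\sigma_p,\sigma_q)}(v,w) - d(ev_p)_h(\dot h) = w - dh(v) - d(ev_p)_h(\dot h),
\end{equation*}
so that by definition $E_k(h,\sigma_p,\sigma_q) = \ker \Phi_{(h,\sigma_p,\sigma_q)}$. The first key step is to check that $\Phi$ is continuous: this reduces to the continuity of $(h,\sigma_q) \mapsto$ (inclusion $\sigma_q \hookrightarrow T_q M$), of $(h,\sigma_p) \mapsto (v \mapsto dh(v))$ as a map $\gamma_k \to \mc{T}$ — which follows because $ev_\psi$ is $C^1$, so $dh$ depends continuously on $h$, and the tautological bundle is continuous — and of $(h,p) \mapsto d(ev_p)_h = \partial_h(ev_\psi)_{(h,p)}$, again from $C^1$-ness of $ev_\psi$.

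The second key step is to show $\Phi$ has constant rank, equivalently that $\dim \ker \Phi_{(h,\sigma_p,\sigma_q)} \equiv d$. This is exactly where assumption $\Aone$ enters. Since $d(ev_p)_h : T_h\mc{H} \to T_q M$ is surjective for every $(h,p)$ by $\Aone$, the map $\Phi_{(h,\sigma_p,\sigma_q)}$ is surjective onto $T_q M$ (already the $\dot h$-component alone surjects), so $\rank \Phi_{(h,\sigma_p,\sigma_q)} = n$ identically and $\dim \ker = (d+n) - n = d$. Once constant rank is established, I would invoke the standard linear-algebra-plus-local-triviality fact that a continuous vector bundle map of locally constant rank has kernel a continuous subbundle (one can write the kernel locally as the graph of a continuous map after choosing continuous local frames and doing Gaussian elimination, the pivot structure being locally constant by constant rank; alternatively cite that $\ker$ and $\im$ of a constant-rank continuous morphism are subbundles). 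This yields that $E_k$ is a continuous rank-$d$ subbundle of $\mc{E}$, in particular a continuous vector bundle over $\mc{G}_k$ of rank $d$, as claimed.

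I expect the only real subtlety — the main obstacle, such as it is — to be the bookkeeping of \emph{which} category the ambient bundles live in and verifying continuity of $\Phi$ cleanly, since $\mc{G}_k$ is only $C^1$ and the tautological Grassmannian bundles need to be pulled back and tensored; but none of this is hard, it just needs to be stated carefully. The constant-rank claim, by contrast, is immediate from $\Aone$ as above, and the passage from constant-rank morphism to subbundle is entirely standard. It is worth remarking that this also reproves, bundle-theoretically, that each $E_k(h,\sigma_p,\sigma_q)$ has dimension $d$, consistent with $\dim \mc{V} = \dim \mc{H} = d$ from Lemma \ref{lem:etatonj} and \S\ref{sec:solmfld}.
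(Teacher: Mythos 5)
Your argument is correct and is essentially the paper's own proof: you form the same ambient bundle (the restriction of $T\mc{H}\times\Omega_k\times\Omega_{n-k}$ to $\mc{G}_k$), the same bundle map (your $\Phi$ is the paper's $L=J-H$), deduce pointwise surjectivity from $\Aone$ via the $\dot h$-component, and conclude via the standard fact that the kernel of a constant-rank continuous morphism is a continuous subbundle. The only quibble is notational: you want the external direct sum of $T\mc{H}$, $\gamma_k$, $\gamma_{n-k}$, not the external tensor product that $\boxtimes$ usually denotes, as your own rank count $d+k+(n-k)$ makes clear.
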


\begin{proof} Let $\Omega_j$ denote the tautological vector bundle over $\gr_j(TM)$, i.e. the smooth rank-$j$ vector bundle $\Omega_j \to \gr_j(TM)$ with fibers $\Omega_j(\sigma) = \sigma$. Observe that, since each $\sigma \in \gr_j(TM)$ is a subspace of some tangent space of $M$, $\Omega_j$ carries a natural bundle metric obtained by restricting the Riemannian metric on $M$. 

Let $\mathcal{E}_k$ denote the restriction of the vector bundle $T\mathcal{H} \times \Omega_k \times \Omega_{n-k} \to \mc{H} \times \gr_k(TM) \times \gr_{n-k}(TM)$ to $\mc{G}_k$. Again this carries a natural bundle metric, namely the product metric induced from its factors. 

We can now view $J$ as a bundle map
\begin{align*}
J : \mathcal{E}_k \too \gamma^* TM,
\end{align*}
where $\gamma : \mc{G}_k \to M$ is given by $\gamma(h,\sigma_p,\sigma_q) = q$. Continuity of $J$ follows from that of our parametrization $\psi : \mc{H} \to \Diffone(M)$. Define
$$
H : \mathcal{E}_k \too \gamma^* TM
$$
by
\begin{align*}
 H_{(h, \sigma_p, \sigma_q)} : T_h \mc{H} \times \sigma_p \times \sigma_q &\too T_q M \\
(\dot{h}, v, w) &\mapstoo d(ev_p)_h(\dot{h}), 
\end{align*}
and note that $H$ is a continuous bundle map because $ev$ is $C^1$. Finally set $L := J - H$. 

Since $d(ev_p)_h$ is surjective for all $(h,p) \in \mc{H} \times M$ and $J_{(h,\sigma_p,\sigma_q)}$ does not depend on $\dot{h}$, $L_{(h, \sigma_p, \sigma_q)}$ is surjective for all $(h, \sigma_p, \sigma_q) \in \mc{G}_k$. In particular, $L$ is a constant rank vector bundle map, so $\ker(L)$ is a continuous vector subbundle of $\mc{E}_k$ (see e.g. \cite[Theorem 10.34]{lee}), with 
$$
\rank(\ker(L)) = \rank(\mc{E}_k) - \rank(TM) = (d+n) - n = d.
$$
Since clearly $\ker(L) = E_k$, we are done. \end{proof}

\begin{claim} \label{claim:etacont}
    $\eta_1$ and $\eta_2$ are continuous. 
\end{claim}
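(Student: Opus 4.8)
The plan is to interpret $\eta_1$ and $\eta_2$ as the normal Jacobians of two continuous bundle maps over $\mc{G}_k$, and then to invoke the general principle that the normal Jacobian of a continuous bundle map between continuous Riemannian vector bundles is a continuous function on the base. First I would upgrade $\Pi_1$ and $\Pi_2$ from fiberwise-defined linear maps to honest bundle maps. By the preceding claim, $E_k$ is a continuous rank-$d$ subbundle of $\mc{E}_k := (T\mc{H} \times \Omega_k \times \Omega_{n-k})|_{\mc{G}_k}$ — indeed $E_k = \ker(L)$ with $L = J - H$ of constant rank — so the inclusion $E_k \hookrightarrow \mc{E}_k$ is a continuous bundle map. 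Composing it with the smooth coordinate projections $\mc{E}_k \to \gamma_{\mc{H}}^* T\mc{H}$ and $\mc{E}_k \to (\Omega_k \times \Omega_{n-k})|_{\mc{G}_k}$, where $\gamma_{\mc{H}} : \mc{G}_k \to \mc{H}$ is $(h, \sigma_p, \sigma_q) \mapsto h$, yields continuous bundle maps $\Pi_1 : E_k \to \gamma_{\mc{H}}^* T\mc{H}$ and $\Pi_2 : E_k \to (\Omega_k \times \Omega_{n-k})|_{\mc{G}_k}$ covering the identity on $\mc{G}_k$, whose restriction to the fiber over a point $(h, \sigma_p, \sigma_q)$ is, by definition, the linear map $\Pi_i(h, \sigma_p, \sigma_q)$.

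Next, I would record that each bundle in sight carries a continuous bundle metric: the tautological bundles $\Omega_j$ inherit one from the Riemannian metric on $M$, $\gamma_{\mc{H}}^* T\mc{H}$ carries the pullback of the chosen metric on $\mc{H}$, and $E_k$ the restriction of the product metric on $\mc{E}_k$, exactly as in the previous proof. With these fixed, for any continuous bundle map $A : \mc{F}_1 \to \mc{F}_2$ of continuous Riemannian vector bundles over a base $X$, the adjoint $A^*$ is again a continuous bundle map: over a common trivializing chart, in continuously varying local frames $A$ is represented by a continuously varying matrix and $A^* = G_1^{-1} A^\top G_2$, where $G_1, G_2$ are the (continuous, pointwise-invertible) Gram matrices of the two metrics — a continuous expression. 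Hence $x \mapsto A_x A_x^*$ is a continuous endomorphism of $\mc{F}_2$, and $x \mapsto \nj(A_x) = \det(A_x A_x^*)^{1/2}$ is continuous on $X$, being a composition of $x \mapsto A_x$, $x \mapsto A_x^*$, matrix multiplication, $\det$, and the square root on $\RR_{\geq 0}$, all of which are continuous.

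Applying this with $A = \Pi_i$ for $i = 1, 2$, we conclude that $\eta_i = \nj(\Pi_i)$ is continuous on $\mc{G}_k$, as desired. I do not anticipate a genuine obstacle: the only slightly delicate point is the routine verification that passing to adjoints and then to normal Jacobians are continuous operations on continuous Riemannian bundle maps, and that is precisely where the continuous bundle metrics on $E_k$, $T\mc{H}$, $\Omega_k$, and $\Omega_{n-k}$ — assembled in the course of the previous claim — are used.
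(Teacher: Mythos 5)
Your proposal is correct and follows essentially the same route as the paper: the paper's proof likewise views $\Pi_1$ and $\Pi_2$ as continuous bundle maps $E_k \to (\wh{\pi}^k_1)^* T\mc{H}$ and $E_k \to (\wh{\pi}^k_2)^*(\Omega_k \times \Omega_{n-k})$ between metric vector bundles and concludes that their normal Jacobians are continuous functions of the base. You have simply filled in the local-frame/Gram-matrix verification that the paper leaves implicit.
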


\begin{proof}
    $\Pi_1$ and $\Pi_2$ can be viewed as continuous bundle maps between metric vector bundles, namely
    \begin{align*}
    &\Pi_1 : E_k \to (\wh{\pi}^k_1)^* T\mc{H}; \\
    &\Pi_2 : E_k \to (\wh{\pi}^k_2)^*(\Omega_k \times \Omega_{n-k}), 
    \end{align*}
    where $\wh{\pi}^k_1 : \mc{G}_k \to \mc{H}$ and $\wh{\pi}^k_2 : \mc{G}_k \to \gr_k(TM) \times \gr_{n-k}(TM)$ denote the natural projections. Thus their normal Jacobians are continuous functions of the base.
\end{proof}

\begin{claim} \label{claim:eta2pos}
    $\eta_2 > 0$ everywhere on $\mc{G}_k$. 
\end{claim}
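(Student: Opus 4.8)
The plan is to recognize that this positivity statement is essentially a restatement of (A1). Recall from the discussion of normal Jacobians that for a linear map $A$ between inner product spaces one has $\nj(A) > 0$ if and only if $A$ is surjective. Hence $\eta_2(h,\sigma_p,\sigma_q) = \nj(\Pi_2(h,\sigma_p,\sigma_q)) > 0$ precisely when the projection
$$
\Pi_2(h,\sigma_p,\sigma_q) : E_k(h,\sigma_p,\sigma_q) \too \sigma_p \times \sigma_q, \qquad (\dot h, v, w) \mapstoo (v,w)
$$
is surjective, and this is what I would verify for every $(h,\sigma_p,\sigma_q) \in \mc{G}_k$.

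So first I would fix a point $(h,\sigma_p,\sigma_q) \in \mc{G}_k$ (recall this carries the constraint $h(p) = q$) and an arbitrary target $(v,w) \in \sigma_p \times \sigma_q$. Then I would invoke (A1) in the pointwise form recorded immediately after the statement of Theorem~\ref{thm:main}: the map $d(ev_p)_h : T_h\mc{H} \to T_{h(p)} M = T_q M$ is surjective for every $(h,p) \in \mc{H}\times M$. Since $J_{(h,\sigma_p,\sigma_q)}(v,w) = w - dh(v)$ is an element of $T_q M$, surjectivity produces some $\dot h \in T_h\mc{H}$ with $d(ev_p)_h(\dot h) = J_{(h,\sigma_p,\sigma_q)}(v,w)$. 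By the defining equation of $E_k$ this says exactly that $(\dot h, v, w) \in E_k(h,\sigma_p,\sigma_q)$, and by construction $\Pi_2(h,\sigma_p,\sigma_q)(\dot h, v, w) = (v,w)$. As $(v,w)$ was arbitrary, $\Pi_2(h,\sigma_p,\sigma_q)$ is onto, so $\eta_2(h,\sigma_p,\sigma_q) > 0$; since the point was arbitrary, $\eta_2 > 0$ on all of $\mc{G}_k$.

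There is no substantive obstacle here: the entire content is unwinding the definition of $E_k$ and observing that the $\dot h$-coordinate is "free" in the defining equation precisely because $d(ev_p)_h$ is surjective. Combined with Lemma~\ref{lem:etatonj}, this recovers the fact already noted above that for complementary-dimensional $V,W$ the projection $\pi_2 : \mc{V} \to V\times W$ has everywhere-positive normal Jacobian (i.e.\ is a submersion). The contrast worth flagging is that the analogous positivity for $\eta_1$ — equivalently, that $\pi_1 : \mc{V}\to\mc{H}$ be a submersion — genuinely fails in general and is governed by the transversality condition $dh(\sigma_p) + \sigma_q = T_q M$; that is where (A2) (or the weaker $\Atwostar$) will be needed, and is the business of Claim~\ref{claim:eta1pos}.
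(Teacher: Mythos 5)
Your argument is correct and is just the fully-unwound version of the paper's own one-line proof (which simply asserts that $\Pi_2(h,\sigma_p,\sigma_q)$ has full rank): you verify surjectivity of $\Pi_2$ by using (A1) to solve $d(ev_p)_h(\dot h) = J_{(h,\sigma_p,\sigma_q)}(v,w)$ for $\dot h$, which is exactly the intended justification. No issues.
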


\begin{proof}
    $\Pi_2(h,\sigma_p,\sigma_q)$ has full rank for all $(h,\sigma_p,\sigma_q) \in \mc{G}_k$.
\end{proof}
\vs
Now define 
$$
\wh{\mc{F}}_{\sigma_p, \sigma_q} := \left( \wh{\pi}^k_2 |_{\ucirc{\mathcal{G}}_k} \right)^{-1}(\sigma_p, \sigma_q) = ( \wh{\pi}^k_2 )^{-1}(\sigma_p, \sigma_q) \cap \ucirc{\mc{G}}_k.
$$ 
Since $\wh{\pi}^k_2|_{\ucirc{\mathcal{G}}_k}$ is a surjective $C^1$ submersion, $\wh{\mc{F}}_{\sigma_p, \sigma_q}$ is an embedded $(d-n)$-dimensional submanifold of $\ucirc{\mc{G}}_k$ for all $(\sigma_p, \sigma_q) \in \gr_k(TM) \times \gr_{n-k}(TM)$. As usual we let $d\wh{\mc{F}}_{\sigma_p, \sigma_q}$ denote the Riemannian density induced by the restriction of the metric on $\mc{G}_k$ to $\wh{\mc{F}}_{\sigma_p, \sigma_q}$. 

\begin{claim} \label{claim:eta1pos}
     For all $(\sigma_p, \sigma_q) \in \gr_k(TM) \times \gr_{n-k}(TM)$, $\eta_1$ is positive somewhere on the fiber $\wh{\mc{F}}_{\sigma_p, \sigma_q}$.
\end{claim}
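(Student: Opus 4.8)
The plan is to show that $\eta_1$ cannot vanish identically on $\wh{\mc{F}}_{\sigma_p,\sigma_q}$ by exhibiting a single point of the fiber where $\Pi_1$ is surjective. By the discussion preceding Proposition \ref{prop:pi1critpts} (or directly from the definitions of $E_k$ and $\Pi_1$), the map $\Pi_1(h,\sigma_p,\sigma_q)$ is surjective if and only if the linear map $J_{(h,\sigma_p,\sigma_q)} : \sigma_p \times \sigma_q \to T_q M$ is surjective; and since $\dim \sigma_p + \dim \sigma_q = k + (n-k) = n = \dim T_q M$, this in turn is equivalent to $\sigma_p' := dh(\sigma_p)$ and $\sigma_q$ being transverse in $T_q M$, i.e. $dh(\sigma_p) \cap \sigma_q = 0$, i.e. $dh(\sigma_p) \oplus \sigma_q = T_q M$. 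So it suffices to find $h \in \ucirc{\mc{H}}$ with $h(p) = q$ and $dh(\sigma_p) \pitchfork \sigma_q$ (as subspaces of $T_q M$) — exactly the condition $\Atwostar$ flagged in \S\ref{sec:ass}.

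First I would fix any $\sigma_q' \in \gr_k(T_q M)$ that is transverse to $\sigma_q$ (such a complement always exists). Then I would invoke assumption $\Atwo$ in the case of $k$-planes: it guarantees an $h \in \ucirc{\mc{H}}$ with $dh(\sigma_p) = \sigma_q'$. For this $h$ we automatically have $h(p) = q$ (since $dh$ carries $\sigma_p \in \gr_k(T_pM)$ to an element of $\gr_k(T_qM)$, the base point must map correctly), so the triple $(h,\sigma_p,\sigma_q)$ lies in $\ucirc{\mc{G}}_k$ and in fact in $\wh{\mc{F}}_{\sigma_p,\sigma_q}$. Since $dh(\sigma_p) = \sigma_q'$ is transverse to $\sigma_q$, the observation above gives that $\Pi_1(h,\sigma_p,\sigma_q)$ is surjective, hence $\eta_1(h,\sigma_p,\sigma_q) = \nj(\Pi_1(h,\sigma_p,\sigma_q)) > 0$, which is what we want.

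There is essentially no hard step here — the content is entirely in unwinding the definitions and citing the transitivity hypothesis — but the one point that deserves care is the claim that $\Pi_1$ surjective $\iff$ $J$ surjective $\iff$ $dh(\sigma_p) \pitchfork \sigma_q$. The first equivalence is because, by definition of $E_k$ as the fiber product and by the surjectivity of $d(ev_p)_h$ (assumption $\Aone$), the projection $\Pi_1 : E_k \to T_h\mc{H}$ hits $\dot h$ precisely when $d(ev_p)_h(\dot h)$ lies in the image of $J_{(h,\sigma_p,\sigma_q)}$; since $d(ev_p)_h$ is onto, $\Pi_1$ is onto iff $J$ is onto. The second equivalence is a dimension count: $\im J = dh(\sigma_p) + \sigma_q$, and equality with $T_qM$ for subspaces of complementary dimension $k$ and $n-k$ is exactly transversality. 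This is precisely the remark the authors promise after the claim statement, namely that $\Atwostar$ is all one really needs; I would include a sentence noting that $\Atwo$ is used here only through this consequence.

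\medskip

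\noindent\emph{(Remark on where $\Atwo$ versus $\Atwostar$ enters:)} The proof above uses the full strength of $\Atwo$ — transitivity on $\gr_k(TM)$ — only to produce, for the given $\sigma_p$ and an arbitrary chosen transverse complement $\sigma_q'$ of $\sigma_q$, some $h \in \ucirc{\mc{H}}$ with $dh(\sigma_p) = \sigma_q'$. Replacing this step by a direct appeal to $\Atwostar$ (which simply asserts the existence of $h \in \ucirc{\mc{H}}$ with $h(p) = q$ and $dh(\sigma_p) \pitchfork \sigma_q$) yields the conclusion with no other change, which is the sense in which $\Atwostar$ suffices for Theorem \ref{thm:main}.
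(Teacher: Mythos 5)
Your proposal is correct and follows essentially the same route as the paper: reduce to finding $h \in \ucirc{\mc{H}}$ with $h(p)=q$ and $dh(\sigma_p) \pitchfork \sigma_q$, then invoke $\Atwo$ to move $\sigma_p$ onto a transverse complement of $\sigma_q$ (the paper just takes the specific choice $\sigma_q^\perp$ where you allow an arbitrary one). Your extra unwinding of why $\Pi_1$ surjective $\iff$ $J$ surjective $\iff$ transversality matches the paper's appeal to Proposition \ref{prop:pi1critpts}, and your remark on $\Atwostar$ reproduces the remark the authors make immediately after the claim.
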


\begin{proof}
Let $(\sigma_p, \sigma_q) \in \gr_k(TM) \times \gr_{n-k}(TM)$. Clearly it suffices to find $h \in \ucirc{\mc{H}}$ such that $h(p) = q$ and $\Pi_1(h, \sigma_p, \sigma_q)$ is surjective. Observe as in Proposition \ref{prop:pi1critpts} that $\Pi_1(h, \sigma_p, \sigma_q)$ is surjective iff $dh(\sigma_p) \pitchfork \sigma_q$. But by (A2) we can find $h \in \ucirc{\mc{H}}$ such that $dh(\sigma_p) = {\sigma_q}^\perp$, so we are done. 
\end{proof}

\begin{remark}
Claim \ref{claim:eta1pos} is in fact the only place where (A2) is used, and its proof makes clear that this assumption is actually stronger than required for this result. What we really need is the following: 
\begin{itemize}[label=$\operatorname{(A2^*)}$]
\item For any $\sigma_p \in \gr_k(T M)$ and $\sigma_q \in \gr_{n-k}(TM)$, there is an $h \in \ucirc{\mathcal{H}}$ such that $h(p) = q$ and $dh(\sigma_p) \pitchfork \sigma_q$.
\end{itemize}
So for instance (A2), which is equivalent to the claim that for all $\sigma \in \gr_k(TM)$ and $q \in M$ we have
$$
\gr_k(T_q M) \subset \{  dh(\sigma) \ : \ h \in \ucirc{\mathcal{H}}  \}, 
$$
could be replaced by the requirement that for all $\sigma \in \gr_k(TM)$ and $q \in M$, the set 
$$
\gr_k(T_q M) \cap \{  dh(\sigma) \ : \ h \in \ucirc{\mathcal{H}}  \}
$$
has nonempty interior in $\gr_k(T_q M)$.\footnote{This latter property is satisfied, in particular, if we assume that $\ucirc{ev}_0$ is surjective (i.e. the interior of $\mc{H}$ acts transitively on points) and $ev_k$ is a submersion for all $k$.} We prefer (A2) here largely for simplicity, and because, in the general setting we consider, it is no more difficult to construct a family $\mc{H}$ satisfying this hypothesis than $\operatorname{(A2^*)}$; see \S \ref{sec:construction}. The sufficiency of the weaker assumption may, however, be useful in applications where $\operatorname{(A2^*)}$ is easy to verify for a given family $\mc{H}$, despite additional constraints which render (A2) unachievable. 
\end{remark}

The main result of this section is now an immediate consequence of the following general fact. For details of the proof, see Appendix A.  

\renewcommand{\proofname}{Sketch of proof}

\begin{prop}[Appendix A, Lemma \ref{lem:appfiberbounds}] \label{prop:boundsprop} Let $f : M^m \to N^n$ be a $C^1$ submersion between compact Riemannian manifolds, with $\bd M \neq \varnothing$ and $\bd N = \varnothing$, and $\phi \in C^0(M, \RR_{\geq 0})$. Write $\mathcal{F}_q := f^{-1}(q) \cap \ucirc{M}$. Then there exists $C_U^\phi > 0$ such that 
$$
\ds\int_{\mathcal{F}_q} \phi \ d\mathcal{F}_q \leq C_U^\phi
$$
for all $q \in N$.

Suppose in addition that $f\big|_{\ucirc{M}}$ is surjective and $\phi$ is positive somewhere on each interior fiber $\mc{F}_q$. Then there also exists $C_L^\phi > 0$ such that 
$$
C_L^\phi \leq \ds\int_{\mathcal{F}_q} \phi \ d\mathcal{F}_q
$$
for all $q \in N$.

\end{prop}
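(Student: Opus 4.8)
The plan is to prove Proposition \ref{prop:boundsprop} (= Lemma \ref{lem:appfiberbounds}) by reducing the statement about fiber integrals to a local statement near each point of $N$, using compactness of $N$ to patch, and using the hypothesis $\bd N = \varnothing$ together with a careful treatment of $\bd M$ to control what happens on the boundary. The key technical point is that, because $\bd M$ may be nonempty, $f$ need not be a locally trivial fibration in the sense of Ehresmann; but for each $q \in N$ we \emph{can} choose a chart $U \ni q$ diffeomorphic to a ball over which $f$ admits a $C^1$ "sub-trivialization": since $f$ is a submersion and $M$ is compact (so $f$ is proper), one can show that $f^{-1}(U) \cong U \times F$ for a compact manifold-with-boundary $F$, with the fiberwise boundary $\bd(U \times F) = U \times \bd F$ matching $\bd M \cap f^{-1}(U)$ — this is the content of the "basic lemmas concerning submersions from compact manifolds with boundary" the authors defer to Appendix A. Granting such a local trivialization (the real work, done elsewhere in the appendix), the interior fibers $\mc{F}_q = f^{-1}(q) \cap \ucirc M$ correspond to $\{q\} \times \ucirc F$ and vary $C^1$-continuously in $q$, with uniformly bounded volume.

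Here is how I would organize the argument. First, fix $q_0 \in N$ and a trivializing chart $U$ as above, with trivialization $\Phi : f^{-1}(U) \iso U \times F$. The Riemannian metric on $M$ pulls back under $\Phi^{-1}$ to a $C^0$ family of Riemannian metrics $g_u$ on $F$ (depending continuously on $u \in U$), and the density on the interior fiber $\mc{F}_u$ is the Riemannian density of $(\ucirc F, g_u)$ transported back. The function $\phi$ likewise pulls back to a continuous function $\wt\phi$ on $\ov U \times F$. Then
$$
\int_{\mc{F}_u} \phi \, d\mc{F}_u = \int_{\ucirc F} \wt\phi(u, \cdot) \, d\vol_{g_u}.
$$
Second, for the upper bound: shrink $U$ to a compact neighborhood $\ov{U'} \subset U$ of $q_0$. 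On $\ov{U'} \times \ov F$ the function $\wt\phi$ is bounded by some $C_1$, and the densities $d\vol_{g_u}$ are bounded above by $C_2 \cdot d\vol_{g_{q_0}}$ uniformly in $u \in \ov{U'}$ (by continuity and compactness, the metrics $g_u$ are uniformly comparable), so the integral is at most $C_1 C_2 \vol_{g_{q_0}}(F) < \infty$. By compactness of $N$, finitely many such $U'$ cover $N$; take $C_U^\phi$ to be the max of the resulting bounds.

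Third, for the lower bound, assume $f|_{\ucirc M}$ surjective and $\phi > 0$ somewhere on each $\mc{F}_q$. The surjectivity of $f|_{\ucirc M}$ guarantees $\ucirc F \neq \varnothing$ in each local model, so the fibers we integrate over are genuinely nonempty open manifolds. The positivity hypothesis gives, for each $q$, a point $x_q \in \mc{F}_q$ with $\phi(x_q) > 0$; by continuity of $\phi$ there is a small ball $B_q \subset \mc{F}_q$ on which $\phi \geq \phi(x_q)/2 > 0$, contributing a positive amount to $\int_{\mc{F}_q}\phi\,d\mc{F}_q$. The subtlety is uniformity: I would argue that the map $q \mapsto \int_{\mc{F}_q} \phi \, d\mc{F}_q$ is itself \emph{continuous} on $N$ — this follows from the local trivialization picture and dominated convergence (the integrands $\wt\phi(u,\cdot)\,d\vol_{g_u}$ converge uniformly on $\ov F$ as $u \to q$, and one must check that no mass escapes to $\bd F$, which holds because $\wt\phi$ extends continuously to $\ov U \times \ov F$ and the densities extend continuously too, so $\int_{\ucirc F}$ differs from $\int_{\ov F}$ only by the measure-zero set $\bd F$) — together with positivity of this continuous function at every point and compactness of $N$, giving $C_L^\phi := \min_{q \in N} \int_{\mc{F}_q}\phi\,d\mc{F}_q > 0$.

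The main obstacle is precisely the local trivialization over the boundary: making rigorous the claim that a proper $C^1$ submersion $f : M \to N$ from a compact manifold-with-boundary to a boundaryless one is, locally over $N$, a product $U \times F$ with $F$ a compact manifold-with-boundary and the boundary structures matching up (equivalently, a relative/neat version of Ehresmann's theorem). Once that is in hand, everything else is a routine exercise in uniform comparison of metrics over a compact parameter set plus a dominated-convergence continuity argument; the only place demanding care is verifying that integrating over the \emph{interior} fiber $\ucirc F$ rather than the closed fiber $\ov F$ costs nothing, which is immediate since $\bd F$ has zero $d\vol_{g_u}$-measure and $\wt\phi$ is bounded near it.
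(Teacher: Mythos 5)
Your argument rests on a local trivialization $f^{-1}(U) \cong U \times F$, with $F$ a fixed compact manifold-with-boundary and $\bd M \cap f^{-1}(U)$ corresponding to $U \times \bd F$ --- that is, on a relative Ehresmann theorem. This is false under the stated hypotheses: a proper $C^1$ submersion from a compact manifold \emph{with boundary} is in general not a locally trivial fibration, because nothing is assumed about the behavior of $f$ on $\bd M$ (in particular $f|_{\bd M}$ need not be a submersion, and the fibers need not meet $\bd M$ transversally). The standard counterexample is $M = \overline{\DD} \subset \RR^2$ with $f$ the orthogonal projection onto a line (composed with a covering $\RR \to \Sen$ if one wants $N$ closed): the fibers are intervals of varying length that degenerate to points, so their diffeomorphism type is not locally constant, and for other planar domains the fibers can fail to be manifolds at all. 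Your second load-bearing claim --- that $q \mapsto \int_{\mc{F}_q}\phi\, d\mc{F}_q$ is continuous, so the lower bound follows by minimizing over compact $N$ --- also fails in general; this function is only lower semicontinuous, and your proof of continuity goes through the nonexistent trivialization. (Lower semicontinuity plus positivity would in fact suffice for the lower bound, but you would need a different proof of it.)

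The statement is nevertheless true, and the correct substitute for trivializations is box (submersion) charts. For the upper bound: attach a collar to $M$, extend $f$ to a submersion $F$ on a slightly larger boundaryless manifold, and cover the compact set $M$ by finitely many precompact box charts for $F$; each fiber $F^{-1}(q)$ meets each chart in at most one plaque, of uniformly bounded volume, so $\vol(\mc{F}_q)$ is uniformly bounded and $\int_{\mc{F}_q}\phi\, d\mc{F}_q \leq \norm{\phi}_\infty \vol(\mc{F}_q)$. For the lower bound: surjectivity of $f|_{\ucirc{M}}$ and positivity of $\phi$ somewhere on each interior fiber let you choose, for each $q$, a box chart $B_q \subset \ucirc{M}$ meeting $\mc{F}_q$ with $\inf_{B_q}\phi > 0$; the open sets $f(B_q)$ cover $N$, finitely many suffice, and every fiber then contains a full plaque of one of these charts, on which $\phi \geq \delta$ and whose volume is bounded below. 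No continuity of the fiber integral, and no control of $f$ near $\bd M$, is needed.
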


\begin{proof}
    For the upper bound, it clearly suffices to show that $\vol_{m-n}(\mc{F}_q) \leq \wh{C}_U$ for some uniform $\wh{C}_U < +\infty$; then one can set $C_U^\phi := \wh{C}_U \cdot \norm{\phi}_\infty$. By attaching a collar to $M$ and extending the metric we obtain an open Riemannian manifold $\wh{M}$ isometrically containing $M$; by shrinking the collar we can assume that $f$ extends to a submersion $F : \wh{M} \to N$. Now choose a finite collection $\mc{B}$ of submersion charts for $F$ which cover $M$, such that the plaques of elements of $\mc{B}$ have volumes bounded uniformly above by some $C_1 < +\infty$. Since every fiber $\mc{F}_q$ intersects each submersion chart in at most one plaque, we get $\vol(\mc{F}_q) \leq C_1 \cdot \#(\mc{B})$.

    For the lower bound, one can use surjectivity of $f|_{\ucirc{M}}$ and compactness of $N$ to find finitely many submersion charts $B_1, \dots, B_\ell \subset \ucirc{M}$ for $f|_{\ucirc{M}}$ whose images cover $N$, with fiber plaque volumes bounded below by some $\wh{C}_L > 0$ and $\phi \geq \delta > 0$ on every $B_j$. Then each $\mc{F}_q$ contains at least one plaque of some $B_j$, so
    $$
    \ds\int_{\mc{F}_q} \phi d\mc{F}_q \geq \wh{C}_L \delta > 0, 
    $$
    as desired.
\end{proof}

\renewcommand{\proofname}{Proof}

\begin{cor} \label{cor:fibbounds}
Let $\eta = \frac{\eta_1}{\eta_2} : \mc{G}_k \too \RR_{\geq 0}$. Then there exists $C_k > 1$ such that
$$
\dfrac{1}{C_k} \leq \ds\int_{\wh{\mc{F}}_{\sigma_p, \sigma_q}} \eta \ d\wh{\mc{F}}_{\sigma_p, \sigma_q} \leq C_k
$$
for all $(\sigma_p, \sigma_q) \in \gr_k(TM) \times \gr_{n-k}(TM)$. 
\end{cor}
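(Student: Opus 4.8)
The plan is to obtain the corollary as a direct application of Proposition \ref{prop:boundsprop}, taking $f := \wh{\pi}^k_2 : \mc{G}_k \to N$ with $N := \gr_k(TM) \times \gr_{n-k}(TM)$ and test function $\phi := \eta = \eta_1/\eta_2$. Thus the work consists of checking that the hypotheses of that proposition hold in the present setting, and then collecting constants. First I would record the structural facts: $N$ is a compact manifold with $\bd N = \varnothing$ since $M$ is closed; $\mc{G}_k$ is a compact $C^1$ manifold with boundary, being a $(\gr_k(\RR^n) \times \gr_{n-k}(\RR^n))$-bundle (compact fiber) over $\wt{\mc{V}} \cong \mc{H} \times M$, with $\bd \mc{G}_k$ lying over $\bd \mc{H}$, so that $\ucirc{\mc{G}}_k$ is exactly the part over $\ucirc{\mc{H}}$ and the fibers $\wh{\mc{F}}_{\sigma_p,\sigma_q}$ are precisely the interior fibers ``$\mc{F}_q$'' of the proposition. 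That $\wh{\pi}^k_2$ is a $C^1$ submersion with $\wh{\pi}^k_2|_{\ucirc{\mc{G}}_k}$ surjective is Claim \ref{claim:grasssub}, and both spaces carry the Riemannian metrics (hence densities) fixed above.

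Next I would check the conditions on $\phi = \eta$. Continuity of $\eta_1$ and $\eta_2$ (Claim \ref{claim:etacont}) together with positivity of $\eta_2$ on all of $\mc{G}_k$ (Claim \ref{claim:eta2pos}) gives $\eta \in C^0(\mc{G}_k, \RR_{\geq 0})$, which is all that is needed for the upper bound. For the lower bound, Claim \ref{claim:eta1pos} provides, on each fiber $\wh{\mc{F}}_{\sigma_p,\sigma_q}$, a point where $\eta_1 > 0$; since $\eta_2 > 0$ everywhere, $\eta$ is positive there as well. Proposition \ref{prop:boundsprop} then yields constants $0 < C_L^\eta \le C_U^\eta < \infty$ with $C_L^\eta \le \int_{\wh{\mc{F}}_{\sigma_p,\sigma_q}} \eta \, d\wh{\mc{F}}_{\sigma_p,\sigma_q} \le C_U^\eta$ for every $(\sigma_p,\sigma_q)$, and we may set $C_k := \max\{\, C_U^\eta,\ 1/C_L^\eta,\ 2 \,\}$ to get the asserted two-sided bound with $C_k > 1$. (If $\bd\mc{H} = \varnothing$, hence $\bd\mc{G}_k = \varnothing$, Proposition \ref{prop:boundsprop} as stated does not literally apply, but then $\wh{\pi}^k_2$ is a proper submersion of boundaryless compact manifolds and the same two bounds follow from the boundaryless specialization of the Appendix A argument --- equivalently, from Ehresmann's theorem, compactness of $\mc{G}_k$, and the continuity and fiberwise positivity of $\eta$.)

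There is no genuine obstacle here beyond bookkeeping: all of the real analysis --- uniform two-sided control of $\int_{\mc{F}_q}\phi$ over the family of fibers of a submersion from a compact manifold with boundary --- has been isolated in Proposition \ref{prop:boundsprop} and proved in Appendix A. The one point deserving attention is that surjectivity is available only for $\wh{\pi}^k_2$ restricted to $\ucirc{\mc{G}}_k$, and $\eta_1$ is positive only \emph{somewhere} on each fiber rather than everywhere; so one must invoke precisely the interior-fiber version of the lower bound in Proposition \ref{prop:boundsprop}. This is exactly the mechanism by which $\Atwo$ (as opposed to a condition on all of $\mc{H}$) enters the argument, via Claim \ref{claim:eta1pos}.
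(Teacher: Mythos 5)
Your proposal is correct and is exactly the paper's argument: the corollary is obtained by applying Proposition \ref{prop:boundsprop} with $f = \wh{\pi}^k_2$ and $\phi = \eta$, with the hypotheses supplied by Claims \ref{claim:grasssub}, \ref{claim:etacont}, \ref{claim:eta2pos}, and \ref{claim:eta1pos}. Your extra bookkeeping (and the aside on the boundaryless case) is harmless elaboration of the same one-line deduction.
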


\begin{proof}
    By Claims \ref{claim:grasssub}, \ref{claim:etacont}, \ref{claim:eta2pos}, and \ref{claim:eta1pos}, this follows from Proposition \ref{prop:boundsprop} with $f = \wh{\pi}^k_2$ and $\phi = \eta$. 
\end{proof}

\begin{remark} 
    If we define 
    $$
    \Phi_{k} : \gr_k(TM) \times \gr_{n-k}(TM) \too \RR_{\geq 0}
    $$
    by 
    $$
    \Phi_k(\sigma_p, \sigma_q) = \ds\int_{\wh{\mc{F}}_{\sigma_p, \sigma_q}} \dfrac{\eta_1}{\eta_2} \ d\wh{\mc{F}}_{\sigma_p, \sigma_q},
    $$
    then Corollary \ref{cor:fibbounds} simply says that $\Phi_k$ is bounded uniformly above and away from zero. Since $\gr_j(TM)$ is compact for all $j$ and $\Phi_k > 0$ by Claim \ref{claim:eta1pos}, this would be obvious if $\Phi_k$ were continuous, and indeed when $\bd \mc{H} = \varnothing$ this is the case by an easy application of Ehresmann's fibration theorem. When $\bd \mc{H} \neq \varnothing$, however, Ehresmann's theorem fails, and in fact $\Phi_k$ cannot be assumed continuous in general. We were led to Proposition \ref{prop:boundsprop} by the need to fill this gap, since it shows in particular that the (non-)continuity of $\Phi_k$ is inessential for the desired bounds.\footnote{An easy adaptation of the proof of the lower bound in Proposition \ref{prop:boundsprop} does, however, show that $\Phi_k$ is always lower semicontinuous.}

    As a final aside, we note that Poincar\'{e}'s formula (as stated in Theorem \ref{thm:poincare}) reduces after the results of the next two sections to the claim that the functions $\Phi_{k}$ (and all the analogously defined functions $\Phi_{k,\ell}$ for $n \leq k+\ell \leq 2n$, where $\Phi_k =: \Phi_{k, n-k}$) are \emph{constant} in the homogeneous case, and moreover positive provided the induced action on each $\gr_k(TM)$ is transitive. This is in turn a fairly straightforward (intuitively almost obvious) consequence of some basic geometry of compact Lie groups. 
\end{remark}

\section{Proof of Main Theorem}

Let $V, W \subset M$ be complementary dimensional submanifolds of $M$, say $\dim(V) = k$ and $\dim(W) = n-k$. Our proof of Theorem \ref{thm:main} now consists of two steps. First, we convert the integral  
$$
\ds\int_\mathcal{H} \#(h(V) \cap W) \ d\mathcal{H}
$$
to an integral over $V \times W$. We then apply the results of \S \ref{sec:bundles} to bound the new integrand uniformly above and away from zero, proving the result.\footnote{For a related use of this ``double fibration" method, see \cite{armentano2022random}.}

\subsection{The co-area formula} \label{sec:co-area}

The key tool in the first step is the co-area formula, which we state here for convenience. Though we choose to give only the simplest form sufficient for our needs, note that this is a special case of a more general result; see e.g. \cite[\S 3.2]{fed}, \cite{nic}, \cite{sim}.

\begin{prop}[The co-area formula, $C^r$ case] \label{prop:co-area}
Let $f : M^{n+k} \to N^n$ be a $C^r$ map between smooth Riemannian manifolds (without boundary), with $r \geq k+1 \geq 1$. For $q \in N$, write $F_q = f^{-1}(q)$. Then for any Borel measurable function $h$ on $M$ we have
$$
\ds\int_M h \cdot \nj(f) \ dM = \ds\int_{q \in N} \left( \ds\int_{F_q} h \ dF_q \right) dN,
$$
with both sides finite if one is. In particular, if $h$ is (essentially) bounded and $\nj(f) \in L^1(M)$, then 
$$
\left( \Phi : q \longmapsto \ds\int_{F_q} h \ dF_q \right) \in L^1(N),
$$
and
$$
\ds\int_M h \cdot \nj(f) \ dM = \ds\int_{N} \Phi \ dN.
$$

In the special case when $f$ is a submersion, the above holds for all $k \geq 0$ assuming only that $f$ is $C^1$. 
\end{prop}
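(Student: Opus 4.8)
The plan is to reduce the statement to the classical Euclidean co-area formula for Lipschitz maps (\cite[\S 3.2]{fed}; see also \cite{nic}, \cite{sim}) by a localization argument, and then to reconcile the Euclidean and Riemannian densities by a pointwise Gram-determinant identity. The regularity hypothesis $r \geq k+1$ will enter only through Sard's theorem, which is vacuous when $f$ is a submersion --- this accounts for the final sentence of the statement.

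First I would reduce to $h \geq 0$: both sides are monotone and countably additive in $h$, so by monotone convergence it suffices to prove the identity in $[0, +\infty]$ for Borel $h \geq 0$. The clause ``both sides finite if one is'' then follows by applying this to $|h|$, and the $L^1$ conclusion for bounded $h$ with $\nj(f) \in L^1(M)$ is immediate, since in that case $\int_M |h|\, \nj(f)\, dM \leq \norm{h}_\infty \norm{\nj(f)}_{L^1} < \infty$. Next I would localize: around each point of $M$ pick a chart $U$ of $M$ together with a chart $V$ of $N$ containing $f(U)$ (possible since $f$ is continuous); a partition of unity subordinate to the resulting cover of $M$, combined with additivity in $h$, reduces us to the case of a $C^r$ map $f : U \subseteq \RR^{n+k} \to V \subseteq \RR^n$ between open subsets of Euclidean space, each carrying a Riemannian metric, with $h$ Borel and supported in a compact subset of $U$.

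The next step is to discard the critical set $\Sigma \subseteq U$ of $f$. Since $\nj(f) \equiv 0$ on $\Sigma$, it contributes nothing to the left-hand integral; and either $f$ is a submersion and $\Sigma = \varnothing$, or $r \geq k+1$ and Sard's theorem makes $f(\Sigma)$ Lebesgue-null in $\RR^n$, so that for a.e.\ $q$ the fiber $F_q$ avoids $\Sigma$ and is an embedded $C^1$ (in fact $C^r$) $k$-dimensional submanifold, on which the induced Riemannian density $dF_q$ coincides with $\Haus^k$. Hence neither side changes if we replace $U$ by the open set $U \smallsetminus \Sigma$ of regular points, and we may assume $f$ is a $C^1$ submersion. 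At this point I would apply the Euclidean co-area formula to $f$, taken with the background Euclidean metrics and with the integrand reweighted so that its left-hand side is exactly $\int_U h\, \nj(f)\, dM$; matching the two right-hand sides then reduces to the single pointwise identity that, for a surjective linear map $A$ of inner-product spaces with ambient Gram matrix $G$, $\det(G|_{\ker A})\, \det(A A^{\mathrm T}) = \det(A G^{-1} A^{\mathrm T})\, \det(G)$, which is checked in $\ker A$-adapted coordinates by a short Schur-complement computation. Reassembling the charts and undoing the reduction to compactly supported $h$ then yields all the asserted conclusions, including the measurability (and, where relevant, integrability) of $q \mapsto \int_{F_q} h\, dF_q$, which is part of the output of the Euclidean formula.

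I expect the only real friction to be this last bookkeeping step: keeping track of which metric each volume density is computed with respect to, and verifying that the conversion factors combine with the normal Jacobian exactly as the Euclidean co-area formula requires. Everything else is routine --- the reductions are purely formal, Sard's theorem is invoked as a black box, and the Gram identity is a short determinant manipulation --- so in practice one could equally well just cite \cite[\S 3.2]{fed}, remarking that $r \geq k+1$ is precisely the regularity that Sard's theorem needs in order for the fibers of a general $C^r$ map to be rectifiable of the expected dimension, and that this hypothesis becomes superfluous once $f$ is assumed to be a submersion.
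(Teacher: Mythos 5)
Your proposal is correct, but it is worth noting that the paper does not actually prove Proposition \ref{prop:co-area}: it is quoted as a known special case of Federer's co-area formula, with citations to \cite{fed}, \cite{nic}, \cite{sim}, and the accompanying remark only explains the two points your sketch also isolates --- that $r \geq k+1$ is exactly what Sard's theorem needs so that $\Phi$ is defined at a.e.\ $q$, and that the hypothesis evaporates for submersions (where the paper suggests an even more elementary route than yours, namely Fubini in box charts, bypassing the Lipschitz machinery entirely). Your reduction is the standard one and its one nontrivial ingredient checks out: writing $A = [\,0 \ \ B\,]$ in a Euclidean-orthonormal basis adapted to $\ker A$, the $(2,2)$ block of $G^{-1}$ is the inverse of the Schur complement $G/G_{11}$, so $\det(AG^{-1}A^{\mathrm T}) = \det(AA^{\mathrm T})/\det(G/G_{11})$, and combined with $\det G = \det(G_{11})\det(G/G_{11})$ this gives precisely your identity $\det(G|_{\ker A})\det(AA^{\mathrm T}) = \det(AG^{-1}A^{\mathrm T})\det(G)$; the target metric $H$ cancels because it is constant on fibers. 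Two small points to keep straight if you write this up in full: the Euclidean co-area theorem is applied only after deleting the critical set, so you only ever invoke it for submersions (where fibers are genuine $C^1$ submanifolds and $\Haus^k$ agrees with the induced density), and both regimes of the proposition really are used downstream --- $\pi_2$ is a submersion, while $\pi_1$ in Proposition \ref{prop:doubleco} is merely $C^1$ between equidimensional manifolds, i.e.\ the $k=0$, $r\geq 1$ case.
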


\begin{remark}
Note that the function 
\begin{equation*} 
\Phi : \ N \ni q \longmapsto \ds\int_{F_q} h \ dF_q \in \RR_{\geq 0} \cup \{ \infty \}
\end{equation*}
is well-defined only at regular values $q$ of $f$, for which $F_q$ is an embedded $k$-dimensional submanifold of $M$ with induced Riemannian density $dF_q$. But then by Sard's theorem it is defined a.e. on $N$, so (provided it is measurable) its integral over $N$ makes sense. This application of Sard's is the main reason for the required relation between the regularity of $f$ and $k = \dim(M) - \dim(N)$ and explains why this regularity assumption is unnecessary when $f$ is a submersion, since in that case \emph{every} point of $N$ is a regular value for $f$. Indeed, in the submersive case the co-area formula is an easy consequence of Fubini's theorem. 

More general versions of the co-area formula avoid these issues entirely by replacing $dF_q$ with the $k$-dimensional Hausdorff measure associated to the metric space structure on $M$; see in particular Proposition \ref{prop:co-area2}. These measures agree when $F_q$ is a $k$-dimensional submanifold of $M$.
\end{remark}

We now apply the co-area formula to prove the following, the main result of this section: 

\begin{prop} \label{prop:doubleco}
For $\mathcal{H}, V,$ and $W$ as in Theorem \ref{thm:main} we have
$$
\ds\int_{h \in \mathcal{H}} \#(h(V) \cap W) \ d\mathcal{H} = \ds\int_{(p, q) \in V \times W} \left( \ds\int_{\mathcal{V}_{p,q}} \dfrac{\nj(\pi_1)}{\nj(\pi_2)} \ d\mathcal{V}_{p, q} \right) dV dW,
$$
with both sides finite if one is. 
\end{prop}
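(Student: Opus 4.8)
The plan is to apply the co-area formula (Proposition \ref{prop:co-area}) twice — once to the projection $\pi_1 : \mc{V} \to \mc{H}$ and once to $\pi_2 : \mc{V} \to V \times W$ — and then reconcile the two resulting expressions. Both $\pi_1$ and $\pi_2$ are $C^1$ maps out of the $d$-dimensional manifold $\mc{V}$ (with boundary, but $\bd\mc{V}$ has measure zero so we may integrate over $\ucirc{\mc{V}}$ freely), and crucially $\pi_2$ is a surjective submersion while $\pi_1$ need not be. So I would first observe that since $\dim \mc{V} = \dim \mc{H} = d$, the fibers of $\pi_1$ are $0$-dimensional, and for a regular value $h$ of $\pi_1$ the fiber $\pi_1^{-1}(h)$ is exactly the set of triples $(h,p,q)$ with $p \in V$, $q \in W$, $h(p) = q$ — i.e. it is in bijection with $h(V) \cap W$ (using that $h$ is injective). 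By Corollary \ref{cor:trans}, $h(V) \pitchfork W$ for a.e. $h$, and by Proposition \ref{prop:pi1critpts} transversality at every intersection point is precisely regularity of those points of $\pi_1$, so for a.e. $h$ the (finite, by properness considerations) fiber cardinality $\#\pi_1^{-1}(h)$ equals $\#(h(V)\cap W)$.

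Next I would apply the co-area formula to $\pi_1 : \ucirc{\mc{V}} \to \mc{H}$ with the integrand $h_0 := \nj(\pi_2)^{-1}$ (a nonnegative Borel function on $\ucirc{\mc{V}}$, everywhere finite since $\pi_2$ is a submersion so $\nj(\pi_2) > 0$). This gives
\begin{equation*}
\int_{\ucirc{\mc{V}}} \frac{\nj(\pi_1)}{\nj(\pi_2)} \ d\mc{V} \;=\; \int_{h \in \mc{H}} \left( \sum_{x \in \pi_1^{-1}(h)} \frac{1}{\nj(\pi_2)(x)} \right) d\mc{H},
\end{equation*}
the inner sum being a $0$-dimensional "integral" over the fiber. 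Then apply the co-area formula to $\pi_2 : \ucirc{\mc{V}} \to V \times W$ with the same integrand $h_0 = \nj(\pi_2)^{-1}$, giving
\begin{equation*}
\int_{\ucirc{\mc{V}}} \frac{\nj(\pi_1)}{\nj(\pi_2)} \ d\mc{V} \;=\; \int_{(p,q) \in V \times W} \left( \int_{\mc{V}_{p,q}} \frac{\nj(\pi_1)}{\nj(\pi_2)^2} \cdot \nj(\pi_2) \ d\mc{V}_{p,q} \right) dV\, dW \;=\; \int_{V \times W} \left( \int_{\mc{V}_{p,q}} \frac{\nj(\pi_1)}{\nj(\pi_2)} \ d\mc{V}_{p,q} \right) dV\, dW,
\end{equation*}
since the fiber $\pi_2^{-1}(p,q) \cap \ucirc{\mc{V}}$ is exactly $\mc{V}_{p,q}$ by definition. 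The left-hand sides agree, so the two right-hand sides agree, and the "finite if one is" clause propagates through both applications of the co-area formula.

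It remains to identify the inner sum $\sum_{x \in \pi_1^{-1}(h)} \nj(\pi_2)(x)^{-1}$ with $\#(h(V)\cap W)$, i.e. to show $\nj(\pi_2)(x) = 1$ at each such point — or rather, I expect this is \emph{not} literally true, and the cleaner route is to avoid it: instead of inserting $\nj(\pi_2)^{-1}$, I would run the co-area formula for $\pi_1$ with integrand identically $1$ to get $\int_{\ucirc{\mc{V}}} \nj(\pi_1)\, d\mc{V} = \int_{\mc{H}} \#\pi_1^{-1}(h)\, d\mc{H} = \int_{\mc{H}} \#(h(V)\cap W)\, d\mc{H}$, and \emph{separately} run the co-area formula for $\pi_2$ with integrand $\nj(\pi_1)/\nj(\pi_2)$ to convert $\int_{\ucirc{\mc{V}}} \nj(\pi_1)\,d\mc{V} = \int_{\ucirc{\mc{V}}} \frac{\nj(\pi_1)}{\nj(\pi_2)} \cdot \nj(\pi_2)\, d\mc{V}$ into $\int_{V\times W} \big( \int_{\mc{V}_{p,q}} \frac{\nj(\pi_1)}{\nj(\pi_2)}\, d\mc{V}_{p,q}\big)\, dV\, dW$. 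Chaining these two equalities gives the Proposition directly, with no need to compute $\nj(\pi_2)$ pointwise. The main obstacle I anticipate is the bookkeeping around measurability and the $0$-dimensional fiber of $\pi_1$: one must check that $h \mapsto \#\pi_1^{-1}(h)$ is measurable (standard, via the co-area formula's conclusion or a direct argument that $\pi_1$ is locally injective near regular points and proper), that $\#\pi_1^{-1}(h)$ genuinely equals $\#(h(V)\cap W)$ for a.e. $h$ (combining Corollary \ref{cor:trans}, Proposition \ref{prop:pi1critpts}, and injectivity of each $h$), and that the interior-vs-whole-manifold distinction ($\mc{V} \subset \ucirc{\mc{H}} \times V \times W$ and the $\bd$'s) contributes nothing — all of which are routine but need to be stated.
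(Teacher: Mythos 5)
Your final route is exactly the paper's proof: apply the co-area formula to $\pi_1$ with integrand $1$ (using that $\dim\mc{V}=\dim\mc{H}$, Corollary \ref{cor:trans}, and the bijection $\pi_1^{-1}(h)\leftrightarrow h(V)\cap W$ to identify the fiber integral with $\#(h(V)\cap W)$ for a.e.\ $h$), then apply it to the submersion $\pi_2$ with integrand $\nj(\pi_1)/\nj(\pi_2)$, and chain the two identities. The abandoned first attempt is garbled (co-area for $\pi_2$ with integrand $\nj(\pi_2)^{-1}$ would produce $\vol(\mc{V})$ on the left, not $\int\nj(\pi_1)/\nj(\pi_2)$, and the fiber sums $\sum_x \nj(\pi_2)(x)^{-1}$ are indeed not the counting measure), but since you correctly discard that route, the argument you commit to is sound and matches the paper's.
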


\begin{proof} Recall the diagram
\begin{center}
\begin{tikzcd}
& \arrow{ld}[above]{\pi_1 \ \ \ } \mathcal{V} \arrow[rd, "\pi_2"] & \\
\ucirc{\mathcal{H}} & & V \times W
\end{tikzcd}
\end{center}
of Riemannian manifolds and $C^1$ maps. Since
$$
\dim(\mathcal{V}) = \dim(\mathcal{H}) = d
$$
and $\pi_1$ is $C^1$, we may apply Proposition \ref{prop:co-area} to $\pi_1$ (with $h \equiv 1$) to obtain
\begin{equation} \label{eqn:co1}
\ds\int_{\mathcal{V}} \ \nj(\pi_1) \ d\mathcal{V} = \ds\int_{h \in \ucirc{\mathcal{H}}} \left( \ds\int_{\pi_1^{-1}(h)} d\pi_1^{-1}(h) \right) d\mathcal{H} = \ds\int_{h \in \mathcal{H}} \#(h(V) \cap W) \ d\mathcal{H}.
\end{equation}
The second equality follows from the fact that $\pi_1^{-1}(h)$ is a $0$-dimensional submanifold of $\mathcal{V}$ for a.e. $h \in \mathcal{H}$, i.e. a discrete set of points, in which case we have
$$
\ds\int_{\pi_1^{-1}(h)} d\pi_1^{-1}(h) = \ds\sum_{x \in \pi_1^{-1}(h)} 1 = \#(\pi_1^{-1}(h)) = \#(h(V) \cap W).
$$
Note in particular that the a.e. defined function 
$$
h \longmapsto \ds\int_{\pi_1^{-1}(h)} d\pi_1^{-1}(h) = \#(h(V) \cap W)
$$
will be in $L^1(\ucirc{\mathcal{H}}) = L^1(\mathcal{H})$ provided $\nj(\pi_1) \in L^1(\mathcal{V})$.

We now apply the co-area formula a second time, this time to the map $\pi_2$ with $h = \frac{\nj(\pi_1)}{\nj(\pi_2)}$. Observe that this is well-defined because $\pi_2$ is a submersion, so $\nj(\pi_2) > 0$ everywhere on $\mathcal{V}$. We get
\begin{align*}
  \ds\int_{\mathcal{V}} \ \nj(\pi_1) \ d\mathcal{V} &= \ds\int_{\mathcal{V}} \ \left( \dfrac{\nj(\pi_1)}{\nj(\pi_2)} \right) \cdot \nj(\pi_2) \ d\mathcal{V} \\
  &= \ds\int_{(p, q) \in V \times W} \left( \ds\int_{\mathcal{V}_{p,q}} \dfrac{\nj(\pi_1)}{\nj(\pi_2)} \ d\mathcal{V}_{p,q} \right) dV dW. \\
\end{align*}
\noindent Putting this together with (\ref{eqn:co1}) gives the desired result. \end{proof}

\subsection{Fiber integral bounds} \label{sec:fbounds}

By the preceding section, it now suffices to give uniform bounds for the integrals
$$
\ds\int_{\mathcal{V}_{p,q}} \dfrac{\nj(\pi_1)}{\nj(\pi_2)} \ d\mathcal{V}_{p, q}.
$$

\noindent Most of the necessary work has been done in \S \ref{sec:bundles}. First recall Lemma \ref{lem:etatonj}, which gives
    \begin{align*}
    T_{(h,p,q)} \mc{V} &= E_k(h,T_p V, T_q W), \\[2mm]
    d(\pi_i)_{(h, p, q)} &= \Pi_i(h, T_p V, T_q W), \\[2mm]
    \dfrac{\nj(\pi_1)}{\nj(\pi_2)}(h,p,q) = \dfrac{\eta_1}{\eta_2}(h, T_p &V, T_q W) = \eta(h, T_p V, T_q W).
    \end{align*}
\noindent Here $i=1,2$ and $E_k,$ $\Pi_i$, $\eta_i$ are as defined in \S \ref{sec:bundles}. As an immediate consequence we obtain

\begin{lem} \label{lem:isomfib}
For all $(p,q) \in V \times W$, the fiber $\mathcal{V}_{p, q}$ is isometric to $\wh{\mc{F}}_{T_p V, T_q W}$. Moreover, we have
$$
\ds\int_{\wh{\mc{F}}_{T_p V, T_q W}} \eta \ d\wh{\mc{F}}_{T_p V, T_q W} = \ds\int_{\mathcal{V}_{p,q}} \dfrac{\nj(\pi_1)}{\nj(\pi_2)} \ d\mathcal{V}_{p, q}.
$$
\end{lem}

\begin{proof}
    Fix $(p,q) \in V \times W$. Using the fact that the Riemannian metrics on $\mc{G}_k$ and $\mc{V}$ are simply the restrictions of the product metrics on $\mc{H} \times \gr_k(TM) \times \gr_{n-k}(TM)$ and $\mc{H} \times M \times M$, respectively, it is easy to see that the map 
    $$
    \wh{\mc{F}}_{T_p V, T_q W} \too \mc{V}_{p,q}, \ \ (h, T_p V, T_q W) \mapstoo (h, p, q)
    $$ 
    is an isometry.\footnote{Put more intuitively, both sides are just the set of $h \in \ucirc{\mc{H}}$ such that $h(p) = q$ as a submanifold of $\mc{H}$ with the induced Riemannian metric.} The fact that
    $$
    \ds\int_{\mathcal{V}_{p,q}} \dfrac{\nj(\pi_1)}{\nj(\pi_2)}(h,p,q) \ d\mathcal{V}_{p, q}
    = \ds\int_{\wh{\mc{F}}_{T_p V, T_q W}} \eta(h, T_p V, T_q W) \ d\wh{\mc{F}}_{T_p V, T_q W}
    $$
    then follows by change of variables.
\end{proof}

\begin{cor} \label{cor:fibbds}
    If $C_k \geq 1$ is the constant given by Corollary \ref{cor:fibbounds}, then
    $$
    \dfrac{1}{C_k} \leq \ds\int_{\mathcal{V}_{p,q}} \dfrac{\nj(\pi_1)}{\nj(\pi_2)} \ d\mathcal{V}_{p, q} \leq C_k
    $$
    for all $p, q \in V \times W$. 
\end{cor}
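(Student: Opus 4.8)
The plan is to combine Lemma \ref{lem:isomfib} with Corollary \ref{cor:fibbounds} directly; essentially all the work has already been done, and this corollary is just a restatement of the fiber bounds in the language of the intersection manifold $\mc{V}$ rather than the Grassmannian bundle $\mc{G}_k$.

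First I would invoke Lemma \ref{lem:isomfib}, which provides, for every $(p,q) \in V \times W$, an isometry between the fiber $\mc{V}_{p,q}$ and the fiber $\wh{\mc{F}}_{T_p V, T_q W}$ of $\wh{\pi}^k_2|_{\ucirc{\mc{G}}_k}$, together with the identity
$$
\ds\int_{\mc{V}_{p,q}} \dfrac{\nj(\pi_1)}{\nj(\pi_2)} \ d\mc{V}_{p,q} = \ds\int_{\wh{\mc{F}}_{T_p V, T_q W}} \eta \ d\wh{\mc{F}}_{T_p V, T_q W}.
$$
Then I would apply Corollary \ref{cor:fibbounds} to the pair $(\sigma_p, \sigma_q) = (T_p V, T_q W) \in \gr_k(TM) \times \gr_{n-k}(TM)$, which bounds the right-hand integral between $1/C_k$ and $C_k$. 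Chaining these two facts gives the claimed double inequality for all $(p,q) \in V \times W$.

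There is really no obstacle here: the substance lives in Corollary \ref{cor:fibbounds} (hence in Proposition \ref{prop:boundsprop} and the geometric claims of \S\ref{sec:bundles}), and this corollary merely transports those bounds across the isometry of Lemma \ref{lem:isomfib}. The only thing to be slightly careful about is that $T_p V$ and $T_q W$ are genuinely a $k$-plane and an $(n-k)$-plane tangent to $M$ at $p$ and $q$ respectively — which holds because $V$ and $W$ are submanifolds of $M$ of the stated dimensions — so that the pair indeed lies in the domain of $\Phi_k$ and Corollary \ref{cor:fibbounds} applies verbatim. I would state the proof in one or two sentences accordingly.

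\begin{proof}
    Fix $(p,q) \in V \times W$. Since $\dim(V) = k$ and $\dim(W) = n-k$, the pair $(T_p V, T_q W)$ lies in $\gr_k(TM) \times \gr_{n-k}(TM)$, so Corollary \ref{cor:fibbounds} gives
    $$
    \dfrac{1}{C_k} \leq \ds\int_{\wh{\mc{F}}_{T_p V, T_q W}} \eta \ d\wh{\mc{F}}_{T_p V, T_q W} \leq C_k.
    $$
    By Lemma \ref{lem:isomfib}, the middle term equals $\ds\int_{\mc{V}_{p,q}} \frac{\nj(\pi_1)}{\nj(\pi_2)} \ d\mc{V}_{p,q}$, which proves the claim.
\end{proof}
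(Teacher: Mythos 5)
Your proof is correct and is exactly the paper's argument: the corollary is presented there as an immediate consequence of Lemma \ref{lem:isomfib} (transporting the integral to $\wh{\mc{F}}_{T_p V, T_q W}$) combined with Corollary \ref{cor:fibbounds}. Nothing is missing.
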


\noindent Combining this with Proposition \ref{prop:doubleco} and setting $C = \ds\max_{0 \leq k \leq n} C_k$ completes the proof of Theorem \ref{thm:main}.  

\begin{remark}
    Our use of the rather abstract results of \S \ref{sec:bundles} and Appendix A to obtain the constants $C_k$ leaves their geometric meaning somewhat opaque. The advantage of this approach is that it generalizes easily to prove Theorem \ref{thm:generalization}, but in the complementary-dimensional case a more concrete picture is available, which may be useful in applications where estimates of $C$ become desirable. We sketch this approach in Appendix B.
\end{remark}

\section{Generalization} \label{sec:gen}

We now describe the generalization of Theorem \ref{thm:main} to the case when $\dim(V) + \dim(W) \geq \dim(M)$, where $\#(h(V) \cap W)$ is replaced by the appropriate-dimensional Hausdorff measure associated to the metric space structure on $M$.

For $r \geq 0$ and any Riemannian manifold $X$, let $H_X^r$ denote the $r$-dimensional Hausdorff measure on $X$ associated to its induced metric space structure. When $r = \dim(X)$, $H_X^r$ agrees with the volume measure $\nu_X$ induced by the metric on $X$. More generally, if $Y \subset X$ is a $k$-dimensional submanifold with Riemannian metric inherited from $X$ and induced volume measure $\nu_Y$, we have
$$
H_X^k \big|_Y = H_Y^k = \nu_Y.
$$
In particular, the volume of a $k$-dimensional submanifold of $X$ is equal to its $k$-dimensional Hausdorff measure in $X$. In this sense, for $k \in \NN$ one can think of $H_X^k$ as a generalization of $k$-dimensional volume to non-smooth subsets of $X$. 

We now consider a fixed closed Riemannian manifold $M$ of dimension $n$. For simplicity we write $H^r := H^r_M$.

\begin{thm} \label{thm:generalization}

Suppose $(\mathcal{H}, \psi)$ is compact $C^1$ family of diffeomorphisms of $M$ such that
    \vspace{2mm}
\begin{enumerate}[label=$\operatorname{(A\arabic*)}$]
    \item $ev : \mathcal{H} \times M \to M \times M$ defined by $ev(h,p) = (p, h(p))$ is a submersion;
    
    \vspace{3mm}
    
    \item $\ucirc{ev}_k : \ucirc{\mathcal{H}} \times \gr_k(TM) \to \gr_k(TM) \times \gr_k(TM)$ defined by $ev_k(h, \sigma) = (\sigma, dh(\sigma))$ is surjective for all $0 \leq k \leq n-1$.

    \end{enumerate}

    \vspace{2mm}

\noindent Then there is a constant $C = C(\mathcal{H}, \psi) \geq 1$ such that for any submanifolds $V^k, W^\ell \subset M$ with $k + \ell \geq n$, we have

\begin{equation} \label{eqn:genbounds}
\dfrac1C \vol(V)\vol(W) \ \ \leq \ \ \ds\int_{h \in \mathcal{H}} H^{k+\ell-n}(h(V) \cap W) \ d\mathcal{H} \ \ \leq \ \ C \vol(V)\vol(W). \\
\end{equation}
\end{thm}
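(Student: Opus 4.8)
The plan is to mirror the proof of Theorem~\ref{thm:main}, replacing the $0$-dimensional co-area step by one with positive fiber dimension, and replacing $\#(h(V) \cap W)$ by $H^{k+\ell-n}(h(V)\cap W)$ throughout. Set $m := k+\ell-n \geq 0$. As before, let $\mc{V} \subset \ucirc{\mc{H}} \times V \times W$ be the intersection manifold, cut out of $\wt{\mc{V}}$ by $\tilde\pi_2^{-1}(V \times W)$; now $\codim_{\wt{\mc{V}}}(\mc{V}) = \codim_{M\times M}(V\times W) = 2n - k - \ell$, so $\dim(\mc{V}) = (d+n) - (2n-k-\ell) = d + m$, and $\pi_1 : \mc{V} \to \ucirc{\mc{H}}$ now has fibers of dimension $m$. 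The key point is that, for a.e.\ $h$, the fiber $\pi_1^{-1}(h)$ is diffeomorphic to $h(V) \cap W$ (via $(h,p,q) \mapsto q$), and the induced Riemannian density on $\pi_1^{-1}(h)$ pushes forward exactly to the $m$-dimensional volume = Hausdorff measure on $h(V) \cap W$, so that $\int_{\pi_1^{-1}(h)} d\pi_1^{-1}(h) = H^m(h(V)\cap W)$. Here one must be careful: in the $C^1$ category the classical transversality/Sard argument of Corollary~\ref{cor:trans} fails when $k+\ell > n$, so $h(V) \pitchfork W$ need not hold a.e.; this is precisely why one works with $H^m$ rather than claiming $h(V)\cap W$ is a submanifold. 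The correct tool is the general (Hausdorff-measure) form of the co-area formula (cf.\ the remark after Proposition~\ref{prop:co-area} and the promised Proposition~\ref{prop:co-area2}), which applies to the $C^1$ submersion... wait, $\pi_1$ is not a submersion. One applies the general co-area formula to $\pi_1 : \mc{V} \to \ucirc{\mc{H}}$ (a $C^1$ map, not a submersion) with test function $\equiv 1$: this expresses $\int_{\mc{V}} \nj(\pi_1)\, d\mc{V}$ as $\int_{h} H^m_{\mc{V}}(\pi_1^{-1}(h))\, d\mc{H}$, and $H^m_{\mc{V}}(\pi_1^{-1}(h)) = H^m(h(V)\cap W)$ at regular values $h$, which by Sard is a.e.\ $h$ — and crucially the general co-area formula only integrates over regular values, sidestepping the failure of Corollary~\ref{cor:trans}.

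The second step is unchanged in spirit: apply the co-area formula a second time to the submersion $\pi_2 : \mc{V} \to V \times W$ with test function $\nj(\pi_1)/\nj(\pi_2)$ (well-defined since $\nj(\pi_2) > 0$), obtaining
\begin{equation*}
\int_{h \in \mc{H}} H^m(h(V)\cap W)\, d\mc{H} = \int_{\mc{V}} \nj(\pi_1)\, d\mc{V} = \int_{(p,q)\in V\times W} \left( \int_{\mc{V}_{p,q}} \frac{\nj(\pi_1)}{\nj(\pi_2)}\, d\mc{V}_{p,q} \right) dV\, dW,
\end{equation*}
where now $\mc{V}_{p,q} = \pi_2^{-1}(p,q)$ has dimension $d + m - (k+\ell) = d - n$, the same as in the complementary case. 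So it again suffices to bound $\int_{\mc{V}_{p,q}} \nj(\pi_1)/\nj(\pi_2)\, d\mc{V}_{p,q}$ uniformly above and away from zero, over all $(p,q) \in V \times W$ and all choices of $k$-plane $T_p V$ and $\ell$-plane $T_q W$.

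For the fiber bounds one repeats \S\ref{sec:bundles} almost verbatim, now with the bundle $\mc{G}_{k,\ell} := \{(h,\sigma_p,\tau_q) \in \mc{H} \times \gr_k(TM) \times \gr_\ell(TM) : h(p)=q\}$ over $\wt{\mc{V}}$, the maps $J_{(h,\sigma_p,\tau_q)}(v,w) = w - dh(v)$ from $\sigma_p \times \tau_q \to T_q M$, the rank-$d$ continuous bundle $E_{k,\ell} = \ker(J - H)$ (surjectivity of $J - H$ still follows from (A1) since $\dim(\sigma_p) + \dim(\tau_q) = k+\ell \geq n$ and $d(ev_p)_h$ is onto), and $\eta_1, \eta_2$ defined by normal Jacobians of the projections $\Pi_1, \Pi_2$ of $E_{k,\ell}$. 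Claims~\ref{claim:grasssub}–\ref{claim:eta2pos} go through unchanged, with (A2) for $k \leq n-1$ supplying the $k=0$ case needed for surjectivity of $\wh\pi_2^{k,\ell}|_{\ucirc{\mc{G}}_{k,\ell}}$. The only point needing a moment's thought is Claim~\ref{claim:eta1pos}: $\Pi_1(h,\sigma_p,\tau_q)$ is surjective iff $dh(\sigma_p) + \tau_q = T_q M$, i.e.\ iff $dh(\sigma_p)$ meets $\tau_q$ in a subspace of the minimal dimension $k+\ell-n$; since $\dim(\sigma_p) = k \leq n-1$, assumption (A2) lets us choose $h \in \ucirc{\mc{H}}$ with $dh(\sigma_p)$ equal to any prescribed $k$-plane at $q$, in particular one in general position with respect to $\tau_q$, so $\eta_1$ is positive somewhere on each fiber $\wh{\mc{F}}_{\sigma_p,\tau_q}$. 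Then Proposition~\ref{prop:boundsprop} applied to $f = \wh\pi_2^{k,\ell}$ and $\phi = \eta_1/\eta_2$ yields constants $C_{k,\ell} > 1$ with $C_{k,\ell}^{-1} \leq \int_{\wh{\mc{F}}_{\sigma_p,\tau_q}} \eta_1/\eta_2 \leq C_{k,\ell}$, and the isometry $\wh{\mc{F}}_{T_pV, T_qW} \cong \mc{V}_{p,q}$ (exactly as in Lemma~\ref{lem:isomfib}) transfers this to the fiber integrals above. Setting $C = \max_{k+\ell \geq n} C_{k,\ell}$ — a finite max since there are only finitely many pairs $(k,\ell)$ with $0 \leq k,\ell \leq n$ — completes the proof.

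\textbf{Main obstacle.} The essential new difficulty is purely in the first co-area application: one cannot invoke the elementary $C^1$-submersion form of the co-area formula (Proposition~\ref{prop:co-area}) for $\pi_1$, because $\pi_1$ is genuinely not a submersion and, in the $C^1$ category with $k+\ell > n$, its critical values can fail to have full measure (Corollary~\ref{cor:trans} is false). So the proof must rest on the general Hausdorff-measure co-area formula for Lipschitz (here $C^1$) maps — which integrates only over the set of regular values and measures fibers with Hausdorff measure rather than assuming they are submanifolds — together with the identification $H^m_{\mc{V}}(\pi_1^{-1}(h)) = H^m_M(h(V) \cap W)$ at regular values $h$, valid because $(h,p,q) \mapsto q$ restricts to a Riemannian isometry from $\pi_1^{-1}(h)$ onto $h(V) \cap W$ there. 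Everything downstream of this — the second co-area step and the fiber-integral bounds of \S\ref{sec:bundles} — is a routine transcription, since $\dim(\mc{V}_{p,q}) = d-n$ and the bundle-theoretic machinery is insensitive to whether $k+\ell = n$ or $k+\ell > n$.
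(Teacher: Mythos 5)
Your overall architecture is the paper's: double co-area, the bundles $\mc{G}_{k,\ell}$, and Appendix A. But the pivot of your first co-area step contains a concrete error. You claim that $(h,p,q)\mapsto q$ restricts to a Riemannian isometry from $\pi_1^{-1}(h)$ onto $h(V)\cap W$. It does not, for the setup you (and the paper) use: a tangent vector to $\pi_1^{-1}(h)$ at $(h,p,q)$ has the form $(0,\dot p, dh(\dot p))$ with $\dot p \in T_pV$ and $dh(\dot p)\in T_qW$, so its norm in $\mc{V}\subset\mc{H}\times M\times M$ is $(|\dot p|^2+|dh(\dot p)|^2)^{1/2}$, whereas its image under $\pi_3$ has norm $|dh(\dot p)|$. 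The map is only distance non-increasing (in fact uniformly bi-Lipschitz, since $\sup_{h,q}\norm{dh_q^{-1}}<\infty$ by compactness). Consequently your identity $H^m_{\mc{V}}(\pi_1^{-1}(h))=H^m_M(h(V)\cap W)$ is false, and the correct version of the first co-area step carries a Jacobian factor $\rho(h,p,q)=\norm{\bigwedge^{m}d\pi_3|_{\ker d\pi_1}}$; the paper threads $\rho$ through the second co-area application and extends it to a function $\wh\rho$ on $\mc{G}_{k,\ell}$, checking that $\wh\rho\cdot\eta$ (though not $\wh\rho$ alone) is continuous. Your omission happens to be harmless for the final two-sided bound, precisely because $\rho$ is pinched between $1$ and a positive constant depending only on $\mc{H}$ --- but you neither notice the factor nor make that argument, so as written the displayed equalities are wrong. (The isometry you want \emph{does} hold if one instead parametrizes $\wt{\mc{V}}$ by $(h,q)\mapsto(h^{-1}(q),q)$, as the paper notes in a footnote; that would be a legitimate way to make your claim literally true.)

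A second, related slip: after correctly observing that the classical Sard/transversality argument fails in the $C^1$ category when $k+\ell>n$, you nonetheless write that the identification holds ``at regular values $h$, which by Sard is a.e.\ $h$.'' That is exactly the statement that fails. The correct tool is the Hausdorff-measure refinement of Sard (Federer's co-area consequence, Proposition \ref{prop:sardsgen} in the paper): for a.e.\ $h$ the \emph{critical part} of the fiber, $\pi_1^{-1}(h)\cap C$, is $H^{m}$-null, even though $h$ itself need not be a regular value; combined with the distance-non-increasing property of $\pi_3$ this shows $H^m_M(h(V)\cap_{NT}W)=0$ for a.e.\ $h$, which is what lets you discard the non-transversal intersections. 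Your parenthetical that ``the general co-area formula only integrates over regular values'' is not accurate --- it integrates $H^m$ over the entire fiber --- so this step needs the argument just described. Everything downstream (the second co-area application, the bundle $E_{k,\ell}$, the positivity of $\eta_1$ on each fiber via $\Atwo$, and the appeal to Proposition \ref{prop:boundsprop}) matches the paper and is fine.
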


The need to work with Hausdorff measures rather than volumes here derives from our insistence on assuming only $C^1$ regularity. In this context, when $\dim(V) + \dim(W) > \dim(M)$ we can no longer assume that $h(V) \pitchfork W$ for a.e. $h \in \mc{H}$; thus $\vol(h(V) \cap W)$ is not necessarily a.e. well-defined. However, if we let $h(V) \cap_T W$ denote the set of transversal intersection points between $h(V)$ and $W$ (which when nonempty is always a $(k+\ell-n)$-dimensional submanifold of $M$), we will see below that indeed
$$
H^{k+\ell-m}(h(V) \cap W) = H^{k+\ell-m}(h(V) \cap_T W) = \vol(h(V) \cap_T W)
$$
for a.e. $h \in \mathcal{H}$.

Note that, since $H^0$ is the counting measure on $M$, Theorem \ref{thm:generalization} does indeed include Theorem \ref{thm:main} as a special case. Moreover, the proof method is essentially the same. Thus in this section we will content ourselves with some remarks on the modifications needed to adapt our proof of Theorem \ref{thm:main} to the general setting. 

Let $V^k, W^\ell \subset M$ be submanifolds with $k + \ell \geq n$. The solution manifold $\widetilde{\mathcal{V}} \subset \mathcal{H} \times M \times M$ remains defined as before, and again (A1) implies that
$$
\mathcal{V} := \left( {\tilde{\pi}_2}|_{\text{int}(\widetilde{\mathcal{V}})} \right)^{-1}(V \times W)
$$
is a $C^1$ submanifold of $\widetilde{\mathcal{V}}$. Now, however, we have
$$
\dim(\mathcal{V}) = (d + n) - \codim(V \times W)  = (d+n) - (2n - k - \ell) = d + k + \ell - n. 
$$

\vspace{1mm}

The basic observation motivating these definitions and underlying our entire proof strategy for Theorem \ref{thm:main} was that
$$
\#(h(V) \cap W) = \#(\pi_1^{-1}(h)),
$$
or equivalently, 
$$
H^0_M(h(V) \cap W) = H^0_\mc{V}(\pi_1^{-1}(h)).
$$
The reason was that there is an obvious bijection between these sets (given explicitly by $\pi_3 : \mc{V} \to M$, defined below), and bijections between metric spaces preserve $0$-dimensional Hausdorff measure. Obviously this fails for higher dimensional Hausdorff measures, so for the general case we must attend to how the maps $\pi_3^h := \pi_3|_{\pi_1^{-1}(h)} : \pi_1^{-1}(h) \to h(V) \cap W$ transform $(k+\ell-n)$-dimensional Hausdorff measures, where $\pi_3 : \mc{V} \to W$ is defined as one would expect, by $\pi_3(h,p,q) = q$.\footnote{In fact this issue is largely an artifact of our original choice of set-up: it can be avoided by replacing $\wt{\mc{V}}$ with $\mc{H} \times M$ and $\wt{\pi}_2 : \wt{\mc{V}} \to M \times M$ by the map $\mc{H} \times M \to M \times M$, $(h, q) \mapsto (h^{-1}(q), q)$, since for these choices the resulting identification of $\pi_1^{-1}(h)$ with $h(V) \cap W$ is isometric. See \cite{condition} for an example of this approach.}

First suppose that $h$ is a regular value of $\pi_1$; this is true iff $h(V) \pitchfork W$. In this case, $\pi_1^{-1}(h)$ and $h(V) \cap W$ are $(k+\ell-n)$-dimensional submanifolds of $\mathcal{V}$ and $M$, respectively, and $\pi_3^h$ is a $C^1$ diffeomorphism. Their $(k+\ell-n)$-Hausdorff measures then agree with the volume measures induced by their Riemannian structures, and moreover we have
\begin{align*}
H^{k+\ell-n}_M(h(V) \cap W) = \vol(h(V) \cap W) &= \vol(\pi_3(\pi_1^{-1}(h))) \\
&= \ds\int_{\pi_1^{-1}(h)} \norm{\textstyle\bigwedge^{k+\ell-n}d\pi_3^h} d\pi_1^{-1}(h) \\
&= \ds\int_{\pi_1^{-1}(h)} \norm{\textstyle\bigwedge^{k+\ell-n}d\pi_3^h} dH^{k+\ell-n}_\mathcal{V}.
\end{align*}
Note also that 
$$
d\pi^h_{3}(h,p,q) = d\pi_3|_{T_{(h,p,q)}{\pi_1^{-1}(h)}} = d\pi_3|_{\ker(d\pi_{1}(h,p,q))},
$$
and this last expression is well-defined even when $(h,p,q)$ is not a regular point for $\pi_1$. Thus we can define
$$
\rho : \mc{V} \longrightarrow \RR_{\geq 0}
$$
by
$$
\rho(h,p,q) = \norm{\textstyle\bigwedge^{k+\ell-n}d\pi_3\big|_{\ker(d\pi_{1}(h,p,q))}},
$$
and the above equation becomes
\begin{equation} \label{eqn:jacgen} 
H^{k+\ell-n}_M(h(V) \cap W) = \ds\int_{\pi_1^{-1}(h)} \rho \ dH^{k+\ell-n}_\mathcal{V}.
\end{equation}

 So far, (\ref{eqn:jacgen}) holds only when $h$ is a regular value of $\pi_1$. In the complementary dimensional case this was enough, since when $\dim(\mc{H}) = \dim(\mc{V})$ Sard's theorem applies and thus (\ref{eqn:jacgen}) holds for a.e. $h \in \mc{H}$. When $\dim(V) + \dim(W) > \dim(M)$, however, we have $\dim(\mc{V}) > \dim(\mc{H})$, so Sard's no longer applies (since $\pi_1$ can only be assumed $C^1$). We appeal instead to the following generalization: 

\begin{prop}[\cite{sim}, Theorem 6.4] \label{prop:sardsgen}
Let $f: M^m \to N^n$ be a $C^1$ map between Riemannian manifolds, and let $C \subset M$ denote the set of critical points of $f$. Set $r = \max\{0, m-n\}$. Then
\begin{equation}
    H^r_M(f^{-1}(y) \cap C) = 0
\end{equation}
for Lebesgue a.e. $y \in N$.
\end{prop}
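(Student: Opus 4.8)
The plan is to localize the statement and then split into the two regimes $m \le n$ and $m > n$, the first being classical Sard and the second being a direct consequence of the general coarea formula applied to the indicator of the critical set. For the localization: since $M$ is $\sigma$-compact and $f$ is $C^1$ (hence locally Lipschitz), $M$ is covered by countably many coordinate charts on each of which $f$, read in coordinates, is globally Lipschitz; it then suffices to prove the conclusion for $f$ restricted to each such chart and to intersect the resulting full-measure sets of admissible $y$. Note also that the critical set $C$ is closed, since its complement (the regular points) is open, so $f^{-1}(y) \cap C$ is Borel and $H^r_M(f^{-1}(y) \cap C) \in [0, \infty]$ is well-defined for every $y$.

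When $m \le n$ we have $r = 0$, and $H^0_M(f^{-1}(y) \cap C) = 0$ is equivalent to $f^{-1}(y) \cap C = \varnothing$; so the claim reduces to showing that $f(C) \subset N$ is Lebesgue-null, which is the classical Sard theorem for $C^1$ maps (no extra regularity is needed in the range $\dim(M) \le \dim(N)$). Concretely: if $m < n$ then $C = M$ and a locally Lipschitz map cannot raise Hausdorff dimension, so $H^n_N(f(M)) = 0$; and if $m = n$ then $\nj(f) = |\det df|$ vanishes identically on $C$, so the area formula gives $\vol(f(C)) \le \int_C |\det df| = 0$.

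When $m > n$ we have $r = m - n$, and here the general (Federer) coarea formula is the essential tool — the elementary submersion version of Proposition \ref{prop:co-area} does not suffice, since in our application the relevant $f = \pi_1$ is only $C^1$ and far from a submersion, which is precisely why $H^{m-n}_M$ rather than Riemannian volume must appear on the fibers. Applying the coarea formula (Proposition \ref{prop:co-area2}; cf. \cite{fed}, \cite{sim}) to $f$ with the Borel integrand $g = \mathbf{1}_C$ gives
$$
\int_M \mathbf{1}_C \cdot \nj(f)\, dH^m_M \ = \ \int_N \left( \int_{f^{-1}(y)} \mathbf{1}_C \, dH^{m-n}_M \right) dH^n_N(y) \ = \ \int_N H^{m-n}_M(f^{-1}(y) \cap C)\, dH^n_N(y).
$$
By definition of the critical set, $\nj(f) \equiv 0$ on $C$, so the left-hand side is $0$; since the integrand on the right is non-negative, it must vanish for $H^n_N$-a.e. $y$, which is exactly the assertion.

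The main obstacle is not the argument itself but its single substantive input: the coarea formula at merely $C^1$ (equivalently, locally Lipschitz) regularity is a genuinely deep theorem, and everything beyond quoting it is routine measure-theoretic bookkeeping — verifying that $C$ and $y \mapsto H^{m-n}_M(f^{-1}(y) \cap C)$ are measurable, and patching the local statements together over a countable exhaustion of $M$.
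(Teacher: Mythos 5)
Your argument is correct and is exactly the route the paper takes: the paper cites this as \cite[Theorem 6.4]{sim} and remarks only that it is a straightforward consequence of Federer's general co-area formula, which is precisely your main step (apply the co-area formula to $\mathbf{1}_C$, use $\nj(f)\equiv 0$ on $C$, and conclude the fiber integrals vanish a.e.). Your additional handling of the $m\le n$ case and the measurability bookkeeping is fine and just fills in what the paper leaves implicit.
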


\begin{remark}
    The interesting case (and the one relevant here) is when $m > n$. As Simon notes in \cite{sim}, this proposition is itself a straightfoward consequence of Federer's general co-area formula; see \cite[\S 3.2]{fed} for details. 
\end{remark}

To apply this to the present setting, first write $C \subset \mathcal{V}$ for the set of critical points of $\pi_1$, and for $h \in \mathcal{H}$, let $h(V) \cap_T W$ denote the set of transversal intersection points between $h(V)$ and $W$, and $h(V) \cap_{NT} W = (h(V) \cap W) \setminus (h(V) \cap_T W)$. Then we have
\begin{align*}
    \pi_3(\pi_1^{-1}(h) \setminus C) &= h(V) \cap_T W, \\
    \pi_3(\pi_1^{-1}(h) \cap C) &= h(V) \cap_{NT} W, 
\end{align*}
and 
\begin{align*}
H^{k+\ell-n}_M(h(V) \cap W) &= H^{k+\ell-n}_M(h(V) \cap_T W) + H^{k+\ell-n}_M(h(V) \cap_{NT} W) \\
&= \ds\int_{\pi_1^{-1}(h) \setminus C} \rho \ dH^{k+\ell-n}_\mathcal{V} + H^{k+\ell-n}_M(\pi_3(\pi_1^{-1}(h) \cap C)).
\end{align*}
By Proposition \ref{prop:sardsgen}, 
$$
\ds\int_{\pi_1^{-1}(h) \setminus C} \rho \ dH^{k+\ell-n}_\mathcal{V} = \ds\int_{\pi_1^{-1}(h)} \rho \ dH^{k+\ell-n}_\mathcal{V}
$$
for a.e. $h \in \mathcal{H}$. For the second term, observe that $\pi_3 = \widetilde{\pi}_3|_{\pi_1^{-1}(h)}$, where $\widetilde{\pi}_3 : \widetilde{\pi}_1^{-1}(h) \to M$ is a distance non-increasing diffeomorphism (indeed $\widetilde{\pi}_1^{-1}(h)$ is canonically identified with $\Gamma(h) \subset M \times M$, with $\widetilde{\pi}_3$ corresponding to projection onto the second coordinate), so that
$$
H^{k+\ell-n}_M(\pi_3(\pi_1^{-1}(h) \cap C)) \leq H^{k+\ell-n}_\mathcal{V}(\pi_1^{-1}(h) \cap C).
$$
Since the right-hand side is zero for a.e. $h \in \mc{H}$ by Proposition \ref{prop:sardsgen}, so is the left. We conclude that
\begin{align*}
H^{k+\ell-n}_M(h(V) \cap W) &= \ds\int_{\pi_1^{-1}(h) \setminus C} \rho \ dH^{k+\ell-n}_\mathcal{V} + H^{k+\ell-n}_M(\pi_3(\pi_1^{-1}(h) \cap C)) \\
&= \ds\int_{\pi_1^{-1}(h)} \rho \ dH^{k+\ell-n}_\mathcal{V}
\end{align*}
for Lebesgue a.e. $h \in \mc{H}$.

To apply this formula, we need the following slightly more general form of the co-area formula: 

\begin{prop}[The co-area formula, $C^1$ case] \label{prop:co-area2}
Let $f : M^m \to N^n$ be a $C^1$ map between Riemannian manifolds, $m \geq n$. For $q \in N$, write $F_q = f^{-1}(q)$. Then for any Borel measurable function $h$ on $M$ we have
$$
\ds\int_M h \cdot \nj(f) \ dM = \ds\int_{q \in N} \left( \ds\int_{F_q} h \ dH^{m-n}_N \right) dN,
$$
with both sides finite if one is.
\end{prop}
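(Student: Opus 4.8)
The statement to prove is the $C^1$ co-area formula (Proposition \ref{prop:co-area2}) for an arbitrary $C^1$ map $f : M^m \to N^n$ with $m \geq n$, allowing critical points. The strategy is to reduce to the submersive case already available to us (Proposition \ref{prop:co-area}, which in the submersive case holds for $C^1$ maps), handle the critical locus by showing it contributes nothing to either side, and then patch together local contributions via a partition of unity. The main conceptual point is that near a regular point everything is classical, while near a critical point we must show $\nj(f) = 0$ (immediate from the definition of the normal Jacobian, since $df_x$ fails to be surjective exactly at critical points) \emph{and} that the fibers $F_q$ meet the critical set in an $H^{m-n}_N$-null set for a.e.\ $q$, which is precisely the content of the generalized Sard theorem (Proposition \ref{prop:sardsgen}).

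First I would record the key reductions. By a partition of unity subordinate to a countable cover of $M$ by charts, and using $\sigma$-additivity of both sides in $h$ (writing $h = \sum_j h_j$ with each $h_j$ supported in a small set and nonnegative, after splitting into positive and negative parts), it suffices to prove the identity for $h$ supported in a small coordinate neighborhood. Let $R \subset M$ be the set of regular points of $f$ (an open set) and $C = M \setminus R$ the critical set. Split $h = h\cdot\mathbf{1}_R + h\cdot\mathbf{1}_C$. On $C$ we have $\nj(f) \equiv 0$ by \S\ref{sec:normjac}, so the left-hand side picks up no contribution from $C$; on the right-hand side, for the term $\int_{F_q} h\cdot\mathbf{1}_C \, dH^{m-n}_N = \int_{F_q \cap C} h \, dH^{m-n}_N$, Proposition \ref{prop:sardsgen} gives $H^{m-n}_M(F_q \cap C) = 0$ — and since on the open set $R$ the fiber $F_q$ is a genuine $(m-n)$-submanifold whose intrinsic volume is $H^{m-n}_M$ restricted to it, while $H^{m-n}_N$ appearing in the statement should be read as the $(m-n)$-dimensional Hausdorff measure with respect to the metric on $M$ (a mild notational slip in the statement, consistent with the surrounding discussion) — this contribution also vanishes for a.e.\ $q \in N$. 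So both sides reduce to their restrictions to $R$.

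On $R$, $f|_R : R \to N$ is a $C^1$ submersion, so Proposition \ref{prop:co-area} applies directly to $f|_R$ with the Borel function $h|_R$, giving
$$
\int_R h \cdot \nj(f) \, dM = \int_{q \in N} \left( \int_{F_q \cap R} h \, dH^{m-n}_M \right) dN .
$$
Combining with the vanishing of the $C$-contributions on both sides yields the claimed identity. The ``both sides finite if one is'' clause is inherited from the corresponding clause in Proposition \ref{prop:co-area} applied on $R$, together with the fact that the discarded pieces are identically zero (left side) or a.e.\ zero (right side), hence contribute nothing to finiteness either way.

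\textbf{Expected main obstacle.} The routine but slightly delicate part is bookkeeping the measurability and null-set statements: one must check that $q \mapsto \int_{F_q} h \, dH^{m-n}_M$ is well-defined for a.e.\ $q$ (it is defined wherever $F_q \cap C$ is null, which is a.e.\ $q$ by Proposition \ref{prop:sardsgen}) and measurable, and that the exceptional $q$-sets arising from the partition of unity (one per chart) can be taken in aggregate still to be null, which is fine since a countable union of null sets is null. The only genuinely substantive input beyond the already-cited submersive co-area formula is Proposition \ref{prop:sardsgen}; everything else is localization and the elementary observation that $\nj$ vanishes off the regular set. I do not anticipate a hard step, only careful handling of the measure-theoretic fine print — which is why, in the paper itself, one would reasonably relegate the full argument to a reference such as \cite[\S 3.2]{fed} and present here only the reduction sketched above.
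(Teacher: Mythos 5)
The paper does not prove Proposition \ref{prop:co-area2} at all: it is quoted as a known result, with the reader referred to \cite[\S 3.2]{fed}, \cite{nic}, \cite{sim}. Your proposal therefore necessarily takes a different route, namely an actual derivation from the two other quoted ingredients in the paper, and the derivation is correct: splitting $M$ into the (open) regular set $R$ and the critical set $C$, the left-hand side loses nothing over $C$ because $\nj(f)\equiv 0$ there, the right-hand side loses nothing for a.e.\ $q$ because Proposition \ref{prop:sardsgen} makes $F_q\cap C$ an $H^{m-n}$-null set, and on $R$ the submersive $C^1$ case of Proposition \ref{prop:co-area} applies verbatim (with $dF_q = H^{m-n}_M\big|_{F_q\cap R}$, as recorded at the start of \S\ref{sec:gen}). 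You also correctly read $H^{m-n}_N$ in the displayed formula as the Hausdorff measure of $M$ restricted to the fiber; that is indeed a typo. Two small remarks. First, the partition-of-unity step is superfluous: once you split $h=h\mathbf{1}_R+h\mathbf{1}_C$, the $R$-term is handled globally by one application of Proposition \ref{prop:co-area} to the submersion $f|_R:R\to N$, so no localization is needed. Second, be aware of the direction of logical dependency in the literature: the paper's own remark after Proposition \ref{prop:sardsgen} notes that the generalized Sard theorem is usually \emph{deduced from} Federer's co-area formula, so your argument is non-circular only because that Sard-type statement admits independent proofs (which it does). Within the paper's framework, where both ingredients are quoted black boxes, your reduction is a clean and legitimate way to see why the general $C^1$ case follows from the submersive one.
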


We can now proceed as before. First, apply Proposition \ref{prop:co-area2} twice to deduce
\begin{align*}
    \ds\int_{h \in \mathcal{H}} H^{k+\ell-n}(h(V) \cap W) \ d\mathcal{H} &= \ds\int_{h \in \mathcal{H}} \left( \ds\int_{\pi_1^{-1}(h)} \rho \ dH^{k+\ell-n}_\mathcal{V} \right) \ d\mc{H} \\
    &= \ds\int_\mc{V} \rho \cdot \nj(\pi_1) \ d\mc{V} \\
    &= \ds\int_{(p,q) \in V \times W} \left( \ds\int_{\mc{V}_{p,q}} \dfrac{\rho \cdot \nj(\pi_1)}{\nj(\pi_2)} \ d\mc{V}_{p,q} \right) dV dW,
\end{align*}
where $\mc{V}_{p,q} = \pi_2^{-1}(p,q)$ as before. The proof now reduces to showing that the integrand $\frac{\rho \cdot \nj(\pi_1)}{\nj(\pi_2)}$ can be viewed as a continuous function on the appropriate Grassmannian bundle $\mc{G}_{k,\ell}$ over $\widetilde{\mathcal{V}}$, which is positive somewhere on each fiber of the submersion $\ucirc{\mathcal{G}}_{k,\ell} \to \gr_k(TM) \times \gr_\ell(TM).$ 

The space $\mc{G}_{k, \ell} \subset \mc{H} \times \gr_k(TM) \times \gr_\ell(TM)$, the bundle $E_{k,\ell}$, and the functions $\eta_1, \eta_2$, and $\eta$ are defined exactly as in \S \ref{sec:bundles} (so e.g. in our original notation, $\mc{G}_k = \mc{G}_{k, n-k}$). Again, $\eta : \mc{G}_{k, \ell} \to \RR_{\geq 0}$ is continuous, and positive at $(h, \sigma_p, \sigma_q)$ iff $dh(\sigma_p) + \sigma_q = T_q M$. To extend $\rho$ to a function on $\mc{G}_{k, \ell}$, simply define 
\begin{align*}
\Pi_3(h, \sigma_p, \sigma_q) : E_{k, \ell}(h, \sigma_p, \sigma_q) &\too T_q M \\
(\dot{h}, v, w) &\mapstoo w,
\end{align*}
and $\wh{\rho} : \mc{G}_{k,\ell} \to \RR_{\geq 0}$ by
$$
\wh{\rho}(h, \sigma_p, \sigma_q) = \norm{\textstyle\bigwedge^{k+\ell-n}\Pi_3(h, \sigma_p, \sigma_q)\big|_{\ker(\Pi_1(h, \sigma_p, \sigma_q))}}.
$$
Clearly we have
$$
\wh{\rho}(h, T_p V, T_q W) = \rho(h, p, q).
$$
Moreover, $\wh{\rho}$ is continuous at all points where $\Pi_1$ has full rank, that is, away from the zeros of $\eta$. Since also $0 < \wh{\rho} \leq 1$, it follows that $\wh{\rho} \cdot \eta$ is continuous on $\mc{G}_{k, \ell}$, with the same zero set as $\eta$, and satisfies
$$
(\wh{\rho} \cdot \eta)(h, T_p V, T_q W) = \left( \rho \cdot \dfrac{NJ(\pi_1)}{NJ(\pi_2)} \right) (h, p, q). 
$$
One can now apply the results of Appendix A as before to complete the proof.

\section{Construction of $(\mc{H}, \psi)$} \label{sec:construction}

In this section we show that Theorems \ref{thm:main} and \ref{thm:generalization} are non-vacuous, by constructing, for an arbitrary closed manifold $M$, a compact $C^1$ family of diffeomorphisms of $M$ satisfying hypotheses (A1) and (A2). This proves Theorem \ref{thm:existence1}. 

Actually, since it is no more difficult (and arguably more natural) to work in the smooth category here, the family we construct will be $C^\infty$. In addition, we will be unconcerned with compactness until the end. Specifically, we first obtain a $C^\infty$ family $(\RR^N, \psi)$ of the form
$$
\psi : (\RR^N, 0) \xrightarrow[ \ \ \ ]{} (\Diff^\infty(M), \id)
$$
which satisfies hypothesis (A1) and (A2), such that the surjectivity requirements of (A2) are satisfied by $(\BB^N_R, \psi|_{\BB^N_R})$ for all $R$ sufficiently large.\footnote{Here and below we write $\BB^N_R, \overline{\BB}^N_R \subset \RR^N$ for the open and closed balls, respectively, centered at $0$ of radius $R$, with the subscript occasionally omitted in the special case $R = 1$, i.e. $\BB^N := \BB^N_1$.} The desired compact family is then obtained by restricting $\psi$ to $\overline{\BB}^N_R$ for some large $R$.

We begin with some general lemmas. Recall that for $1 \leq r \leq \infty$, a \emph{$C^r$ family of diffeomorphisms of $M$} is a pair $(\mc{H}, \psi)$, with $\mc{H}$ a smooth manifold, possibly with boundary, and $\psi : \mc{H} \to \Diff^r(M)$ such that the associated evaluation map $ev_\psi : \mc{H} \to M$ is $C^r$; this implies in particular that $\psi$ is continuous. Any $C^r$ family of diffeomorphisms of $M$ is, of course, a $C^s$ family of diffeomorphisms for all $s < r$. 

One motivation for interpreting smoothness in this way---instead of via the strictly stronger requirement that $\psi \in C^r(\mc{H}, \Diffr(M))$ for the standard Banach/Fr\'{e}chet manifold structure on $\Diffr(M)$---is the following elementary fact, which shows that the class of $C^r$ families of diffeomorphisms in our weaker sense is in a natural sense closed under composition.\footnote{That this fails for the stronger notion is a consequence of the well-known fact that the composition map $\left[ \Diffr(M) \right]^2 \to \Diffr(M)$ is only $C^0$ when $r < \infty$.}

\begin{lem} \label{lem:compCr}
Let $(\mathcal{H}_1, \psi_1)$ and $(\mathcal{H}_2, \psi_2)$ be $C^r$ families of diffeomorphisms of $M$, with $\bd \mc{H}_1 = \bd \mc{H}_2 = \varnothing$. Define
$$
\psi :  \mathcal{H}_1 \times \mc{H}_2 \xrightarrow[ \ \ \ ]{} \Diff^r(M)
$$
by $\psi(h_1, h_2) = \psi_2(h_2) \circ \psi_1(h_1)$, i.e. $\psi = \operatorname{comp} \circ (\psi_1 \times \psi_2)$.\footnote{For notational convenience we adopt the convention which defines the composition map $\left[ \Diffr(M) \right]^k \to \Diffr(M)$ by $\operatorname{comp}(f_1, \dots, f_k) = f_k \circ \cdots \circ f_1$.} Then $(\mc{H}_1 \times \mc{H}_2, \psi)$ is a $C^r$ family of diffeomorphisms of $M$. 
\end{lem}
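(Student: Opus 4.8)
The plan is to unwind the definition of a $C^r$ family and check the one nontrivial condition, namely that the evaluation map of the composed family is $C^r$. Writing $ev_{\psi_i} : \mc{H}_i \times M \to M$ for the (given $C^r$) evaluation maps, we must show that
$$
ev_\psi : (\mc{H}_1 \times \mc{H}_2) \times M \too M, \qquad ev_\psi(h_1, h_2, p) = \psi_2(h_2)\big(\psi_1(h_1)(p)\big)
$$
is $C^r$. The key observation is that $ev_\psi$ factors as a composition of $ev_{\psi_1}$ and $ev_{\psi_2}$ with obvious smooth (indeed linear/projection) maps: explicitly,
$$
ev_\psi = ev_{\psi_2} \circ \big( \operatorname{pr}_{\mc{H}_2} \times ev_{\psi_1}\big),
$$
where $\operatorname{pr}_{\mc{H}_2} \times ev_{\psi_1} : (\mc{H}_1 \times \mc{H}_2) \times M \to \mc{H}_2 \times M$ sends $(h_1, h_2, p)$ to $\big(h_2, ev_{\psi_1}(h_1,p)\big) = (h_2, \psi_1(h_1)(p))$. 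First I would note that $\operatorname{pr}_{\mc{H}_2} \times ev_{\psi_1}$ is $C^r$, being built from the projection $(\mc{H}_1 \times \mc{H}_2)\times M \to \mc{H}_2$ (smooth) and the map $(\mc{H}_1 \times \mc{H}_2) \times M \to M$, $(h_1, h_2, p) \mapsto ev_{\psi_1}(h_1, p)$, which is $ev_{\psi_1}$ precomposed with the smooth projection $(\mc{H}_1 \times \mc{H}_2) \times M \to \mc{H}_1 \times M$, hence $C^r$. Then $ev_\psi$ is a composition of $C^r$ maps between finite-dimensional manifolds, hence $C^r$ by the chain rule.

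Next I would check the remaining parts of the definition. One must verify that for each fixed $(h_1, h_2)$ the map $\psi(h_1, h_2) = \psi_2(h_2) \circ \psi_1(h_1)$ is actually a $C^r$ diffeomorphism of $M$: this is immediate since $\psi_1(h_1), \psi_2(h_2) \in \Diff^r(M)$ and $\Diff^r(M)$ is a group. One should also confirm $\psi$ is continuous as a map into $\Diff^r(M)$ — but this is automatic: the paper notes that the $C^r$-ness of the evaluation map already implies continuity of the parametrizing map, so having established that $ev_\psi$ is $C^r$ we get continuity of $\psi$ for free. Finally, $\mc{H}_1 \times \mc{H}_2$ is a smooth manifold (without boundary, since both factors are), so it carries a Riemannian metric and density as required. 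Putting these together, $(\mc{H}_1 \times \mc{H}_2, \psi)$ satisfies all the defining properties of a $C^r$ family of diffeomorphisms of $M$.

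The argument is essentially bookkeeping, so there is no serious obstacle; the only point requiring a moment's care is getting the factorization of $ev_\psi$ right and confirming that each intermediate map is genuinely $C^r$ (in particular that precomposing a $C^r$ map with a smooth projection keeps it $C^r$, which is trivial, and that the target-side reindexing $p \mapsto (h_2, \psi_1(h_1)(p))$ really is $C^r$ jointly in all variables). I would emphasize in the writeup that this factorization is exactly why the ``weak'' notion of $C^r$ family behaves well under composition whereas the naive one (requiring $\psi \in C^r(\mc{H}, \Diff^r(M))$) does not — the composition map on $\Diff^r(M)$ itself is only $C^0$ for finite $r$, but the evaluation-map formulation sidesteps this because evaluation-then-compose stays in the finite-dimensional smooth category. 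The hypothesis $\bd\mc{H}_1 = \bd\mc{H}_2 = \varnothing$ is not really needed for this particular lemma and is presumably included only for notational uniformity with its intended applications; I would either use it silently or remark that it is inessential here.
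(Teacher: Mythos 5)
Your proof is correct and follows essentially the same route as the paper's: both reduce the claim to showing $ev_\psi$ is $C^r$ by writing it as a composition of $ev_{\psi_1}$, $ev_{\psi_2}$, and smooth projections, then invoking the chain rule (you merely spell out the intermediate map $\operatorname{pr}_{\mc{H}_2} \times ev_{\psi_1}$ more explicitly and tidy up the remaining definitional checks). Incidentally, you apply the factors in the order actually stated in the lemma, $ev_\psi(h_1,h_2,p) = ev_{\psi_2}(h_2, ev_{\psi_1}(h_1,p))$, whereas the paper's proof writes the composition in the opposite order; this is a harmless slip on the paper's side since the argument is symmetric.
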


\begin{proof}
It suffices to check that the evaluation map
\begin{align*}
ev_\psi : (\mc{H}_1 \times \mc{H}_2) \times M &\xrightarrow[ \ \ \ \ \ \ ]{} M \\
(h_1, h_2, p) &\xmapsto[ \ \ \ \ \ \ ]{} \psi(h_1, h_2)(p)
\end{align*}
is $C^r$. We have
\begin{align*}
    ev_\psi(h_1, h_2, p) &= \psi(h_1, h_2)(p) \\
    &= \psi_1(h_1) (\psi_2(h_2)(p)) \\
    &= ev_{\psi_1}(h_1, \psi_2(h_2)(p)) = ev_{\psi_1}(h_1, ev_{\psi_2}(h_2, p)).
\end{align*}
Since $ev_{\psi_1}$ and $ev_{\psi_2}$ are both $C^r$ by assumption, so is $ev_\psi$. \end{proof}

This lemma means that we can assemble our $C^\infty$ family in the obvious way, namely by composing a finite collection of smaller $C^\infty$ families supported in (and defined via) charts. Moreover, working within charts it is easy to produce families which \emph{locally} satisfy the desired properties (A1) and (A2); see Proposition \ref{prop:localcon} below. The next lemma gives us a criterion for ensuring that a composition of such families satisfies (A1) globally.

\begin{lem} \label{lem:compsub}
    Let $(\mathcal{H}_1, \psi_1), \dots, (\mathcal{H}_k, \psi_k)$ be $C^r$ families of diffeomorphisms of $M$, with $\bd \mc{H}_i = \varnothing$ for all $1 \leq i \leq k$. Suppose that for all $i = 1, \dots, k$, $h_i \in \mathcal{H}_i$, and $p \in M$, we have
    $$
    \psi_i(h_i)(p) \neq p  \ \ \Longrightarrow \ \ d(ev_{\psi_i, p})_{h_i} \text{ is surjective, }
    $$
    where $ev_{\psi_i, p} := ev_{\psi_i}(\sbullet, p) : \mathcal{H}_i \to M$. Finally, suppose that for all $p \in M$, there exists $j$ such that $ev_{\psi_j, p}$ is a submersion, i.e. such that $d(ev_{\psi_j, p})_{h_j}$ is surjective for all $h_j \in \mc{H}_j$. Then letting $\mc{H} := \mc{H}_1 \times \cdots \mc{H}_k$, the map
    \begin{align*}
    \psi : \mc{H} &\longrightarrow \Diff^r(M) \\
    (h_1, \dots, h_k) &\longmapsto \psi_k(h_k) \circ \cdots \circ \psi_1(h_1)
    \end{align*}
    defines a $C^r$ family of diffeomorphisms of $M$, and $ev_{\psi, p} := ev_\psi(\cdot, p) : \mc{H} \to M$ is a submersion for all $p \in M$.
\end{lem}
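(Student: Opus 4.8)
The plan is to combine Lemma \ref{lem:compCr} with a pointwise chain-rule computation. First I would note that Lemma \ref{lem:compCr} (applied inductively to the $k$ families) immediately gives that $(\mc{H}, \psi)$ is a $C^r$ family of diffeomorphisms of $M$, so the only real content is showing that $ev_{\psi, p}$ is a submersion for every $p \in M$.

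Fix $p \in M$ and a point $(h_1, \dots, h_k) \in \mc{H}$. Write $q_0 := p$ and $q_i := \psi_i(h_i) \circ \cdots \circ \psi_1(h_1)(p)$ for $1 \leq i \leq k$, so that $q_i = \psi_i(h_i)(q_{i-1})$ and $ev_{\psi, p}(h_1, \dots, h_k) = q_k$. Differentiating the composition at $(h_1, \dots, h_k)$ and using that each $\psi_i(h_i)$ is a diffeomorphism, the differential $d(ev_{\psi, p})_{(h_1,\dots,h_k)} : \bigoplus_{i=1}^k T_{h_i}\mc{H}_i \to T_{q_k} M$ restricted to the $i$-th summand factors as
$$
\left( d\big[\psi_k(h_k) \circ \cdots \circ \psi_{i+1}(h_{i+1})\big]_{q_i} \right) \circ \ d(ev_{\psi_i, q_{i-1}})_{h_i},
$$
and the first factor is a linear isomorphism $T_{q_i} M \iso T_{q_k} M$. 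Hence the image of $d(ev_{\psi, p})_{(h_1,\dots,h_k)}$ contains the image of $d(ev_{\psi_i, q_{i-1}})_{h_i}$ pushed forward by an isomorphism, for each $i$; in particular $d(ev_{\psi, p})$ is surjective as soon as $d(ev_{\psi_i, q_{i-1}})_{h_i}$ is surjective for at least one $i$.

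It then remains to argue that such an $i$ always exists. By hypothesis there is a $j$ with $ev_{\psi_j, q}$ a submersion for every $q \in M$; taking $i = j$ and $q = q_{j-1}$ shows $d(ev_{\psi_j, q_{j-1}})_{h_j}$ is surjective, so we are done. (In fact the intermediate hypothesis — that $\psi_i(h_i)(q) \neq q$ forces surjectivity of $d(ev_{\psi_i, q})_{h_i}$ — is not needed for this particular statement; it presumably enters only in the later global construction, e.g. to allow some $\psi_i$ to be locally supported. I would remark on this rather than use it.) I expect the only mild subtlety is bookkeeping the chain rule cleanly — specifically checking that partial derivatives in the $\mc{H}_i$-directions of the nested evaluation map really do compose as stated, which follows formally from $ev_\psi(h_1,\dots,h_k,p) = ev_{\psi_k}(h_k, ev_{\psi_{k-1}}(h_{k-1}, \cdots))$ together with the fact that $\partial_p$-derivatives of each $ev_{\psi_i}$ are the isomorphisms $d(\psi_i(h_i))$.
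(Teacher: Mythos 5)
Your reduction of the problem to the chain-rule decomposition
$\partial_{h_i}(ev_{\psi,p})_h = d\big[\psi_k(h_k)\circ\cdots\circ\psi_{i+1}(h_{i+1})\big]_{q_i}\circ d(ev_{\psi_i,q_{i-1}})_{h_i}$
is correct and is exactly the paper's computation. But the final step contains a genuine gap caused by a misreading of the quantifiers in the last hypothesis. The hypothesis is: \emph{for each} $p\in M$ there exists $j$ (depending on $p$) such that $ev_{\psi_j,p}$ is a submersion. You have read it as: there exists a single $j$ such that $ev_{\psi_j,q}$ is a submersion for \emph{every} $q\in M$. Under the correct reading, the good index $j$ only gives you surjectivity of $d(ev_{\psi_j,p})_{h_j}$ at the \emph{original} basepoint $p$, whereas your decomposition requires surjectivity of $d(ev_{\psi_j,q_{j-1}})_{h_j}$ at the \emph{intermediate} point $q_{j-1}=\psi_{j-1}(h_{j-1})\circ\cdots\circ\psi_1(h_1)(p)$, which may differ from $p$ and at which $ev_{\psi_j,\,\cdot\,}$ need not submerse. (In the intended application each $\psi_i$ is supported in a chart and submerses only for basepoints in that chart, so the quantifier order is essential.)

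This is precisely where the hypothesis you propose to discard is needed. The paper's argument takes $\ell$ to be the \emph{minimal} index with $d(ev_{\psi_\ell,p})_{h_\ell}$ surjective (nonempty by the last hypothesis applied to $p$); for $i<\ell$ the contrapositive of the condition ``$\psi_i(h_i)(p)\neq p \Rightarrow d(ev_{\psi_i,p})_{h_i}$ surjective'' forces $\psi_i(h_i)(p)=p$, hence inductively $q_i=p$ for all $i<\ell$, so that $q_{\ell-1}=p$ and the $\ell$-th summand of the differential is $(dF_\ell)_{q_\ell}\circ d(ev_{\psi_\ell,p})_{h_\ell}$, which is surjective. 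So your remark that the intermediate hypothesis ``is not needed for this particular statement'' is incorrect: it is the mechanism that reconciles the basepoint at which surjectivity is known with the basepoint at which the chain rule evaluates it. To repair your proof, replace the last paragraph with this minimal-index argument.
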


\begin{proof}
    The fact that $(\mc{H}, \psi)$ is a $C^r$ family of diffeomorphisms of $M$ follows inductively from Lemma \ref{lem:compCr}. Now let $p \in M$ and $h = (h_1, \dots, h_k) \in \mathcal{H}$. We claim that 
    $$
    d(ev_{\psi, p})_h : T_h \mc{H}  
    \xrightarrow[]{ \ \ \ \ } T_q M
    $$
    is surjective, where $q = ev_{\psi,p}(h) = \psi(h)(p) = \psi_k(h_k) \circ \cdots \circ \psi_1(h_1)(p).$ Clearly it suffices to show that 
    $$
    \partial_{h_j} (ev_{\psi, p})_h : T_{h_j} \mc{H}_j \xrightarrow[]{ \ \ \ \ } T_q M
    $$
    is surjective for some $1 \leq j \leq k$. 

    To compute these derivatives, write $p_0 := p$ and $p_i = \psi_i(h_i) \circ \cdots \circ \psi_1(h_1)(p)$ for $i \geq 1$; in particular $p_k = q$. Setting $F_j = \psi_k(h_k) \circ \cdots \circ \psi_{j+1}(h_{j+1})$, we then have
    \begin{align*}
        \partial_{h_j}(ev_{\psi,p})_h = (dF_j)_{p_j} \circ d(ev_{\psi_j, p_{j-1}})_{h_j}.
    \end{align*}
    
    Now set $\ell = \min \ \{ 1 \leq i \leq k \ | \ d(ev_{\psi_i, p})_{h_i} \text{ is surjective} \}$. Then by our assumption we must have $\psi_i(h_i)(p) = p$, and thus inductively $p_i = p$, for all $i < \ell$. Applying the above formula gives
    \begin{align*}
        \partial_{h_\ell}(ev_{\psi,p})_h = (dF_\ell)_{p_\ell} \circ d(ev_{\psi_\ell, p_{\ell-1}})_{h_j} = (dF_\ell)_{p_\ell} \circ d(ev_{\psi_\ell, p})_{h_\ell}. 
    \end{align*}
    By the definition of $\ell$, $d(ev_{\psi_\ell, p})_{h_\ell}$ is surjective; since $F_\ell$ is a diffeomorphism, it follows that $\partial_{h_\ell}(ev_{\psi,p})_h : T_{h_\ell} \mc{H}_\ell \to T_{p_k} M$ is surjective as well, so we are done. 
\end{proof}

We can now proceed with the construction of $\mathcal{H}$. Our approach is the ``obvious" one, beginning with the observation that $\text{Isom}^+(\RR^n) \cong SO(n,\RR) \ltimes \RR^n$ acts smoothly and transitively on $\gr_k(T\RR^n) \cong \gr_k(\RR^n) \times \RR^n$ for all $0 \leq k \leq n-1$. In particular, for every $\sigma \in \gr_k(T\RR^n)$, $ev_\sigma : SO(n, \RR) \times \RR^n \to \gr_k(T\RR^n)$ is a surjective submersion. To realize these properties inside an arbitrary manifold $M$, we first ``cut off" this action to a smooth family of (uniformly) compactly supported diffeomorphisms of $\RR^n$, which satisfies the submersion property (A1) on the interior of its support, and ``acts transitively" on the restrictions of the Grassmann bundles to some slightly smaller ball. Using charts this family can be be realized inside any coordinate ball in $M$ by a locally supported family of diffeomorphisms. The lemmas above then allow us to obtain the desired ``action" of $\RR^N$ on $M$ by composing sufficiently many such families.

 \subsection{Step 1: Local construction.} 

 Choose $\beta \in C^\infty(\RR_{\geq 0}, [0, 1])$ such that 
\begin{itemize}
    \item $\beta|_{[0,2]} \equiv 1$
    \item $\beta|_{[3,+\infty)} \equiv 0$
    \item $\beta'(x) < 0$ for all $x \in (2, 3)$.
\end{itemize}

Define
 $$
 L : \mathfrak{so}_n \longrightarrow \Diff_c^\infty(\mathbb{R}^n, 0)
 $$
 by
 $$
 L(v)(p) = \exp(\beta(|p|)v)(p),
 $$
 where $\exp : \mathfrak{so}_n \to SO(n,\RR)$ denotes the Lie exponential. Thus $L(v)$ preserves each sphere $\mathbb{S}^{n-1}_r := \{ x \in \RR^n \ : \ |x| = r \}$, and acts on $\mathbb{S}^{n-1}_r$ by $\exp(\beta(r)v) \in SO(n, \RR)$. In particular, $L(v)|_{\BB^n_2} = \exp(v)$, and $\supp(L(v)) = \overline{\BB}^n_3$ for all nonzero $v \in \mathfrak{so}_n$. 

 Define smooth vector fields $V_1, \dots, V_n \in \Gamma^\infty_c(\RR^n) = C^\infty_c(\RR^n, \RR^n)$ by 
$$
V_i(p) = \beta(|p|)\mathbf{e}_i, \ 1 \leq i \leq n.
$$
For each $i$ let $h_i^t$ denote the flow associated to $V_i$. By composing these flows, we obtain a map
\begin{align*}
\tau : \RR^n &\longrightarrow \Diff_c^\infty(\RR^n) \\
(t_1, \dots, t_n) &\longmapsto h_n^{t_n} \circ \cdots \circ h_1^{t_1}.
\end{align*}
Finally, we define 
$$
\psi : \RR^n \times \mathfrak{so}_n \longrightarrow \Diff_c^\infty(\RR^n)
$$
by
$$
\psi(t, v) = \tau(t) \circ L(v). 
$$ 

\begin{prop} \label{prop:localcon}
    The map
    \begin{align*}
        ev_{\psi} : (\RR^n \times \mathfrak{so}_n) \times \RR^n &\longrightarrow \RR^n \\
        (t, v, p) &\longmapsto \psi(t,v)(p)
    \end{align*}
    is $C^\infty$, and for all $p \in \BB^n_3$, the map $ev_{\psi, p} = ev_\psi(\cdot, p)$ is a submersion. Moreover, for all $0 \leq k \leq n-1$ and $\sigma_p, \sigma_q \in \gr_k(T\BB^n)$, we have
    $$
    d(\psi(t,v))(\sigma_p) = \sigma_q.
    $$
    for some $(t, v) \in \BB^n_2 \times B_{\mathfrak{so}_n}(0,R)$, where $R > 0$ is such that $\exp(B_{\mathfrak{so}_n}(0,R)) = SO(n,\RR)$.
\end{prop}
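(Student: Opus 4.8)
The plan is to verify the three assertions of Proposition \ref{prop:localcon} in turn, each being a direct unwinding of the definitions of $L$, $\tau$, and $\psi$.

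\textbf{Smoothness of $ev_\psi$.} First I would note that $ev_\psi$ is built from three pieces: the evaluation $(t,p) \mapsto \tau(t)(p)$ of the composed flows, the evaluation $(v,p) \mapsto L(v)(p) = \exp(\beta(|p|)v)(p)$, and their composition. The flows $h_i^t$ of the smooth compactly supported vector fields $V_i$ depend smoothly on $(t,p)$ jointly by standard ODE theory, so $\tau$ gives a $C^\infty$ family; the map $(v,p) \mapsto \exp(\beta(|p|)v)(p)$ is a composite of smooth maps ($\beta$ is $C^\infty$, $\exp$ is real-analytic, the $SO(n,\RR)$-action on $\RR^n$ is smooth), hence $C^\infty$; and then $ev_\psi(t,v,p) = ev_\tau(t, ev_L(v,p))$ is a composite of smooth maps, so it is $C^\infty$.

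\textbf{The submersion property on $\BB^n_3$.} For $p \in \BB^n_3$, I would exploit the translational part $\tau$. Fix $(t,v,p)$ and set $q_0 = L(v)(p)$ and $q = \tau(t)(q_0)$. Differentiating in the $t$-directions at $t=0$ (more precisely, using the standard formula for the derivative of a composition of flows) one gets that the partial derivative $\partial_t ev_\psi$ spans the image under $d\tau(t)$ of $\mathrm{span}\{V_1(q_0'), \dots, V_n(q_0')\}$ at the appropriate intermediate point — the key point being that $V_i(x) = \beta(|x|)\mathbf{e}_i$ and $\beta > 0$ on $[0,3)$, so at any point with norm $< 3$ the vectors $V_1, \dots, V_n$ span $\RR^n$. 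Since $d\tau(t)$ is a linear isomorphism (as $\tau(t)$ is a diffeomorphism) and the relevant intermediate points stay in norm $< 3$ because $\tau(t)$ and $L(v)$ are supported in $\overline{\BB}^n_3$ (so they cannot push a point of norm $< 3$ outward — here one checks that both $L(v)$ and each $h_i^t$ preserve the region $\BB^n_3$, which holds since $\beta$ vanishes outside $[0,3]$ forcing the boundary sphere to be fixed), the differential $d(ev_{\psi,p})_{(t,v)}$ is surjective onto $T_q\RR^n$. The main subtlety here is being careful about which intermediate point the vectors $V_i$ are evaluated at and confirming it has norm $< 3$; this requires the observation that flows of vector fields vanishing outside $\overline{\BB}^n_3$ leave $\BB^n_3$ invariant.

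\textbf{Transitivity on Grassmannians over $\BB^n$.} Here I would use the rotational part $L$. Given $\sigma_p, \sigma_q \in \gr_k(T\BB^n)$ with base points $p, q \in \BB^n$, write $\sigma_p \in \gr_k(T_p\RR^n)$ and $\sigma_q \in \gr_k(T_q\RR^n)$. Since $\mathrm{Isom}^+(\RR^n) = SO(n,\RR) \ltimes \RR^n$ acts transitively on $\gr_k(T\RR^n)$ for $k \le n-1$, choose first a translation and rotation carrying $p$ to $q$ and $\sigma_p$ to $\sigma_q$; but I need to realize this using only $\psi(t,v)$ with $(t,v) \in \BB^n_2 \times B_{\mathfrak{so}_n}(0,R)$. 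On $\BB^n_2$ we have $L(v) = \exp(v)$ acting as the genuine linear rotation $\exp(v) \in SO(n,\RR)$, and $\tau(t)$ for small $t$ acts as honest translation by $t$ near the origin (since $\beta \equiv 1$ on $[0,2]$, the vector fields $V_i$ equal $\mathbf{e}_i$ there, so their time-$t_i$ flows are translations as long as the trajectory stays in $\BB^n_2$). So: a rotation $A = \exp(v)$ with $v \in B_{\mathfrak{so}_n}(0,R)$ sending the $k$-plane direction of $\sigma_p$ to that of $\sigma_q$ exists by the assumption $\exp(B_{\mathfrak{so}_n}(0,R)) = SO(n,\RR)$; then a translation by some $t$ with $|t|$ small moves the base point to $q$. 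The one genuine obstacle is bookkeeping the order of operations and the domains: $\psi(t,v) = \tau(t) \circ L(v)$ applies the rotation first, so I should pick $v$ so that $\exp(v)$ sends $\sigma_p$ to a $k$-plane parallel to $\sigma_q$ (note $d(\exp(v))$ is constant, so this only involves the linear action on $\gr_k(\RR^n)$, and $p \in \BB^n$ stays in $\BB^n$ under a rotation so $L(v) = \exp(v)$ applies there), and then choose $t = q - \exp(v)(p)$, which has norm at most $|q| + |p| < 2$, so $t \in \BB^n_2$ and moreover the straight-line translation trajectory from $\exp(v)(p)$ to $q$ stays in $\BB^n_2$ (both endpoints have norm $< 1$, so the segment does too), confirming $\tau(t)$ acts as pure translation along it and hence $d\tau(t) = \id$ on tangent vectors there. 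Thus $d(\psi(t,v))(\sigma_p) = d\tau(t)\, d(\exp v)(\sigma_p) = \sigma_q$, as desired.
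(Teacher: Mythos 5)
Your treatment of smoothness and of the Grassmannian transitivity follows the paper's, but the submersion step contains a genuine gap. You assert that $\partial_t\, ev_\psi$ has image $d\tau(t)\bigl(\operatorname{span}\{V_1(q_0'),\dots,V_n(q_0')\}\bigr)$ for a single intermediate point and a single pushforward. The chain rule actually gives
$$
\partial_{t_j}\bigl(\psi(t,v)(p)\bigr) \;=\; d\bigl(h_n^{t_n}\circ\cdots\circ h_{j+1}^{t_{j+1}}\bigr)_{q_j}\bigl(V_j(q_j)\bigr),
\qquad q_j = \bigl(h_j^{t_j}\circ\cdots\circ h_1^{t_1}\bigr)\bigl(L(v)(p)\bigr),
$$
so both the evaluation point and the pushforward depend on $j$. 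Knowing that the vectors $V_j(q_j)=\beta(|q_j|)\mathbf{e}_j$ are nonzero and span, and that each pushforward is an isomorphism, does not yield surjectivity: $n$ independent vectors pushed forward by $n$ \emph{different} isomorphisms need not remain independent. The missing ingredient, which the paper supplies, is structural: each flow $h_i^t$ moves points only in the $\mathbf{e}_i$ direction, so $d\bigl(h_n^{t_n}\circ\cdots\circ h_{j+1}^{t_{j+1}}\bigr)(\mathbf{e}_j)=\mathbf{e}_j+\sum_{i>j}\alpha_{i,j}\mathbf{e}_i$. Hence the matrix $\partial_t(ev_{\psi,p})_{(t,v)}$ is lower triangular with diagonal entries $\beta(|q_1|),\dots,\beta(|q_n|)>0$ (using $q_j\in\BB^n_3$), and it is this triangularity, not the spanning of the $V_j$ alone, that forces surjectivity.

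A smaller imprecision in the last part: $\tau(t)$ is a composition of coordinate-wise flows, so the trajectory of $Ap$ under it is the staircase path through the corners $(q_1,\dots,q_j,r_{j+1},\dots,r_n)$, not the straight segment from $Ap$ to $q$. It is these corners (of norm at most $|q|+|r|<2$) and the legs between them that must be shown to lie in $\BB^n_2$; the estimate is just as easy as the one you give for the chord, but it is the staircase that needs controlling. With that correction, your transitivity argument coincides with the paper's.
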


\begin{proof}
    To see that $ev_\psi$ is smooth, first observe that smoothness of 
    \begin{align*}
        ev_{L} : \mathfrak{so}_n \times \RR^n &\longrightarrow \RR^n \\
        (v, p) &\longmapsto L(v)(p) = \exp(\beta(|p|)v)(p)
    \end{align*}
    follows from that of the Lie exponential $\exp$ and $\beta$. Also, since each $V_i$ is smooth,
    \begin{align*}
        ev_{i} : \RR \times \RR^n &\longrightarrow \RR^n \\
        (t, p) &\longmapsto h_i^t(p)
    \end{align*}
    is smooth for all $1 \leq i \leq n$. Smoothness of $ev_\psi$ then follows inductively from Lemma \ref{lem:compCr}.

    Now let $p \in \BB^n_3$ and $(t, v) \in \RR^n \times \mathfrak{so}_n$. Writing $t = (t_1, \dots, t_n)$, we set $q_0 = L(v)(p)$ and
    $$
    q_j = ( h_j^{t_j} \circ \cdots \circ h_1^{t_1} )(q)
    $$
    for $1 \leq j \leq n$; in particular, $q_n = \psi(t,v)(p)$. Observe that since every $L(v)$ and $h_j^{t_j}$ preserves $\BB^n_3$, $q_i \in \BB^n_3$ for all $i$. 

    We would like to show that that 
    $$
    d(ev_{\psi,p})_{(t,v)} : T_t \RR^n \times T_v \mathfrak{so}_n \longrightarrow T_{q_n} \RR^n
    $$
    is surjective. Clearly it suffices to show that this is true for 
    $$
    \partial_t(ev_{\psi,p})_{(t,v)} : T_t \RR^n \longrightarrow T_{q_n} \RR^n.
    $$
    Fix $1 \leq j \leq n$. We compute
    \begin{align*}
    \partial_{t_j}( \psi(t,v)(p) ) = \partial_{t_j}( \tau(t)(q_0) )
    &= \partial_{t_j} ( h_n^{t_n} \circ \cdots \circ h_1^{t_1}(q_0) ) \\
    &= \partial_{t_j} ( h_n^{t_n} \circ \cdots \circ h_j^{t_j}(q_{j-1}) ) \\
    &= d(h_n^{t_n} \circ \cdots \circ h_{j+1}^{t_{j+1}} ) \left( \partial_{t_j}(h_j^{t_j}(q_{j-1})) \right) \\
    &= d(h_n^{t_n} \circ \cdots \circ h_{j+1}^{t_{j+1}} ) \left( V_j(q_j) \right) \\
    &= d(h_n^{t_n} \circ \cdots \circ h_{j+1}^{t_{j+1}} ) \left( \beta(|q_j|) \mathbf{e}_j \right) \\
    &= \beta(|q_j|) \cdot d(h_n^{t_n} \circ \cdots \circ h_{j+1}^{t_{j+1}} ) \left( \mathbf{e}_j \right) \\
    &= \beta(|q_j|) \left( \mathbf{e}_j + \alpha_{j+1, j} \mathbf{e}_{j+1} + \cdots + \alpha_{n, j} \mathbf{e}_{n}  \right),
    \end{align*}
for some $\alpha_{i, j} \in \RR$. In particular, $\partial_t(ev_{\psi,p})_{(t,v)}$ is lower triangular, with diagonal entries $\beta(|q_1|), \dots, \beta(|q_n|)$. Since $|q_j| < 3$ and thus $\beta(|q_j|) > 0$ for all $1 \leq j \leq n$, it follows that $\partial_t(ev_{\psi,p})_{(t,v)}$ is surjective.

For the final statement, let $\sigma_p, \sigma_q \in \gr_k(T\BB^n) = \BB^n \times \gr_k(\RR^n)$, say $\sigma_p = (p, \sigma_1), \sigma_q = (q, \sigma_2)$. Pick $A \in SO(n, \RR)$ such that $A\sigma_1 = \sigma_2$ and $v \in B_{\mathfrak{so}_n}(0, R)$ such that $\exp(v) = A$. Since $L(v)|_{\BB^n} = A$ we have 
$$
d(L(v))(\sigma_p) = (Ap, A\sigma_1) = (Ap, \sigma_2).
$$
Observe that, for all $1 \leq j \leq n$, if $b$ and $h_j^{t_j}(b) \in \BB^n_2$, then $h_j^{t_j} : x \mapsto x + t_j \mathbf{e}_j$ in a neighborhood of $b$, so that $d(h_j^{t_j})_b = \id$. Writing $q = (q_1, \dots, q_n)$, $r = (r_1, \dots, r_n) = Ap$, and $t = (t_1, \dots, t_n) = q - r \in \BB^n_2$, for all $0 \leq j \leq n$  we have 
$$
h_j^{t_j} \circ \cdots \circ h_1^{t_1} (r) = (q_1, \dots, q_j, r_{j+1}, \dots, r_n).
$$
Thus $|h_j^{t_j} \circ \cdots \circ h_1^{t_1} (Ap)| \leq |q| + |r| < 2$. By the comment above, we inductively conclude that
$$
d(h_n^{t_n} \circ \cdots \circ h_1^{t_1})_{Ap} = \id.
$$
It follows that 
\begin{align*}
d(\psi(t,v))(\sigma_p) &= d(h_n^{t_n} \circ \cdots \circ h_1^{t_1}) \circ d(L(v))(\sigma_p) \\
&= d(h_n^{t_n} \circ \cdots \circ h_1^{t_1})(Ap, \sigma_2) \\
&= (q, \sigma_2) = \sigma_q, 
\end{align*} 
so we are done.
\end{proof}

 \subsection{Step 2: Patching together.} Choose a finite collection $\{ (U_i, \phi_i) \}_{i=1}^L$ of smooth charts $\phi_i : U_i \xrightarrow[]{ \ \ \cong \ \ } \BB^n_4$ for $M$ such that
 $$
 M = \ds\bigcup_{i=1}^L \phi_i^{-1}(\BB^n_1). 
 $$
 Set $W_i = \phi_i^{-1}(\BB^n_1)$, and let $D$ denote the diameter of the \v{C}ech $1$-complex associated to the cover $\{ W_i \}_{i=1}^L$ of $M$.\footnote{Explicitly, $D$ is the smallest number such that for all $1 \leq k, \ell \leq L$, there exist $i_0, i_1, \dots, i_{D-1}, i_D \in \{1, \dots, L\}$ such that $i_0 = k$, $i_D = \ell$, and $W_{i_{j}} \cap W_{i_{j+1}} \neq \varnothing$ for all $0 \leq j \leq D-1$.} Since $M$ is connected clearly we have $D \leq L-1 < \infty$.
 
 Now for each $i = 1, \dots, L$ we can define
 $$
 \psi_i : \RR^{n} \times \mathfrak{so}_n \longrightarrow \Diff^\infty(M)
 $$
 by 
\begin{equation*}
\psi_i(t, v) =
    \begin{cases}
        \phi_i^{-1} \circ \psi(t, v) \circ \phi_i & \text{on } U_i \\
        \text{id} & \text{on } M \setminus U_i. 
    \end{cases}
\end{equation*}
Then define
$$
\widehat{\psi} : \left( \RR^{n} \times \mathfrak{so}_n \right)^L \longrightarrow \Diff^\infty(M)
$$
by $\wh{\psi} = \operatorname{comp} \circ (\psi_1 \times \cdots \times \psi_L)$, i.e.
$$
\widehat{\psi}((t_1, v_1), \dots, (t_L, v_L)) = \psi_L(t_L, v_L) \circ \cdots \circ \psi_1(t_1, v_1).
$$
Finally, define 
$$
\Psi : ( ( \RR^{n} \times \mathfrak{so}_n )^L )^{D+1} \to \Diff^\infty(M)
$$
by 
$$
\Psi(q_1, \dots, q_{D+1}) = \widehat{\psi}(q_{D+1}) \circ \cdots \circ \widehat{\psi}(q_1).
$$

For convenience, let $N = \left( 2n + {n \choose 2} \right) (D+1) L$, and fix some isomorphism $( ( \RR^{2n} \times \mathfrak{so}_n )^L )^{D+1} \cong \RR^N$. Then we have

\begin{prop}
    The evaluation map
    $$
    ev_\Psi : \RR^N \times M \longrightarrow M \times M
    $$ 
    associated to $\Psi$ is a $C^\infty$ submersion. Moreover, if for all $0 \leq k \leq n$ we define 
    \begin{align*}
    \widehat{ev}^k_\Psi : \RR^n \times \gr_k(TM) &\longrightarrow \gr_k(TM) \times \gr_k(TM) \\
    (w, \sigma) &\longmapsto (\sigma, d(\Psi(w))(\sigma)), 
    \end{align*}
    then $\widehat{ev}^k_\Psi\big|_{\BB^N_R \times \gr_k(TM)}$ is surjective for all $R$ sufficiently large. 
\end{prop}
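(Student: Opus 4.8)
The plan is to check, in order, that $ev_\Psi$ is $C^\infty$, that $ev_\Psi$ is a submersion, and that each $\widehat{ev}^k_\Psi$ is surjective on a ball of uniform radius; the first two are essentially formal given Lemmas \ref{lem:compCr} and \ref{lem:compsub} and Proposition \ref{prop:localcon}, so the real content is in the third. For smoothness, first note that each $\psi_i$ defines a $C^\infty$ family of diffeomorphisms of $M$: the map $ev_{\psi_i}$ agrees with the $C^\infty$ map $(t,v,p)\mapsto\phi_i^{-1}(ev_\psi(t,v,\phi_i(p)))$ on $(\RR^n\times\mathfrak{so}_n)\times U_i$ (using Proposition \ref{prop:localcon}) and with the projection $(t,v,p)\mapsto p$ on $(\RR^n\times\mathfrak{so}_n)\times(M\smallsetminus\phi_i^{-1}(\overline{\BB}^n_3))$; these two open sets cover the domain because $\supp\psi(t,v)\subseteq\overline{\BB}^n_3\subseteq\BB^n_4$, and the two formulas agree where both apply. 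By induction using Lemma \ref{lem:compCr}, $\widehat\psi$ and then $\Psi$ also define $C^\infty$ families, so $ev_\Psi$ is $C^\infty$.

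For the submersion property I would invoke Lemma \ref{lem:compsub} with $\psi_1,\dots,\psi_L$. Its first hypothesis holds because $\psi_i(t,v)(p)\neq p$ forces $p\in U_i$ and $\phi_i(p)\in\BB^n_3$ --- otherwise $\psi(t,v)$ would fix $\phi_i(p)$, since $\supp\psi(t,v)\subseteq\overline{\BB}^n_3$ and $\psi(t,v)$ fixes the sphere of radius $3$ --- so $d(ev_{\psi_i,p})_{(t,v)}$ is surjective by Proposition \ref{prop:localcon}, surjectivity being preserved under conjugation by $\phi_i$. Its second hypothesis holds because every $p$ lies in some $W_i=\phi_i^{-1}(\BB^n_1)\subseteq\phi_i^{-1}(\BB^n_3)$, for which $ev_{\psi_i,p}$ is a submersion, again by Proposition \ref{prop:localcon}. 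Thus $ev_{\widehat\psi,p}$ is a submersion for every $p$; a second application of Lemma \ref{lem:compsub} to the $D+1$ copies of $\widehat\psi$ --- whose hypotheses are now automatic, $ev_{\widehat\psi,p}$ being a submersion everywhere --- shows $ev_{\Psi,p}:\RR^N\to M$ is a submersion for every $p\in M$. By the equivalent form of $\Aone$ recorded after Theorem \ref{thm:main}, $ev_\Psi:\RR^N\times M\to M\times M$ is then a submersion.

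The surjectivity of $\widehat{ev}^k_\Psi$ is the heart of the matter. Fix $1\le k\le n-1$ and $\sigma_p,\sigma_q\in\gr_k(TM)$ with base points $p,q$. By the definition of $D$ choose indices $i_0,\dots,i_D$ with $p\in W_{i_0}$, $q\in W_{i_D}$, and $W_{i_{j-1}}\cap W_{i_j}\neq\varnothing$ for $1\le j\le D$; pick $x_j\in W_{i_{j-1}}\cap W_{i_j}$ and set $x_0:=p$. I would then build $w=(q_1,\dots,q_{D+1})$ so that for $1\le j\le D$ the factor $\widehat\psi(q_j)$ equals $\psi_{i_{j-1}}(t_j,v_j)$ --- that is, all blocks of $q_j$ other than the $i_{j-1}$-th are set to $0$, which is harmless since $\psi_m(0,0)=\id_M$ --- with $(t_j,v_j)$ lying in the fixed box $\BB^n_2\times B_{\mathfrak{so}_n}(0,R_0)$ furnished by Proposition \ref{prop:localcon} in the chart $\phi_{i_{j-1}}$, chosen to carry the $k$-plane currently based at $x_{j-1}\in W_{i_{j-1}}$ to some $k$-plane based at $x_j\in W_{i_{j-1}}\cap W_{i_j}\subseteq W_{i_{j-1}}$; and the last factor $\widehat\psi(q_{D+1})$ equals $\psi_{i_D}(t_{D+1},v_{D+1})$, carrying the resulting $k$-plane based at $x_D\in W_{i_D}$ to $\sigma_q$, again by Proposition \ref{prop:localcon} in the chart $\phi_{i_D}$. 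At every step both the source and the target $k$-plane are based in the single coordinate ball $W_{i_{j-1}}$ (respectively $W_{i_D}$), which is exactly what makes Proposition \ref{prop:localcon} applicable; composing gives $d(\Psi(w))(\sigma_p)=\sigma_q$. Since each nonzero block lies in that fixed bounded box and at most $D+1$ blocks are nonzero, $\|w\|$ is bounded by a constant depending only on $n$, $D$, $L$, $R_0$, independently of $\sigma_p$ and $\sigma_q$; hence $\widehat{ev}^k_\Psi\big|_{\BB^N_R\times\gr_k(TM)}$ is surjective for all $R$ beyond that constant. For $k=0$ and $k=n$, where $\gr_k(T_xM)$ is a single point, surjectivity of $\widehat{ev}^k_\Psi$ on a ball is simply transitivity of $\Psi$ on $M$ with bounded parameter, which follows from the same chain-of-charts argument, using only the $k=0$ instance of Proposition \ref{prop:localcon} to relocate base points.

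I expect the main obstacle to be precisely the bookkeeping in this last step: tracking which parameter block is activated in which of the $D+1$ composition slots, and verifying at each stage that the current source and target $k$-planes lie over a common coordinate ball $W_i$ so that Proposition \ref{prop:localcon} genuinely applies. This is why $D$ must be the diameter of the \v{C}ech $1$-complex of the cover $\{W_i\}$ rather than of $\{U_i\}$ --- the intermediate base points $x_j$ must lie in the overlaps $W_{i_{j-1}}\cap W_{i_j}$, which are contained in both adjacent balls. The smoothness and submersion parts, by contrast, I expect to be entirely routine once the lemmas are in hand.
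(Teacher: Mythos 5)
Your proposal is correct and follows the same route as the paper, which simply asserts that the submersion claim "follows immediately from Lemma \ref{lem:compsub} and Proposition \ref{prop:localcon}" and that surjectivity "follows from the second half of Proposition \ref{prop:localcon} and the definition of $D$." You have merely written out in full the verifications (the support argument for the hypotheses of Lemma \ref{lem:compsub}, and the chain-of-charts bookkeeping through the overlaps $W_{i_{j-1}}\cap W_{i_j}$ with a uniformly bounded parameter) that the paper leaves implicit.
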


\begin{proof} The fact that $ev_\Psi$ is a smooth submersion follows immediately from Lemma \ref{lem:compsub} and Proposition \ref{prop:localcon}. The surjectivity of $\widehat{ev}^k_\Psi\big|_{\BB^N_R \times \gr_k(TM)}$ for $R \gg 0$ follows from the second half of Proposition \ref{prop:localcon} and the definition of $D$.
\end{proof}

\begin{cor}
    The hypotheses of Theorem \ref{thm:main} are satisfied by the family $(\overline{\BB}^N_R, \Psi)$.
\end{cor}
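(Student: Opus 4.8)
The plan is to check, clause by clause, that $(\overline{\BB}^N_R, \Psi|_{\overline{\BB}^N_R})$ satisfies the hypotheses of Theorem~\ref{thm:main} once $R$ is chosen large enough. The preceding proposition produces, for each $k$ with $0 \le k \le n$, a radius $R_k$ beyond which $\widehat{ev}^k_\Psi|_{\BB^N_R \times \gr_k(TM)}$ is surjective; since there are only finitely many such $k$, I would first fix $R := \max_{0 \le k \le n} R_k$, so that all $n+1$ surjectivity statements hold simultaneously for this single $R$.

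Next I would assemble the structural requirements from the definition of a $C^1$ family. The closed ball $\overline{\BB}^N_R$ is a compact smooth manifold with boundary $\mathbb{S}^{N-1}_R$ and interior $\BB^N_R$, and it inherits a Riemannian metric (hence a Riemannian density) from the Euclidean metric on $\RR^N$. Since $\Psi$ maps into $\Diff^\infty(M) \subset \Diff^1(M)$ and the associated evaluation maps (both the $M$-valued one and the $M \times M$-valued one $ev_\Psi$) are restrictions of globally $C^\infty$, hence $C^1$, maps by the preceding proposition, it follows that $(\overline{\BB}^N_R, \Psi|_{\overline{\BB}^N_R})$ is a compact $C^1$ family of diffeomorphisms of $M$ in the sense of the introduction.

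For (A1), the map $(w,p) \mapsto (p, \Psi(w)(p))$ on $\overline{\BB}^N_R \times M$ is the restriction of the submersion $ev_\Psi$ furnished by the preceding proposition. The only point to note is that the inclusion $\overline{\BB}^N_R \hookrightarrow \RR^N$ induces a linear isomorphism $T_w \overline{\BB}^N_R \cong T_w \RR^N$ at every point $w$, including boundary points (having a boundary affects which vectors point inward, not the vector space $T_w$ itself). Hence the differential of the restricted map at any $(w,p)$ is literally $d(ev_\Psi)_{(w,p)}$, which is onto; so (A1) holds. For (A2), with $\mc{H} = \overline{\BB}^N_R$ we have $\ucirc{\mc{H}} = \BB^N_R$, and the map $\ucirc{ev}_k : \BB^N_R \times \gr_k(TM) \to \gr_k(TM) \times \gr_k(TM)$, $(w,\sigma) \mapsto (\sigma, d(\Psi(w))(\sigma))$, is exactly $\widehat{ev}^k_\Psi|_{\BB^N_R \times \gr_k(TM)}$, which is surjective for every $0 \le k \le n$ by our choice of $R$. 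This is precisely (A2), completing the verification.

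There is really no substantive obstacle here: given the two preceding propositions, the corollary is a matter of matching definitions. If anything deserves care it is the tangent-space remark in the (A1) step --- one must be sure that passing from $\RR^N$ to the closed ball $\overline{\BB}^N_R$ does not spoil submersivity along the boundary sphere --- together with the observation that only finitely many values of $k$ are involved, so that a single radius $R$ suffices for all of (A2).
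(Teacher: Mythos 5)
Your proposal is correct and follows the same route the paper intends: the corollary is stated as an immediate consequence of the preceding proposition, obtained by restricting $\Psi$ to $\overline{\BB}^N_R$ for $R$ exceeding the finitely many thresholds $R_k$, with (A1) surviving restriction because $T_w \overline{\BB}^N_R = T_w \RR^N$ at every point and (A2) being exactly the surjectivity of $\widehat{ev}^k_\Psi$ on $\BB^N_R = \ucirc{\overline{\BB}^N_R}$. Your write-up simply makes explicit the definition-matching the paper leaves implicit.
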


\section{Appendix A: Submersions from Manifolds with Boundary}

Our inclusion of the case $\bd \mc{H} \neq \varnothing$ necessitates a discussion of some basic properties of submersions from compact Riemannian manifolds with boundary. These maps are messier than submersions from compact manifolds without boundary, essentially because their fibers can intersect the boundary badly. Most dramatically, the fibers need not be manifolds (even with boundary) in general. For similar reasons, submersions from manifolds with boundary need not be open maps at boundary points, and (most relevant for our purposes) the locally trivial fibration property enjoyed by proper submersions between manifolds without boundary (known as Ehresmann's theorem) fails completely in this setting.\footnote{All these pathologies are easily illustrated by maps of the form $\pi : D \to \RR$ where $D \subset \RR^2$ is a compact planar domain with smooth boundary curves and $\pi$ denotes (the restriction of) projection onto some one-dimensional subspace. One can of course also obtain examples with closed codomain by post-composing $\pi$ with a smooth covering map $\RR \to \mathbb{S}^1$.}

Here we circumvent these issues rather naively, by restricting our maps to the interiors of the manifolds in question. An obvious drawback of this approach is the loss of compactness. Nevertheless, this trick turns out to be sufficient for our purposes because the geometry of such maps is still tamer than that of submersions from general open manifolds. The purpose of this section is to state and prove the needed constraints. 

Let $f : M^m \to N^n$ be a $C^1$ map between smooth manifolds without boundary. Recall that a \emph{box chart} (or \emph{submersion chart}) adapted to $f$ on $M$ is a triple $(B, \phi, \psi)$ consisting of an open set $B \subset M$ and a pair of $C^1$ diffeomorphisms $\phi : B \to (-r,r)^m$ and $\psi : f(B) \to (-r,r)^n$ such that $\psi \circ f \circ \phi^{-1} : (-r, r)^m \to (-r,r)^n$ is given by projection onto the first $n$ coordinates. For convenience we will use the following nominal refinement of this notion: 

\begin{defn}
We say that a box chart $(B, \phi_B, \psi_B)$ is \emph{nice} if $B$ is precompact, and there is another box chart $(B', \phi_{B'}, \psi_{B'})$ such that $\overline{B} \subset B'$, $\phi_B = \phi_{B'}|_B$, and $\psi_B = \psi_{B'}|_{f(B)}$. In this case we say that the box chart $B'$ \emph{cocoons} $B$. 
\end{defn}

The fundamental fact about submersions from manifolds without boundary is that their domains can be covered by box charts. Observe that nothing is lost in this statement by restricting our attention to nice box charts, since for $f : M \to N$ a smooth map with $\bd M = \varnothing$ it is immediate that
\begin{align*}
f \text{ is a submersion } &\Longleftrightarrow f \text{ can be covered by box charts } \\ 
&\Longleftrightarrow f \text{ can be covered by nice box charts.}
\end{align*}

\noindent On the other hand, we have the following straightforward fact:

\begin{lem} \label{lem:plaques}
Let $f : M^m \to N^n$ be a $C^1$ submersion between smooth Riemannian manifolds without boundary and $(B, \phi_B, \psi_B)$ a nice box chart adapted to $f$. Then there are constants $0 < C_L^B < C_U^B$ such that 
$$
C_L^B \leq \vol_{m-n}(f^{-1}(q) \cap B) \leq C_U^B
$$
for all $q \in f(B)$.
\end{lem}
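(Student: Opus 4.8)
The statement is a compactness argument adapted to work on the precompact set $\overline{B}$. The plan is to work in the cocooning chart $B'$ (whose existence is exactly the point of the \emph{nice} hypothesis), so that on $\overline{B}$ we have uniform control of the Riemannian metric and of the smoothness of the trivialization. First I would transport everything via the diffeomorphisms $\phi_{B'}$ and $\psi_{B'}$ to the model situation: $B' \cong (-r',r')^m$, $f(B') \cong (-r',r')^n$ with $f$ corresponding to the projection $\mathrm{pr} : (x,y) \mapsto x$ (writing $x \in (-r',r')^n$, $y \in (-r',r')^{m-n}$), and $B$ corresponds to a precompact open box $(-r,r)^m$ with $r < r'$. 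Under this identification, for $q \in f(B)$ the fiber $f^{-1}(q) \cap B$ is the slice $\{q\} \times (-r,r)^{m-n}$, a flat $(m-n)$-cube; its volume with respect to the \emph{model} Euclidean metric is simply $(2r)^{m-n}$, independent of $q$.

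The content is therefore entirely in comparing the induced Riemannian volume on these slices with the Euclidean model volume. Let $g$ denote the pullback to $(-r',r')^m$ of the Riemannian metric on $M$ (via $\phi_{B'}^{-1}$). Since $g$ is continuous and $\overline{(-r,r)^m} \subset (-r',r')^m$ is compact, there are constants $0 < \lambda \leq \Lambda < \infty$ with $\lambda \, \|v\|_{\mathrm{eucl}}^2 \leq g_p(v,v) \leq \Lambda \, \|v\|_{\mathrm{eucl}}^2$ for all $p \in \overline{(-r,r)^m}$ and all $v$. Restricting to the tangent spaces of a slice $\{q\} \times (-r,r)^{m-n}$ and taking $(m-n)$-fold exterior powers, the Riemannian volume density of the slice is pinched between $\lambda^{(m-n)/2}$ and $\Lambda^{(m-n)/2}$ times the Euclidean density. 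Integrating over the flat cube, which has Euclidean $(m-n)$-volume $(2r)^{m-n}$, yields
$$
\lambda^{(m-n)/2} (2r)^{m-n} \ \leq \ \vol_{m-n}\!\left(f^{-1}(q) \cap B\right) \ \leq \ \Lambda^{(m-n)/2} (2r)^{m-n}
$$
for every $q \in f(B)$. Setting $C_L^B := \lambda^{(m-n)/2}(2r)^{m-n}$ and $C_U^B := \Lambda^{(m-n)/2}(2r)^{m-n}$ gives the claim, with $C_L^B > 0$ precisely because $\lambda > 0$ (which in turn uses precompactness of $\overline{B}$ inside $B'$, i.e. niceness) and $r > 0$.

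The only real subtlety — and the step I would be most careful about — is the uniformity of the metric bounds $\lambda, \Lambda$: this is where the \emph{nice} refinement of the box chart is used in an essential way. If one only had a plain box chart $(B,\phi_B,\psi_B)$, the metric could degenerate as one approaches $\partial B$, and no positive lower bound $C_L^B$ would be available (indeed slice volumes could tend to $0$). Cocooning by $B'$ moves the compact set $\overline{B}$ into the interior of the domain where the metric is defined and continuous, which is exactly what forces $\lambda > 0$. Everything else — the identification of fibers with flat cubes, and the comparison of volume densities via singular values of the metric tensor — is routine linear algebra and change of variables, and I would not belabor it.
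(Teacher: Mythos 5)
Your argument is correct and is essentially the paper's own proof: both reduce to the flat model via the cocooning chart, use compactness of $\overline{B}$ in $B'$ to get a uniform two-sided comparison between the Riemannian metric and the Euclidean one, and transfer the constant slice volume $(2r)^{m-n}$ through that comparison. Your remark on why niceness is essential (metric degeneration at $\partial B$ for a plain box chart) matches the intended role of the cocooning hypothesis.
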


\begin{proof}
Such an inequality holds trivially for the standard projection $\pi_n : \RR^m \to \RR^n$ and any (precompact) box $(-r, r)^m \subset \RR^m$, with $C_L = C_U = r^{m-n}$. For the general case, let $B$ be a nice box chart for $f$ and $(B', \phi_{B'}, \psi_{B'})$ a box chart cocooning $B$. By definition $\overline{B} \subset B'$ is compact and $\phi_B'$ is $C^1$; it follows that $\phi_{B'}\big|_{B} : B \hookrightarrow \RR^m$ distorts Riemannian metrics by a uniformly bounded amount, i.e. for some $C>1$ we have $C^{-1}g \leq (\phi_{B'})^*\eta \leq Cg$ on $B$, where $g$ denotes the metric on $M$ and $\eta$ the (Euclidean) metric on $\RR^m$. Moreover, for all $q \in f(B)$ we have
$$
f^{-1}(q) \cap B = \phi_{B'}^{-1} \big( \pi_n^{-1}(\psi_{B'}(q)) \cap \phi_{B'}(B) \big).
$$
The general result then follows trivially from the Euclidean case. \end{proof}

\begin{lem} \label{lem:appvolbounds}
Let $f : M^m \to N^n$ be a $C^1$ submersion between compact Riemannian manifolds, with $\bd M \neq \varnothing$ and $\bd N = \varnothing$. Then $\vol_{m-n}(f^{-1}(q) \cap \ucirc{M})$ is finite for all $q \in N$, and in fact uniformly bounded above. If $f\big|_{\ucirc{M}}$ is also surjective, then $\vol_{m-n}(f^{-1}(q) \cap \ucirc{M})$ is bounded away from zero. 
\end{lem}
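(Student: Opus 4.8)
The plan is to reduce both bounds to Lemma~\ref{lem:plaques} by covering suitable compact sets with finitely many nice box charts.

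For the upper bound I would first enlarge $M$ to an open manifold. Attaching an external collar $\bd M \times [0,1)$ and extending the Riemannian metric yields an open Riemannian manifold $\wh{M}$ isometrically containing $M$; since being a $C^1$ submersion is an open condition and $\bd M$ is compact, after shrinking the collar one can arrange that $f$ extends to a $C^1$ submersion $F : \wh{M} \to N$. Next, because $M$ is compact and every point of $\wh{M}$ has a neighborhood that is a nice box chart adapted to $F$ (shrink any box chart to a precompact subbox, which the original then cocoons), finitely many such charts $B_1, \dots, B_k$ cover $M$. For any $q \in N$ the fiber $F^{-1}(q)$ meets each $B_i$ in at most one plaque, whose $(m-n)$-volume is at most $C_U^{B_i}$ by Lemma~\ref{lem:plaques}; summing and using $F|_M = f$, $\ucirc{M} \subset M$ then gives
$$
\vol_{m-n}(f^{-1}(q) \cap \ucirc{M}) \ \leq \ \vol_{m-n}(F^{-1}(q) \cap M) \ \leq \ \sum_{i=1}^k C_U^{B_i},
$$
a bound independent of $q$. (Here $F^{-1}(q) \cap M$ need not itself be a manifold, but it sits inside a finite union of plaques, so its $(m-n)$-dimensional Hausdorff measure is controlled regardless.)

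For the lower bound, assuming $f|_{\ucirc{M}}$ surjective, I would instead work directly inside $\ucirc{M}$, which is a manifold without boundary on which $f$ restricts to a submersion. Each $x \in \ucirc{M}$ lies in a nice box chart $B_x \subset \ucirc{M}$ adapted to $f|_{\ucirc{M}}$; since submersions are open maps the sets $f(B_x)$ are open, and they cover $N$ by surjectivity, so by compactness of $N$ finitely many $f(B_{x_1}), \dots, f(B_{x_\ell})$ suffice. Then for each $q \in N$, choosing $j$ with $q \in f(B_{x_j})$ gives
$$
\vol_{m-n}(f^{-1}(q) \cap \ucirc{M}) \ \geq \ \vol_{m-n}\big(f^{-1}(q) \cap B_{x_j}\big) \ \geq \ C_L^{B_{x_j}}
$$
by Lemma~\ref{lem:plaques}, whence $\vol_{m-n}(f^{-1}(q) \cap \ucirc{M}) \geq \min_{1 \leq j \leq \ell} C_L^{B_{x_j}} > 0$ uniformly in $q$.

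The only step I expect to require real care is the collar extension in the upper bound: one must check that after attaching the collar and extending the metric, $f$ genuinely extends (over a perhaps thinner collar) to a submersion on the open manifold $\wh{M}$ — this is exactly where compactness of $\bd M$ and openness of the submersion condition enter. Everything else is bookkeeping with Lemma~\ref{lem:plaques} together with compactness of $M$ (upper bound) and of $N$ (lower bound). Since this is the same mechanism underlying the sketch of Proposition~\ref{prop:boundsprop}, I would organize the write-up so that this lemma and its proof can be invoked there with $\phi \equiv 1$.
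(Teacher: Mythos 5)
Your proposal is correct and follows essentially the same route as the paper's own proof: a collar extension of $f$ to a submersion on an open enlargement of $M$, a finite cover of $M$ by nice box charts, and the "at most one plaque per chart" count for the upper bound; and for the lower bound, a finite subcover of $N$ by open images of box charts in $\ucirc{M}$ so that each fiber contains a full plaque. The organizational suggestion at the end (proving this lemma first and invoking it with $\phi \equiv 1$) also matches how the paper structures Appendix A.
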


\begin{proof} Let $\mathcal{F}_q := f^{-1}(q) \cap \ucirc{M} = ( f|_{\ucirc{M}} )^{-1}(q)$, and note that, since $\ucirc{M}$ is a manifold without boundary and $f\big|_{\ucirc{M}}$ is a submersion, each $\mathcal{F}_q$ is either empty or an embedded $(m-n)$-dimensional submanifold of $\ucirc{M}$. Now define
$$
\nu(q) := \vol_{m-n}(\mathcal{F}_q) = \ds\int_{\mathcal{F}_q} d\mathcal{F}_q.
$$
A priori we have $\nu : N \to [0, +\infty]$. We first show that in fact $\sup \nu < +\infty$. 

\textbf{Upper bound:} By attaching a collar $C_\epsilon \cong \bd M \times [0, \epsilon)$ to $M$ and extending the metric, we obtain an open Riemannian manifold without boundary $\widehat{M}$ containing $M$ as an isometrically embedded compact submanifold. Choose an extension of $f$ to a $C^1$ map $F : \widehat{M} \to N$; since the set $\{ p \in \widehat{M} \ : \ dF_p\text{ is surjective} \}$ is an open set containing $M$, by shrinking the collar width $\epsilon$ we may assume that $F$ is a submersion as well.

 Now pick a cover $\mc{B}$ of $\widehat{M}$ by nice box charts adapted to $F$; by compactness of $M$ we can find a finite subset $\{B_1, \dots, B_k\} \subset \mc{B}$ such that $M \subset B_1 \cup \cdots \cup B_k$. By Lemma \ref{lem:plaques} there are $C_U^i < +\infty$, $1 \leq i \leq k$, such that $\vol(F^{-1}(q) \cap B_i) < C_U^i$ for all $q \in N$. Since clearly every fiber $F^{-1}(q)$ intersects each $B_j$ in at most one plaque, for all $q$ we have 
\begin{align*}
\nu(q) = \vol(f^{-1}(q) \cap \ucirc{M}) & \leq \vol \left( F^{-1}(q) \cap \left( \ds\bigcup_{j=1}^k B_j \right) \right) \\
& \leq \ds\sum_{j=1}^k \vol \left(F^{-1}(q) \cap B_j \right) \\
& \leq k \cdot \ds\max_{1 \leq j \leq k} C_U^j < +\infty.
\end{align*}
\textbf{Lower bound:} It remains to show that $\inf \nu > 0$ if $\ucirc{f} := f|_{\ucirc{M}}$ is surjective. This assumption implies that for all $q \in N$, there is a nice box chart $E_q \subset \ucirc{M}$ adapted to $\ucirc{f}$ such that $E_q \cap \ucirc{f}^{-1}(q) \neq \varnothing$. For each such $E_q$, $\ucirc{f}(E_q) = f(E_q) \subset N$ is an open neighborhood of $q$, and
$$
N = \ds\bigcup_{q \in N} f(E_q). 
$$
Since $N$ is compact we may extract a finite subcover $\{f(E_1), \dots, f(E_\ell)\}$; in particular, every fiber $\mc{F}_q = \ucirc{f}^{-1}(q)$ intersects and thus contains a fiber plaque in some $E_j$, $1 \leq j \leq \ell$. On the other hand, the lower bound in Lemma \ref{lem:plaques} implies that every plaque in any $E_j$ has volume at least $C_L := \min \{ C_L^{E_1}, \dots, C_L^{E_\ell} \} > 0$, so 
$$
\nu(q) = \vol(\ucirc{f}^{-1}(q)) \geq C_L > 0
$$
for all $q \in N$, as desired. \end{proof}

\begin{lem} \label{lem:appfiberbounds} Let $f, M, N$ be as in Lemma \ref{lem:appvolbounds}. Suppose $\phi \in C^0(M, \RR_{\geq 0})$ is such that for all $q \in N$, we have $\phi(p) > 0$ for some $p \in \mathcal{F}_q = f^{-1}(q) \cap \ucirc{M}$. Then there exists $C_\phi > 1$ such that 
$$
\dfrac{1}{C_\phi} \leq \ds\int_{\mathcal{F}_q} \phi \ d\mathcal{F}_q \leq C_\phi
$$
for all $q \in N$.
\end{lem}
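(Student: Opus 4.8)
The plan is to prove the upper and lower bounds separately, extracting the upper bound almost immediately from Lemma~\ref{lem:appvolbounds} and obtaining the lower bound by a compactness argument on $N$ that promotes the pointwise positivity of $\phi$ on each fiber to a uniform positive lower bound.

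For the upper bound: since $M$ is compact and $\phi$ is continuous, $\norm{\phi}_\infty < \infty$, and by the upper bound in Lemma~\ref{lem:appvolbounds} there is a uniform $\wh{C}_U < \infty$ with $\vol_{m-n}(\mc{F}_q) \leq \wh{C}_U$ for all $q \in N$. Hence $\int_{\mc{F}_q} \phi \, d\mc{F}_q \leq \norm{\phi}_\infty \wh{C}_U$ for every $q$, which gives the upper constant $C_U^\phi := \norm{\phi}_\infty \wh{C}_U$.

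The lower bound is the substantive part, and it is where I expect the (mild) obstacle to lie, namely in arranging that the box charts witnessing the lower bound also see $\phi$ bounded away from zero. For a fixed $q \in N$, the hypothesis gives a point $p_q \in \mc{F}_q = f^{-1}(q) \cap \ucirc{M}$ with $\phi(p_q) > 0$; put $\delta_q := \tfrac12 \phi(p_q)$. Since $\ucirc{M}$ is a boundaryless manifold and $f|_{\ucirc{M}}$ a submersion, $p_q$ admits arbitrarily small nice box charts adapted to $f|_{\ucirc{M}}$; by continuity of $\phi$ I can choose one, $E_q \subset \ucirc{M}$, small enough that $\phi > \delta_q$ on $E_q$. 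Then $f(E_q)$ is an open neighbourhood of $q$ in $N$, so $\{ f(E_q) \}_{q \in N}$ covers the compact space $N$; extract a finite subcover $f(E_{q_1}), \dots, f(E_{q_\ell})$. Each $E_{q_j}$ carries a lower plaque-volume bound $C_L^{E_{q_j}} > 0$ from Lemma~\ref{lem:plaques}. Given any $q' \in N$, pick $j$ with $q' \in f(E_{q_j})$; then $\mc{F}_{q'}$ contains the single fiber plaque $f^{-1}(q') \cap E_{q_j} \subset \ucirc{M}$, on which $\phi > \delta_{q_j}$, so
$$
\int_{\mc{F}_{q'}} \phi \, d\mc{F}_{q'} \ \geq \ \delta_{q_j} \cdot \vol_{m-n}\big(f^{-1}(q') \cap E_{q_j}\big) \ \geq \ \delta_{q_j} C_L^{E_{q_j}} \ \geq \ \min_{1 \le i \le \ell} \delta_{q_i} C_L^{E_{q_i}} \ =: \ C_L^\phi > 0.
$$
Setting $C_\phi := \max\{ C_U^\phi,\ 1/C_L^\phi,\ 1\}$ then yields the claimed two-sided bound with $C_\phi > 1$. (Note the hypothesis on $\phi$ implicitly forces $\mc{F}_q \neq \varnothing$ for every $q$, i.e.\ surjectivity of $f|_{\ucirc{M}}$, so no separate surjectivity assumption is needed here.) Apart from the box-chart selection just described, every step is routine given Lemmas~\ref{lem:plaques} and~\ref{lem:appvolbounds} together with compactness of $N$; the only conceptual point — the same one animating the whole appendix — is that passing to $\ucirc{M}$ is precisely what makes the fibers $\mc{F}_q$ genuine submanifolds and the plaque machinery applicable despite $\bd M \neq \varnothing$.
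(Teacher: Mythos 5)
Your proof is correct and follows essentially the same route as the paper's: the upper bound via $\norm{\phi}_\infty$ times the uniform fiber-volume bound of Lemma~\ref{lem:appvolbounds}, and the lower bound by covering $N$ with images of nice box charts on which $\phi$ is bounded away from zero, extracting a finite subcover, and invoking the plaque-volume lower bound of Lemma~\ref{lem:plaques}. The only (immaterial) difference is that you construct the charts explicitly around chosen points of positivity, whereas the paper quantifies over all nice box charts with $\inf\phi>0$; your parenthetical observation that the positivity hypothesis already forces surjectivity of $f|_{\ucirc{M}}$ is also correct.
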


\begin{proof} Let $C_1 = \ds\sup_{p \in M} \phi < +\infty$. Then for all $q \in N$ we have
$$
\mathcal{I}_\phi(q) := \ds\int_{\mathcal{F}_q} \phi \ d\mathcal{F}_q \leq C_1 \vol(\mathcal{F}_q) \leq C C_1, 
$$
where $C$ is the upper bound on fiber volumes guaranteed by Lemma \ref{lem:appvolbounds}. 

Thus it remains only to show that $\mathcal{I}_\phi(q)$ is uniformly bounded away from $0$. This follows from an easy adaptation of the proof of the fiber volume lower bound. Consider the set $\mathcal{B}$ of all nice box charts $B_i$ for $f|_{\ucirc{M}}$ such that
$$
\ds\inf_{p \in B_i} \phi(p) > 0. 
$$
Since $\phi$ is continuous and positive somewhere on each fiber of $f|_{\ucirc{M}}$, the image of $\mathcal{B}$ under $f$ forms an open cover of $N$. Extract a finite subcover $\{ f(B_1), \dots, f(B_k) \} \subset f(\mathcal{B})$. Then for some $\delta > 0$ we have $\phi(p) > \delta$ for all $p \in B_1 \cup \dots \cup B_k$, and by Lemma \ref{lem:plaques} there exists $C_2 > 0$ such that the volumes of the plaques of each $B_i$ are uniformly bounded below by $C_2$. Since $\{ f(B_i) \}_{i=1}^k$ covers $N$, each $\mathcal{F}_q$ contains at least one full plaque in some $B_i$; thus
$$
\ds\int_{\mathcal{F}_q} \phi \ d\mathcal{F}_q \geq \delta C_2
$$
for all $q \in N$. Setting
$$
C_\phi = \max \left\{ CC_1, \dfrac{1}{\delta C_2} \right\}
$$
we are done. \end{proof}

\section{Appendix B: Normal Jacobians formula} \label{sec:njform}

Recall that our proof of Theorem 1.1 hinges on the existence of uniform bounds for the fiber integrals
$$
\ds\int_{\mathcal{V}_{p,q}} \dfrac{\nj(\pi_1)}{\nj(\pi_2)} \ d\mathcal{V}_{p, q}.
$$
The key point is that these integrals can be identified with fiber integrals of a \emph{universally defined} function $\eta$ on a slightly larger compact manifold with boundary which submerses onto $\gr_k(TM) \times \gr_{n-k}(TM)$. By the results of Appendix A, it then suffices to show that $\eta$ is (i) continuous, and (ii) positive somewhere on the interior of each fiber.

There are two main benefits to this approach. First, by using only what is required to apply Lemma \ref{lem:appfiberbounds}, it avoids obscuring the logic of the argument. Second, and largely as a consequence, it generalizes easily to the case when $\dim(V) + \dim(W) > M$.

The downside is that the constant $C$ remains somewhat opaque. In particular, this method gives no further analysis of the function $\eta$, on which $C$ depends, or equivalently of the function(s)
$$
\dfrac{\nj(\pi_1)}{\nj(\pi_2)} : \mc{V} \xrightarrow[ \ \ \ ]{} \RR_{\geq 0}.
$$
This is unsatisfying, especially given the fairly straightforward geometric intuition motivating the bounds. In addition, the mere existence of some constant is clearly insufficient for applications where more precise control over the constant is needed.

In the complementary dimensional case, a more transparent expression for $\frac{\nj(\pi_1)}{\nj(\pi_2)}$ is available, which in particular allows us to decompose $C$ into more tractable factors:

\begin{prop} \label{prop:njform} For $(\mathcal{H}, \psi)$ satisfying the hypotheses of Theorem \ref{thm:main} and any complementary dimensional submanifolds $V, W \subset M$, we have 
$$
\dfrac{\nj(\pi_1)}{\nj(\pi_2)}(h, p, q) = \dfrac{| \det(J_{(h, T_p V, T_q W)}) |}{\nj(d(ev_p)_h)} 
$$
for all $(h,p,q) \in \mathcal{V}$. Here $J$ is defined as in Claim \ref{claim:tangentspace}, and $ev_p$ as usual denotes the map
$$
ev_p := ev_{\psi}(\cdot,p) : \mathcal{H} \longrightarrow M
$$
given by $ev_p(h) = h(p)$.
\end{prop}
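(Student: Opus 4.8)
The plan is to reduce the identity to a finite-dimensional linear-algebra computation at a fixed point $(h,p,q)\in\mathcal V$ and then evaluate it with Sylvester's determinant identity. Set $E_1=T_h\mathcal H$, $E_2=T_pV\oplus T_qW$, $F=T_qM$, $A:=d(ev_p)_h:E_1\to F$ (surjective by $\Aone$), and $J:=J_{(h,T_pV,T_qW)}:E_2\to F$; note $\dim E_2=\dim F=n$. By Claim \ref{claim:tangentspace}, $L:=T_{(h,p,q)}\mathcal V$ equals $\ker B$, where $B:E_1\oplus E_2\to F$ is the surjective map $B(\dot h,\xi)=A\dot h-J\xi$, and $L$ carries the restriction of the product inner product on $E_1\oplus E_2$ (this is precisely the metric on $\mathcal V$ inherited from $\mathcal H\times M\times M$, cf.\ Lemma \ref{lem:etatonj}). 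Under these identifications, $d(\pi_1)_{(h,p,q)}=\mathrm{pr}_{E_1}|_L$ and $d(\pi_2)_{(h,p,q)}=\mathrm{pr}_{E_2}|_L$ are orthogonal coordinate projections, so the goal is to compute $\nj(\mathrm{pr}_{E_1}|_L)/\nj(\mathrm{pr}_{E_2}|_L)$.

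The first key step is the general fact that, for the orthogonal projection $P:E\to E'$ onto a subspace $E'\subseteq E$ and any subspace $L\subseteq E$, the adjoint of $P|_L:L\to E'$ with respect to the inherited metric on $L$ is $\mathrm{proj}_L|_{E'}$; hence $\nj(P|_L)^2=\det(P\circ\mathrm{proj}_L|_{E'})$ as an operator on $E'$. Feeding in the standard formula $\mathrm{proj}_L=\mathrm{id}-B^{*}(BB^{*})^{-1}B$ for the orthogonal projection onto $\ker B$, and using (with $E_1,E_2$ identified with the summands of $E_1\oplus E_2$) that $B|_{E_1}=A$, $B|_{E_2}=-J$, $\mathrm{pr}_{E_1}\circ B^{*}=A^{*}$, $\mathrm{pr}_{E_2}\circ B^{*}=-J^{*}$, and $Q:=BB^{*}=AA^{*}+JJ^{*}$ (positive definite, as $A$ is surjective), one gets
$$
\nj(\pi_1)_{(h,p,q)}^{2}=\det\!\big(\mathrm{id}_{E_1}-A^{*}Q^{-1}A\big),\qquad \nj(\pi_2)_{(h,p,q)}^{2}=\det\!\big(\mathrm{id}_{E_2}-J^{*}Q^{-1}J\big).
$$

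The second key step is Sylvester's identity $\det(\mathrm{id}-XY)=\det(\mathrm{id}-YX)$ (here with $X,Y$ running between $E_i$ and $F$): it turns the first determinant into $\det(\mathrm{id}_F-Q^{-1}AA^{*})=\det\!\big(Q^{-1}(Q-AA^{*})\big)=\det(JJ^{*})/\det Q$, and the second into $\det(AA^{*})/\det Q$. Dividing, the factor $\det Q$ cancels, and taking square roots—using $\det(JJ^{*})^{1/2}=|\det J|$ since $J$ maps between spaces of the same dimension, and $\det(AA^{*})^{1/2}=\nj(A)$—yields
$$
\frac{\nj(\pi_1)}{\nj(\pi_2)}(h,p,q)=\frac{|\det J_{(h,T_pV,T_qW)}|}{\nj\big(d(ev_p)_h\big)},
$$
as asserted. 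The computation is uniform across the transversal and non-transversal cases: if $J$ is not an isomorphism—equivalently, $h(V)$ meets $W$ non-transversally at $q$, by Proposition \ref{prop:pi1critpts}—then both sides vanish, while $\nj(\pi_2)>0$ always (Claim \ref{claim:eta2pos}), so the ratio remains well defined.

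I expect the only real friction to be the bookkeeping in the middle paragraph: confirming that the metric on $L$ is the restricted product metric (rather than one twisted by $A$ or $J$), verifying the adjoint identity $(P|_L)^{*}=\mathrm{proj}_L|_{E'}$, and tracking which determinants live on $E_1$, $E_2$, or $F$. Once the two $\det(\mathrm{id}_{E_i}-\cdots)$ expressions are in hand, the conclusion is a short application of Sylvester's identity.
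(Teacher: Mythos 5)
Your proposal is correct, and the linear algebra checks out: $B=[A,-J]$ is surjective because $A$ is, so $Q=BB^{*}=AA^{*}+JJ^{*}$ is invertible, the projection formula $\mathrm{proj}_{\ker B}=\mathrm{id}-B^{*}Q^{-1}B$ and the adjoint identity $(P|_L)^{*}=\mathrm{proj}_L|_{E'}$ are both valid, and Sylvester's identity gives exactly $\det(JJ^{*})/\det Q$ and $\det(AA^{*})/\det Q$ for the two squared normal Jacobians. However, your route is genuinely different from the paper's. The paper works only at regular points of $\pi_1$ (where $d\pi_1$ is invertible, since $\dim\mc{V}=\dim\mc{H}$), writes $T_{(h,p,q)}\mc{V}$ as the \emph{graph} of $G:=d\pi_2\circ(d\pi_1)^{-1}$, identifies $G=J^{-1}\circ d(ev_p)_h$ from the commuting triangle in Claim \ref{claim:tangentspace}, and then invokes the graph lemma of \cite{bcss} (Lemma \ref{lem:bcsslem}) to convert $\det(GG^{*})^{-1/2}$ into the ratio $\nj(\pi_1)/\nj(\pi_2)$; the degenerate case is dispatched separately by noting both sides vanish together. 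Your kernel-plus-Sylvester computation buys self-containedness (no appeal to the computation on pp.\ 241--243 of \cite{bcss}), symmetry between $\pi_1$ and $\pi_2$, and a single formula valid at non-transversal points as well, since $Q$ stays positive definite there. The paper's version is shorter modulo the cited lemma and keeps the geometric content---that $\frac{\nj(\pi_1)}{\nj(\pi_2)}$ is the reciprocal normal Jacobian of the ``graph map'' $J^{-1}\circ d(ev_p)_h$---more visible, which is what feeds the angle heuristic at the end of Appendix B.
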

\noindent The purpose of this section is to prove this statement. We begin with

\begin{lem} \label{lem:bcsslem}
Let $V$, $W$ be finite dimensional inner product spaces and suppose $S \subset V \times W$ is a linear subspace such that $\pi_1|_S$ is an isomorphism. Then if 
$$
G := \pi_2 \circ (\pi_1|_S)^{-1} : V \to W,
$$
we have $S = \Gamma(G)$. If in addition $\pi_2|_S$ is surjective and $S$ is endowed with the inner product inherited from the product structure on $V \times W$, then
\begin{equation} 
\det(GG^*)^{-1/2} = \dfrac{|\det(\pi_1|_S)|}{\det((\pi_2|_S) \cdot (\pi_2|_S)^*)^{1/2}}.
\end{equation} 
\end{lem}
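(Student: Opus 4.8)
The plan is to parametrize $S$ by its graph map, express all three normal Jacobians in terms of $G$, and reduce the claimed formula to a single determinant identity.

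First I would record that $S = \Gamma(G)$. Given $s \in S$, set $v = \pi_1(s)$; since $\pi_1|_S$ is injective, $(\pi_1|_S)^{-1}(v) = s$, so $G(v) = \pi_2(s)$ and hence $s = (v, G(v))$. The reverse inclusion is the same computation in reverse. Equivalently, the graph map $\gamma : V \to S$, $v \mapsto (v, G(v))$, is a linear isomorphism satisfying $\pi_1|_S = \gamma^{-1}$ and $\pi_2|_S = G \circ \gamma^{-1}$. Note that this part uses neither the inner product nor surjectivity of $\pi_2|_S$.

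Next I would compute the normal Jacobians under the hypothesis that $S$ carries the inherited inner product. From $\|(v, G v)\|_S^2 = \|v\|_V^2 + \|G v\|_W^2$ one gets $\gamma^* \gamma = \id_V + G^* G$ on $V$. Since $\pi_1|_S = \gamma^{-1}$ is a linear isomorphism between spaces of equal dimension, $|\det(\pi_1|_S)| = \nj(\gamma)^{-1} = \det(\id + G^* G)^{-1/2}$. Using $\pi_2|_S = G \gamma^{-1}$ together with $\gamma^{-1}(\gamma^{-1})^* = (\gamma^*\gamma)^{-1}$, one finds $(\pi_2|_S)(\pi_2|_S)^* = G(\id + G^*G)^{-1} G^*$; note that $\pi_2|_S$ surjective forces $G = (\pi_2|_S)\circ\gamma$ surjective, so $GG^*$ is invertible and all of these expressions are nondegenerate. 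Substituting these into the right-hand side of the asserted formula, it remains to establish the determinant identity
\[
\det\!\big(G(\id + G^*G)^{-1}G^*\big) \;=\; \frac{\det(GG^*)}{\det(\id + G^*G)}.
\]

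This identity is the one genuinely nontrivial point, and the main obstacle. I would prove it by two standard maneuvers: the "push-through" identity $G(\id + G^*G)^{-1} = (\id + GG^*)^{-1}G$ (immediate from $G(\id + G^*G) = (\id + GG^*)G$), which rewrites the left-hand side as $\det\!\big((\id+GG^*)^{-1}GG^*\big) = \det(GG^*)/\det(\id+GG^*)$, followed by Sylvester's determinant theorem $\det(\id + GG^*) = \det(\id + G^*G)$ to reconcile the denominators. (A singular value decomposition of $G$ gives an alternative, fully explicit proof: all four determinants become products over the nonzero singular values $\sigma_i$ of $G$, with both sides equal to $\prod_i \sigma_i^2/(1 + \sigma_i^2)$; the two-lemma route is cleaner.) Combining this with the expressions from the previous step, the right-hand side of the asserted formula squares to $\det(GG^*)^{-1}$, and taking square roots yields the claim.
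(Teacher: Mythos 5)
Your proof is correct and complete. The paper itself does not prove this lemma --- it declares the first statement obvious and cites \cite[pp.\ 241--243]{bcss} for the second --- so your write-up supplies the computation the paper outsources. Every step checks: $\gamma^*\gamma = \id_V + G^*G$, hence $|\det(\pi_1|_S)| = \det(\id + G^*G)^{-1/2}$ and $(\pi_2|_S)(\pi_2|_S)^* = G(\id+G^*G)^{-1}G^*$, and the reduction to $\det\bigl(G(\id+G^*G)^{-1}G^*\bigr) = \det(GG^*)/\det(\id+G^*G)$ is exactly right, with the push-through identity and Sylvester's theorem closing it out. The only comparison worth noting is stylistic: the BCSS computation chooses orthonormal bases adapted to the singular value decomposition of $G$ and verifies the identity entry by entry (essentially your parenthetical alternative, where both sides become $\prod_i \sigma_i^2/(1+\sigma_i^2)$), whereas your primary route is coordinate-free; the latter is arguably cleaner and makes transparent why surjectivity of $\pi_2|_S$ is needed (it is precisely what makes $GG^*$ invertible so that both sides of the identity are nonzero).
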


\begin{proof}
The first statement is obvious. The second follows from some elementary linear algebra computations; see \cite[pp. 241-243]{bcss} for details. 
\end{proof}

\begin{remark} In terms of normal Jacobians, this result specifies a context in which the innocent-looking formula
\begin{equation} \label{eqn:njinv}
\dfrac{1}{\nj(AB^{-1})} = \dfrac{\nj(B)}{\nj(A)}
\end{equation}
holds. Note however that this formula fails in general: normal Jacobians of general (not necessarily invertible) linear maps between inner product spaces are not typically multiplicative under composition. In particular, the fact that (\ref{eqn:njinv}) holds in the setting of Lemma \ref{lem:bcsslem} depends on the assumption that the metric on $V \times W$ (and thus the metric on $S$) is the product metric induced by the metrics on $V$ and $W$.
\end{remark}

\noindent \emph{Proof of Proposition \ref{prop:njform}.} As in the discussion following Claim \ref{claim:tangentspace}, observe that
$$
\nj(\pi_1)(h, p, q) = | \det((d\pi_1)_{(h, p, q)}) | = 0 \ \ \Longleftrightarrow \ \ \det(J_{(h, T_p V, T_q W)}) = 0.
$$
\noindent Thus it suffices to consider the case when both $d\pi_1$ and $J$ are invertible. Let $(h,p,q)$ be a regular point for $\pi_1$. Then we can define
$$
G \ := \ (d\pi_2)_{(h,p,q)} \circ (d\pi_1)_{(h,p,q)}^{-1}  :  T_h \mathcal{H}  \longrightarrow  T_p V \times T_q W.
$$
Since $\Gamma(G) = T_{(h,p,q)}\mathcal{V},$ with metric induced by the product metric on $T_h \mathcal{H} \times T_p V \times T_q W,$ Lemma \ref{lem:bcsslem} applies to give
\begin{equation} \label{eqn:G1}
\det(GG^*)^{-1/2} = \dfrac{\nj(\pi_1)}{\nj(\pi_2)}(h, p, q).
\end{equation}
Moreover, the description of $T_{(h,p,q)} \mathcal{V}$ given in Claim \ref{claim:tangentspace} immediately implies that the following diagram commutes:
\begin{center}
\begin{tikzcd}
& \arrow{ldd}[above]{d\pi_1 \ \ \ } T_{(h,p,q)}\mathcal{V} \arrow[rdd, "d\pi_2"] & \\
& & \\
T_h \mathcal{H} \arrow{rdd}[below]{d(ev_p)_h \ \ \ \ \ } \arrow{rr}[above]{G} & & T_p V \times T_q W \arrow{ldd}[below]{\ \ \ \ \ \ \ \ \ \ \ \ \ J_{(h, T_p V, T_q W)}} \\
& & \\
 & T_q M &
\end{tikzcd}
\end{center}
In particular, 
$$
G = J_{(h, T_p V, T_q W)}^{-1} \circ d(ev_p)_h, 
$$
so by (\ref{eqn:G1}) we compute 
\begin{align} \label{eqn:prop33proof}
\dfrac{\nj(\pi_1)}{\nj(\pi_2)}(h, p, q) &= \det(GG^*)^{-1/2} \nonumber \\
    &= \det((J^{-1} \circ d(ev_p)_h) \cdot (J^{-1} \circ d(ev_p)_h)^*)^{-1/2} \nonumber \\
    &= \det(J^{-1} \cdot d(ev_p)_h \cdot d(ev_p)_h^* \cdot (J^{-1})^*)^{-1/2}  \\
    &= \left[ \det(J^{-1}) \cdot \det(d(ev_p)_h \cdot d(ev_p)_h^*) \cdot \det((J^{-1})^*)  \right]^{-1/2} \nonumber \\
    &= \left[ \det(J^{-1})^2 \cdot \det(d(ev_p)_h) \cdot d(ev_p)_h^*) \right]^{-1/2} \nonumber \\
    &= \dfrac{|\det(J)|}{\det(d(ev_p)_h) \cdot d(ev_p)_h^*)^{1/2}} \nonumber.  \qed
\end{align}

\begin{remark}
   Up to a uniformly bounded factor, we have
   $$
   |\det \left( J_{(h, \sigma_p, \sigma_q)} \right) | \approx \sin \measuredangle(dh(\sigma_p), \sigma_q).
   $$
\end{remark}
 \noindent The multiplicative error involved is at most $\eta^k$, where $\eta$ is the supremum of $\max \left\{ \norm{dh_q}, \norm{dh_q^{-1}} \right\}$ over all $h \in \mc{H}$ and $q \in M$.
 
Similarly, the fact that $ev$ is a $C^1$ submersion implies that $\det(d(ev_p)_h) \cdot d(ev_p)_h^*)^{1/2}$ is uniformly bounded above and away from $0$, i.e. constant up to a uniform factor. 

Combining both errors we conclude that 
$$
\dfrac{\nj(\pi_1)}{\nj(\pi_2)}(h, p, q) \approx \sin \measuredangle(dh(\sigma_p), \sigma_q)
$$
up to a bounded multiplicative constant (which is itself a function of the constants bounding $\nj(ev_p)_h, \norm{dh_p}$, and $\norm{dh_p^{-1}}$ for all $(h, p) \in \mc{H} \times M$).  

This means that one can view Theorem \ref{thm:main} as reducing to the following claim: 

\begin{prop} \label{prop:anglebound}
    For all $0 \leq k \leq n$ there is a constant $C_k > 1$ such that for all $(\sigma_p, \sigma_q) \in \gr_k(TM) \times \gr_{n-k}(TM)$, 
    $$
    \dfrac{1}{C_k} \leq \ds\int_{h \in \mc{H}_{p,q}} \sin \measuredangle(dh(\sigma_p), \sigma_q) \ |d\mc{H}_{p,q}| 
    \leq C_k.
    $$
\end{prop}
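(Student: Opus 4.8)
The plan is to derive Proposition~\ref{prop:anglebound} from the normal Jacobians formula of Proposition~\ref{prop:njform} together with the fiber estimate already established in Corollary~\ref{cor:fibbounds}. The first step is to note that Proposition~\ref{prop:njform} has a ``universal'' counterpart living on the bundle $\mc{G}_k$: for any $(h,\sigma_p,\sigma_q)\in\mc{G}_k$, choosing submanifolds $V\ni p$ and $W\ni q$ with $T_pV=\sigma_p$, $T_qW=\sigma_q$ and combining Lemma~\ref{lem:etatonj} with Proposition~\ref{prop:njform} gives
$$
\eta(h,\sigma_p,\sigma_q)\;=\;\frac{\nj(\pi_1)}{\nj(\pi_2)}(h,p,q)\;=\;\frac{\bigl|\det J_{(h,\sigma_p,\sigma_q)}\bigr|}{\nj\bigl(d(ev_p)_h\bigr)},
$$
where $\eta=\eta_1/\eta_2$ and $J$ is as in Claim~\ref{claim:tangentspace}; the right-hand side depends only on $(h,\sigma_p,\sigma_q)$. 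So it suffices to compare this function with $\sin\measuredangle(dh(\sigma_p),\sigma_q)$ on $\mc{G}_k$, up to a multiplicative constant depending only on $(\mc{H},\psi)$.

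For the denominator, since $ev:\mc{H}\times M\to M\times M$ is a $C^1$ submersion from a compact manifold, $(h,p)\mapsto\nj(d(ev_p)_h)$ is continuous and strictly positive on $\mc{H}\times M$, hence pinched between two positive constants. For the numerator I would use the computation already sketched in the remarks following Proposition~\ref{prop:njform}: factoring $J_{(h,\sigma_p,\sigma_q)}=\widetilde J\circ\bigl(dh|_{\sigma_p}\oplus\id_{\sigma_q}\bigr)$ with $\widetilde J:dh(\sigma_p)\times\sigma_q\to T_qM$, $(x,w)\mapsto w-x$, and adopting the convention $\sin\measuredangle(\tau,\sigma):=|\det\widetilde J|$ (which vanishes precisely when $\tau+\sigma\neq T_qM$), multiplicativity of determinants over this composition of maps between $n$-dimensional spaces yields
$$
\bigl|\det J_{(h,\sigma_p,\sigma_q)}\bigr|\;=\;\bigl|\det\bigl(dh|_{\sigma_p}\bigr)\bigr|\cdot\sin\measuredangle\bigl(dh(\sigma_p),\sigma_q\bigr).
$$
Every singular value of $dh|_{\sigma_p}$ lies in $[\Lambda^{-1},\Lambda]$ for $\Lambda:=\sup\{\max(\|dh_p\|,\|dh_p^{-1}\|):(h,p)\in\mc{H}\times M\}$, which is finite because $(h,p)\mapsto dh_p$ is continuous ($ev_\psi$ being $C^1$) and $\mc{H}\times M$ is compact; hence $|\det(dh|_{\sigma_p})|\in[\Lambda^{-k},\Lambda^k]$. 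Putting the three displays together, $\eta$ and $\sin\measuredangle(dh(\cdot),\cdot)$ agree on $\mc{G}_k$ up to a multiplicative factor depending only on $(\mc{H},\psi)$.

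Finally, the fiber $\wh{\mc{F}}_{\sigma_p,\sigma_q}$ is canonically isometric to $\mc{H}_{p,q}:=\{h\in\ucirc{\mc{H}}:h(p)=q\}$ (equivalently $\mc{V}_{p,q}$) with its induced metric, both being literally this submanifold of $\mc{H}$, as in the proof of Lemma~\ref{lem:isomfib}. Integrating the pointwise comparison over this fiber and invoking Corollary~\ref{cor:fibbounds}, which bounds $\int_{\wh{\mc{F}}_{\sigma_p,\sigma_q}}\eta$ above and away from zero uniformly, produces constants $0<c\le c'$ (depending only on $(\mc{H},\psi)$) with
$$
\frac{c}{C_k}\;\le\;\int_{h\in\mc{H}_{p,q}}\sin\measuredangle\bigl(dh(\sigma_p),\sigma_q\bigr)\;|d\mc{H}_{p,q}|\;\le\;c'\,C_k,
$$
and enlarging $C_k$ to absorb $c,c'$ finishes the proof. (Alternatively one could bypass Corollary~\ref{cor:fibbounds} entirely and apply Proposition~\ref{prop:boundsprop} directly to $\phi(h,\sigma_p,\sigma_q)=\sin\measuredangle(dh(\sigma_p),\sigma_q)$, which is continuous on $\mc{G}_k$ and, by the argument of Claim~\ref{claim:eta1pos} together with $\Atwo$, positive somewhere on every fiber $\wh{\mc{F}}_{\sigma_p,\sigma_q}$.) The one point requiring care is the bookkeeping in the angle comparison --- fixing a convention for $\measuredangle$ so that $|\det\widetilde J|=\sin\measuredangle$ holds identically --- but this is elementary linear algebra, and everything else follows immediately from results already in hand.
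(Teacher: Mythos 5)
Your proposal is correct and follows essentially the same route the paper takes: the paper's own justification of Proposition \ref{prop:anglebound} is precisely the chain of remarks after Proposition \ref{prop:njform} (the comparison $|\det J|\approx\sin\measuredangle$ up to a factor $\Lambda^{\pm k}$, the uniform pinching of $\nj(d(ev_p)_h)$, and then Corollary \ref{cor:fibbounds} via the isometry $\wh{\mc{F}}_{\sigma_p,\sigma_q}\cong\mc{H}_{p,q}$), which you have simply written out in full. Your parenthetical alternative of applying Proposition \ref{prop:boundsprop} directly to $\phi=\sin\measuredangle(dh(\sigma_p),\sigma_q)$ is also valid and matches the spirit of the paper's Appendix A machinery.
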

    \noindent Here $\mc{H}_{p,q} = \{ h \in \ucirc{\mc{H}} : h(p) = q \}$; this is a submanifold of $\ucirc{\mc{H}}$, with Riemannian metric induced from $\mc{H}$, which can be canonically (and isometrically) identified with $\mc{V}_{p,q}$ (indeed $\mc{V}_{p,q} = \mc{H}_{p,q} \times \{ p \} \times \{ q \}$). 
    
  Using the results above, our original constant $C$ can be estimated as a function of the constants $C_k$ and the bounds on $\nj(ev_p)_h, \norm{dh_p}$, and $\norm{dh_p^{-1}}$.

\bibliographystyle{alpha}
\bibliography{main.bbl}

\begin{thebibliography}{ACSS22}

\bibitem[ACSS22]{armentano2022random}
D.~Armentano, G.~Chinta, S.~Sahi, and M.~Shub.
\newblock Random and mean lyapunov exponents for $\mathrm{GL}_n(\mathbb{R})$, 2022.

\bibitem[BC13]{condition}
P.~B\"{u}rgisser and F.~Cucker.
\newblock {\em Condition}.
\newblock Springer Berlin Heidelberg, 2013.

\bibitem[BCSS98]{bcss}
L.~Blum, F.~Cucker, M.~Shub, and S.~Smale.
\newblock {\em Complexity and Real Computation}.
\newblock Springer New York, 1998.

\bibitem[Cal20]{calegari}
D.~Calegari.
\newblock On the kinematic formula in the lives of the saints.
\newblock {\em Notices Amer. Math. Soc.}, 67(07):1042--1044, 2020.

\bibitem[Che66]{chern}
S.-S. Chern.
\newblock On the kinematic formula in integral geometry.
\newblock {\em Journal of Mathematics and Mechanics}, 16(1):101--118, 1966.

\bibitem[CP23]{correa_pujals_2023}
J.~Correa and E.~R. Pujals.
\newblock Orders of growth and generalized entropy.
\newblock {\em Journal of the Institute of Mathematics of Jussieu}, 22(4):1581–1613, 2023.

\bibitem[EE69]{e-eells}
C.~J. Earle and J.~Eells.
\newblock A fibre bundle description of {T}eichm{\"{u}}ller theory.
\newblock {\em Journal of Differential Geometry}, 3(1–2), 1969.

\bibitem[Fed69]{fed}
H.~Federer.
\newblock {\em Geometric measure theory}.
\newblock Springer, 1969.

\bibitem[Fu16]{fu}
J.~H.~G. Fu.
\newblock Intersection theory and the {A}lesker product.
\newblock {\em Indiana University Mathematics Journal}, 65(4):1347--1371, 2016.

\bibitem[Gab01]{gabai}
D.~Gabai.
\newblock {The Smale Conjecture for Hyperbolic 3-Manifolds: $\operatorname{Isom}(M^3) \simeq \operatorname{Diff}(M^3)$}.
\newblock {\em Journal of Differential Geometry}, 58(1):113 -- 149, 2001.

\bibitem[How93]{howard}
R.~Howard.
\newblock The kinematic formula in {R}iemannian homogeneous spaces.
\newblock {\em Mem. Amer. Math. Soc.}, 106(509), 1993.

\bibitem[Lee13]{lee}
J.~M. Lee.
\newblock {\em Introduction to smooth manifolds}.
\newblock Springer, 2013.

\bibitem[Nic11]{nic}
L.~Nicolaescu.
\newblock The co-area formula.
\newblock \url{https://www3.nd.edu/~lnicolae/Coarea.pdf}, 2011.

\bibitem[San04]{santalo}
L.~A. Santaló.
\newblock {\em Integral Geometry and Geometric Probability}.
\newblock Cambridge Mathematical Library. Cambridge University Press, 2nd edition, 2004.

\bibitem[Sim14]{sim}
L.~Simon.
\newblock Introduction to geometric measure theory.
\newblock \url{https://web.stanford.edu/class/math285/ts-gmt.pdf}, 2014.

\bibitem[SS94]{shubsmale5}
M.~Shub and S.~Smale.
\newblock Complexity of {B}ezout's theorem {V}: {P}olynomial time.
\newblock {\em Theoretical Computer Science}, 133(1):141--164, 1994.

\end{thebibliography}

\end{document}